\documentclass[11pt,reqno]{amsart}
\usepackage[T1]{fontenc}
\usepackage{lmodern,amsmath,amssymb,amsthm,mathtools,microtype,booktabs,needspace}
\usepackage[margin=1.03in]{geometry}
\usepackage[hidelinks]{hyperref}
\hypersetup{pdftitle={Davenport constants with growing binary rank},
pdfauthor={Chao Liu}}
\allowdisplaybreaks[2]
\newtheorem{theorem}{Theorem}[section]
\newtheorem{proposition}[theorem]{Proposition}
\newtheorem{lemma}[theorem]{Lemma}
\newtheorem{corollary}[theorem]{Corollary}
\theoremstyle{definition}
\newtheorem{definition}[theorem]{Definition}
\newtheorem{example}[theorem]{Example}
\theoremstyle{remark}
\newtheorem{remark}[theorem]{Remark}
\newcommand{\F}{\mathbb F}
\newcommand{\Z}{\mathbb Z}
\newcommand{\D}{\mathsf D}
\newcommand{\Ds}{\mathsf D^*}
\newcommand{\dd}{\mathsf d}
\newcommand{\supp}{\operatorname{supp}}
\newcommand{\Sp}{\operatorname{sp}}
\newcommand{\wt}{\operatorname{wt}}
\newcommand{\ord}{\operatorname{ord}}
\newcommand{\kap}{\kappa}
\newcommand{\rk}{\operatorname{rank}}
\newcommand{\Id}{\mathbf 1}
\newcommand{\Vb}[2]{V_{#1}(#2)}
\newcommand{\ceil}[1]{\left\lceil#1\right\rceil}
\newcommand{\floor}[1]{\left\lfloor#1\right\rfloor}
\newcommand{\ind}{\mathbf 1}
\title[Davenport constants with growing binary rank]
{Davenport constants with growing binary rank}
\author{Chao Liu}
\address{Department of Mathematics, University of Mississippi}
\email{chaoliuac@gmail.com}
\date{Working draft, revised October 28, 2026}
\subjclass[2020]{11B75, 20K01, 94B05, 68Q25}
\keywords{Davenport constant, zero-sum sequence, binary linear code,
Fourier sparsity, modular constraint, group program}
\begin{document}
\begin{abstract}
We study the correction to the standard lower bound $\Ds$ for
Davenport constants of finite abelian groups with a growing elementary
two-component. For every fixed nontrivial abelian $p$-group $H$ of
odd order, we prove
$\D(H\oplus C_2^r)=\Ds(H\oplus C_2^r)
+(\D(H)-1)\log_2r+O_H(1)$, with a uniform version for varying $H$.
For $C_3\oplus C_2^r$, we determine exact values on explicit infinite
intervals of ranks. More generally, the correction is logarithmic or
smaller precisely when the fixed group has at most one odd prime divisor.
We show that the two-primary part affects the logarithmic coefficient:
for $C_{24}\oplus C_2^r$, the ratio $(\D-\Ds)/\log_2r$ has limsup
at least $2.4933231\ldots>2$, whereas for $C_3\oplus C_2^r$ the
limit is $2$. These upper limiting coefficients are unbounded even
with the odd prime fixed. The proofs use binary codes and finite-field
Fourier analysis.
\end{abstract}
\maketitle

\section{Introduction}\label{sec:introduction}
For a finite abelian group $G$, the Davenport constant $\D(G)$ is the
least positive integer $N$ such that every sequence of $N$ elements of
$G$ contains a nonempty zero-sum subsequence. Repeated elements are
allowed. If
\[
 G\cong C_{n_1}\oplus\cdots\oplus C_{n_s},
 \qquad 1<n_1\mid\cdots\mid n_s,
\]
the standard construction gives
\[
 \Ds(G):=1+\sum_{i=1}^s(n_i-1)\le\D(G).
\]
Equality holds for finite abelian $p$-groups by Olson's
theorem~\cite{OlsonI}. We study $H\oplus C_2^r$, with particular
attention to exact values, sharp logarithmic corrections, and the
effect of the fixed group $H$ on those corrections.

\subsection{Sharp logarithmic corrections and exact values}
We first determine the logarithmic correction for every fixed
odd-primary label group.

\begin{theorem}[Fixed odd $p$-groups]\label{thm:dichotomy}
Let $p$ be an odd prime and $H$ a fixed nontrivial finite abelian
$p$-group. Then
\begin{equation}\label{eq:fixedH}
 \D(H\oplus C_2^r)
 =\Ds(H\oplus C_2^r)+(\D(H)-1)\log_2 r+O_H(1).
\end{equation}
\end{theorem}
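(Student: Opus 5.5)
Write $G=H\oplus C_2^r$ and $d=\D(H)-1=\Ds(H)-1$, the latter by Olson's theorem. Assuming $r$ is large compared with $|H|$, we have $\Ds(G)=1+d+r$. The proof has two parts: a lower construction of length $\Ds(G)+d\log_2 r-O_H(1)$ with no nonempty zero-sum subsequence, and a matching upper bound.

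\emph{Lower bound.} Fix a zero-sum free sequence $h_1,\dots,h_d$ in $H$ of maximal length; it exists because $\D(H)=d+1$. The first idea is to pair $h_i$ with binary vectors $v_i$, but then a zero-sum subsequence only requires a subset $I$ with $\sum_{i\in I}h_i=0$. By zero-sum freeness of the $h_i$ that forces $I=\emptyset$, so this alone gains nothing over $\Ds$. The gain has to come from repetitions. The plan is to take the $m$-fold structure: choose generators $g_j$ of a cyclic factor and use many copies of $(g,v)$ with different binary labels $v$. A sum $\sum c_j g$ then vanishes exactly when the multiplicity $\sum c_j\equiv 0\pmod p$ (or mod the exponent), while the binary sum is $\sum_{j} v_j$ over the chosen subset. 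So we need a binary code $C\subseteq\F_2^r$ whose columns $v_1,\dots,v_N$ have no nonempty subset $S$ with $|S|\equiv0\pmod n$ and $\sum_{S}v_j=0$. This is a binary linear code, namely the kernel of the matrix $[v_j]$, with no nonzero codeword of weight divisible by $n$.

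Taking the kernel of dimension $k=N-r$ and requiring every nonzero codeword weight to be nonzero mod $n$ caps $k$ at about $(n-1)\log_2 r$. To achieve that, take a parity-check matrix with $r$ rows and $N=r+(n-1)\log_2 r$ columns, built from a BCH-like or concatenated design. In it the dual-weight constraints come from Fourier analysis over $\F_2^r$: the count $\#\{x\in C:\wt(x)\equiv0 \bmod n\}$ is expressed by characters $\omega^{\wt(x)}$, and we force these sums to equal $1$. For a general $H=\bigoplus C_{p^{a_i}}$, concatenate the constructions on each cyclic factor. This uses disjoint blocks of binary coordinates, costing only $O(\log r)$ extra binary rank each, and gives $\sum_i(p^{a_i}-1)\log_2 r=d\log_2 r$ up to $O_H(1)$.

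\emph{Upper bound.} Let $S$ be a sequence in $G$ of length $\Ds(G)+d\log_2 r+K$. Project onto $C_2^r$. Any subsequence of length exceeding $r$ contains a nonempty subsequence whose binary part sums to zero. Iterating gives many disjoint binary-zero-sum blocks, and their $H$-parts form a sequence in $H$; once there are $\D(H)$ such blocks we obtain a zero sum. A greedy extraction gives only about $d$ blocks from $r+d\cdot r$ elements, which is far too weak. Instead we use the linear-algebra count: the kernel $K_S\subseteq\F_2^{N}$ of the binary projection has dimension at least $N-r=d+1+d\log_2 r+K$. Consider the map $K_S\to H$, $x\mapsto\sum x_ih_i$, evaluated on $0/1$ vectors; it is not linear over $\F_2$. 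The key step is to show that a subspace of $\F_2^N$ of dimension $\ge d\log_2 N+O_H(1)$ contains a nonzero $0/1$ vector $x$ with $\sum x_ih_i=0$ in $H$. The plan is a Fourier, or polynomial-method, argument. Write the indicator of $\sum x_ih_i=0$ as $|H|^{-1}\sum_{\chi}\prod_i\chi(h_i)^{x_i}$ and use Olson's group-ring argument over $\F_p[H]$: the element $\prod_{i\in\supp x}(1-[h_i])$ vanishes once the support is large. Combine this with a Chevalley–Warning/Kneser-type statement in which a degree-$d$ condition on a subspace of dimension greater than $d\log_2$ (support size) forces a solution. This should mirror the Alon–Dubiner-style lemma that a binary code of dimension $>(n-1)\log_2 N$ has a nonzero word of weight divisible by $n=p^a$, via Lucas' theorem on $\binom{\wt}{p^j}\bmod p$.

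The main obstacle is this last counting lemma, with constant exactly $d$ and not merely $O(d)$. I would first prove it for $H=C_{p^a}$ using Lucas/Olson polynomials of degree $p^a-1$ in the coordinates together with the bound $|C|\le\sum_{j<\deg}\binom{N}{j}$ evaluated against $2^k$. I would then extend it to general $p$-groups through the group-ring identity defining $\D(H)$, and absorb the $\log_2 N$ versus $\log_2 r$ discrepancy into $O_H(1)$.
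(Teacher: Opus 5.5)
\textbf{Upper bound.} Your upper bound is essentially the paper's argument (the direct proof of Corollary~\ref{cor:sphere}), but you misplace the difficulty. By \eqref{eq:abelian-kappa}, $I^{\D(H)}=0$ in $\F_p[H]$ for every abelian $p$-group. So the coefficient of $[0]$ in $\prod_i(1+x_i([h_i]-1))$ is a polynomial of degree at most $d$ for general $H$ at once, and no cyclic/Lucas step or later ``extension'' is needed.

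The counting step you call the main obstacle is then immediate. Parametrize the $k$-dimensional kernel by $z\in\F_2^k$, with $x_i=(1-\chi_{t_i}(z))/2$. On this kernel the zero indicator is $\delta_0$, and all its Fourier coefficients equal $2^{-k}\ne0$ in $\F_p$. The truncated expansion, however, has Fourier support among sums of at most $d$ of the $t_i$. Hence $2^k\le\Vb{d}{N}$, that is, $k\le d\log_2N+O_H(1)$ with the exact constant $d$.

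\textbf{Lower bound: the genuine gap.} The whole content of the lower bound is a binary code with at most $r$ parity checks, dimension $(n-1)\log_2r-O_n(1)$, and no nonzero word of weight divisible by $n$. You never produce one: ``BCH-like design'' and ``force these character sums to equal $1$'' restate the requirement rather than meet it.

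BCH-type codes are also the wrong model. The dual of the double-error-correcting BCH code of length $2^m-1$ ($m\ge3$ odd) has dimension $2m$ and weights $2^{(m-1)/2}(2^{(m-1)/2}\pm1)$, one of which is divisible by $3$. Indeed, your own bound $2^k\le\Vb{2}{N}$ already forbids such rates when $n=3$.

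The missing idea is a constant-weight code whose weight is prime to $n$: the simplex code $\mathcal S_m$, of length $2^m-1$ and dimension $m$, in which every nonzero word has weight $2^{m-1}$. A direct sum of $n-1$ copies has:
\begin{itemize}
\item nonzero weights $2^{m-1}j$ with $1\le j\le n-1$, none divisible by the odd number $n$;
\item redundancy $(n-1)(2^m-1-m)$.
\end{itemize}
Choosing $m=\lfloor\log_2(1+r/(n-1))\rfloor$ gives the required dimension.

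The paper's Proposition~\ref{prop:simplex} does this for general $H$ in one step. With $h_1,\dots,h_d$ zero-sum free, label every position of the $j$th block by $2^{-(m-1)}h_j$, so that each nonzero block contributes exactly $h_j$. Your factor-by-factor concatenation also works once such blocks exist. Note, though, that each factor uses a constant fraction of $r$, not $O(\log r)$ extra rank; this costs only $O_H(1)$ inside the logarithm.

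\textbf{Minor slip.} $\Ds(H\oplus C_2^r)=r+2d+1$, not $r+d+1$, since each $C_{p^e}$ merges with a $C_2$ into $C_{2p^e}$. This is absorbed in $O_H(1)$.
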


Theorem~\ref{thm:uniform} makes the estimate uniform in $H$. If
$b=\D(H)-1$ and $r\ge16b$, its remainder relative to
$\Ds(H\oplus C_2^r)+b\log_2(r/b)$ lies in $[-3b,0)$.
This also handles varying groups with $b=o(r)$.

For $C_3\oplus C_2^r$, retaining signed Fourier information yields
exact values on explicit infinite intervals of ranks. For $r\ge1$,
the standard lower bound for this group is $r+5$.

\Needspace{14\baselineskip}
\begin{theorem}[Infinite exact intervals]\label{thm:windows}
For each integer $m\ge2$, put
\[
 a_m=2^{m+1}-2m-2,\qquad
 z_m=\floor{\frac{2^{m+2}}{\sqrt3}}-2m-2.
\]
For every integer $a_m\le r\le z_m$,
\begin{equation}\label{eq:windows}
 \D(C_3\oplus C_2^r)=\Ds(C_3\oplus C_2^r)+2m-4=r+2m+1.
\end{equation}
In particular, for every $m\ge1$,
\begin{equation}\label{eq:special-ranks}
 \D(C_3\oplus C_2^{\,2^{m+1}-2m-2})=2^{m+1}-1.
\end{equation}
The case $m=1$ in the last identity is simply
$\D(C_3)=\Ds(C_3)=3$.
\end{theorem}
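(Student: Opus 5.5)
The plan is to recast zero-sum free sequences over $C_3\oplus C_2^r$ as labelled binary codes. Take a sequence $(a_1,v_1),\dots,(a_n,v_n)$ with $a_i\in\Z/3$ and $v_i\in\F_2^r$. A subsequence indexed by $S\subseteq[n]$ is zero-sum exactly when its indicator vector lies in the kernel $C$ of the map $\F_2^n\to\F_2^r$, $e_i\mapsto v_i$, and in addition $L(c):=\sum_i a_ic_i\equiv0\pmod 3$ (with $c_i\in\{0,1\}$ viewed as integers). Here $\dim C=k\ge n-r$. Conversely, every binary $[n,k]$ code with labels $a_i$ is realised by some sequence with $r=n-k$. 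Hence $\D(C_3\oplus C_2^r)-1$ is the largest $n$ for which some $[n,n-r]$ code carries labels with $L(c)\not\equiv0$ for every nonzero codeword $c$.

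Monotonicity helps twice. First, $\D(C_3\oplus C_2^{r+1})\ge\D(C_3\oplus C_2^r)+1$: append $(0,e_{r+1})$ to an extremal sequence. Second, \eqref{eq:windows} on the window $[a_m,z_m]$ then reduces to two endpoint statements:
\begin{itemize}
\item a lower bound $\D\ge r+2m+1$ at $r=a_m$;
\item an upper bound $\D\le r+2m+1$ at $r=z_m$.
\end{itemize}
The first propagates upward and the second propagates downward. The identity \eqref{eq:special-ranks} is the left endpoint, and the case $m=1$ is immediate.

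For the lower bound we need a code of dimension $2m$ and length $n=2^{m+1}-2$ with all nonzero labelled weights nonzero mod $3$. I would look for it as a pair of punctured simplex codes of dimension $m$ (each of length $2^m-1$) glued into a $2m$-dimensional code, for example $\{(xG_1+yG_2,\;xG_2)\}$ or the concatenation of $\F_4$-simplex coordinates. The labels are chosen so that $\#\{\text{label }1\}-\#\{\text{label }2\}$ restricted to each support is $\pm1$ or $\pm2\pmod 3$. The natural source of such labels is the trace/norm structure on $\F_4^m$, since $\F_4^\times\cong C_3$. The check is then a finite character-sum computation.

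For the upper bound, let $\omega=e^{2\pi i/3}$. Suppose $k\ge 2m+1$ and $n=k+r\le z_m+2m+1$. The number of codewords with $L\equiv0$ is
\[
\tfrac13\bigl(2^k+2\operatorname{Re}\Sigma\bigr),\qquad \Sigma=\sum_{c\in C}\omega^{L(c)}.
\]
Zero-sum freeness forces this count to equal $1$, so $\operatorname{Re}\Sigma=(3-2^k)/2$. To bound $\Sigma$, write $\omega^{ac}=\alpha_a+\beta_a(-1)^c$ with $|\alpha_a|=\tfrac12$ and $|\beta_a|=\tfrac{\sqrt3}{2}$ for $a\neq0$. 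Expanding the product over coordinates expresses $\Sigma$ as $2^k$ times a sum over dual codewords $d\in C^\perp$ supported on labelled coordinates, each weighted by $\prod\alpha\prod\beta$. The constraint $|\Sigma|\gtrsim 2^{k-1}$ then becomes a lower bound on this signed Fourier mass. Retaining the signs, rather than only absolute values, is exactly where the threshold $2^{m+2}/\sqrt3$ should come from. Combined with Parseval, this bounds the number of labelled coordinates from below by roughly $2^{(k+1)/2}/\sqrt3$, giving $n\ge 2^{m+2}/\sqrt3$ once $k=2m+1$ and contradicting $n\le z_m+2m+1$.

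The main obstacle is this last step. One has to control the signed sum precisely enough to obtain the exact constant $\sqrt3$ and the floor in $z_m$. I would first handle the case where all labels are equal to $1$ (after twisting by automorphisms), then reduce the general case to it.
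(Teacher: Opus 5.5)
Your code reformulation and the monotonicity reduction to the two endpoints are correct. The upper bound, however, which carries the content of the theorem, has a genuine gap, and the complex-character mechanism you propose cannot close it.

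\emph{Why the proposed route fails.} The identity $\operatorname{Re}\Sigma=(3-2^k)/2$ is only a restatement of zero-sum freeness. To contradict it you must show that $2^{-k}\Sigma=\sum_{d\in C^\perp}\prod\alpha\prod\beta$ is small whenever $n$ is small.
\begin{itemize}
\item Bounding the terms in absolute value (say with all labels nonzero) gives only $|2^{-k}\Sigma|\le 2^{n-k}(\sqrt3/2)^n=2^{-k}3^{n/2}$. This yields a contradiction only for $n\lesssim 2k/\log_2 3$, a linear bound, whereas you need $n\gtrsim 2^{k/2}$.
\item In the relevant range the unsigned mass is enormous, and nothing in your sketch produces the required cancellation.
\item Parseval holds for every unimodular function, so it cannot see the condition $L(c)\not\equiv0\pmod 3$.
\item Your stated target is also insufficient. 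At $k=2m+1$, $2^{(k+1)/2}/\sqrt3=2^{m+1}/\sqrt3$ is half of the required $2^{m+2}/\sqrt3$.
\item The reduction to all labels equal to $1$ is unavailable, because the only nontrivial automorphism of $C_3$ negates all labels at once.
\end{itemize}

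\emph{The missing idea.} Do the Fourier analysis in characteristic $3$ rather than over $\mathbb C$; the obstruction is $3$-adic, not archimedean.
\begin{enumerate}
\item Restrict to a $(2m+1)$-dimensional subspace of the selection kernel, parametrized by $x\in\F_2^d$ with $d=2m+1$. View the label sum $f=\sum_i a_i\ell_i$ as an $\F_3$-valued function.
\item Since $\ell_i=(1-\chi_{t_i})/2$ in $\F_3$, its Fourier support $S$ satisfies $|S|\le n+1$. Its nonzero coefficients are $\pm1$; let $P$ and $Q$ count the two signs.
\item Zero-sum freeness says $f(x)=\pm1$ for $x\ne0$, so $f^2=1-\delta_0$ in $\F_3$. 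Every nonconstant coefficient of $1-\delta_0$ equals $-2^{-d}\ne0$.
\item Hence every nonzero direction is a sum of two elements of $S$, giving $\binom{|S|}2\ge 2^d-1$. This unsigned bound already beats your target.
\item Because $d$ is odd, the signed pair sum in each direction must be $-1$. So a direction represented by a single pair needs a pair of opposite signs.
\item Counting gives $2(2^d-1)\le\binom{|S|}2+PQ\le(3|S|^2-2|S|)/4$. This forces $|S|>(4/\sqrt3)2^m$, contradicting $|S|\le n+1\le\lfloor 2^{m+2}/\sqrt3\rfloor$ (Lemma~\ref{lem:signed-pairs}, Corollary~\ref{cor:odd-support}).
\end{enumerate}
The constant $\sqrt3$ comes from this $3|S|^2$, not from $|1-\omega|$.

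\emph{The lower bound.} Your lower bound is only a search plan, but it completes easily without gluing or $\F_4$ trace labels. Take the direct sum of two $m$-dimensional simplex codes with every label equal to $1$. It has length $2^{m+1}-2$ and parity-check rank $a_m$. Every nonzero codeword has label sum $2^{m-1}$ or $2^m$, neither divisible by $3$. This is the paper's construction (Proposition~\ref{prop:simplex} with $b=2$).
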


The interval lengths are asymptotic to $(4/\sqrt3-2)2^m$ and hence
are proportional to their starting ranks. Section~\ref{sec:C3} gives
integer refinements of both endpoints. The signed Fourier identities
used for the upper bound retain information beyond a sphere-volume
estimate.

Theorem~\ref{thm:exact-transfer} gives a complementary exact identity,
expressing $\D(H\oplus C_2^r)$ through the minimum Fourier support
of an $H$-valued function with a unique zero, for every odd-order
abelian $H$. Appendix~\ref{sec:large-rank} localizes the remaining
$C_3$ extremal parameters to two adjacent dimensions for all
sufficiently large ranks; those two thresholds remain unevaluated.

\subsection{Mixed-characteristic fibers}
A zero-sum-free sequence gives a binary selection code: its binary
projection must vanish, and the remaining $H$-valued sum has a
unique zero on that kernel. When $H$ has a two-primary part, the
two-primary and odd-primary conditions have different natural
representations. The former has bounded algebraic degree over
$\F_2$; the latter has bounded Fourier sparsity in odd characteristic.
The following theorem controls their intersection directly.

\begin{theorem}[Mixed-characteristic intersection]\label{thm:main-intersection}
Let $\mathbb K$ be any field of characteristic different from two.
Suppose $a:\F_2^d\to\F_2$ has algebraic degree at most
$D\in\Z_{\ge0}$, and $b:\F_2^d\to\mathbb K$ has Fourier sparsity
at most $S$, where $S\ge1$. If
\[
 Z=\{x\in\F_2^d:a(x)=1,\ b(x)\ne0\}
\]
is nonempty, then
\begin{equation}\label{eq:main-density}
 |Z|\ge 2^{d-D}S^{-2^D}.
\end{equation}
If $Z$ is a singleton, then
\begin{equation}\label{eq:main-isolation}
 d\le D+2^D\log_2 S.
\end{equation}
Both conclusions remain valid on every binary affine restriction.
\end{theorem}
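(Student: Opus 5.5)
The plan is to prove \eqref{eq:main-density} by a double induction on $(D,d)$. The base case is a field-independent uncertainty principle, and the inductive step doubles the exponent of $S$ by squaring Fourier sparsity. The isolation bound \eqref{eq:main-isolation} then follows at once: if $|Z|=1$, then $2^{d-D}S^{-2^D}\le1$, which is $d\le D+2^D\log_2 S$. For the final sentence of the statement, note that the restriction of $a$ to a binary affine subspace, after an affine change of coordinates, still has degree at most $D$. Likewise a restriction of $b$ is still a $\mathbb K$-combination of at most $S$ characters (frequencies are projected and may merge). So the hypotheses reproduce themselves and the affine version needs no separate argument.

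\textbf{Base case $D=0$.} Here $a\equiv1$, and the claim is that a nonzero $b$ of sparsity $S$ satisfies $|\supp b|\ge 2^d/S$ over any $\mathbb K$ with $\operatorname{char}\mathbb K\ne2$. I would prove this by induction on $d$. If all frequencies coincide, then $b$ is a nonzero multiple of a character, has full support, and we are done. Otherwise pick a coordinate $i$ on which two frequencies differ. Write $b(x,x_i)=b_0(x)+(-1)^{x_i}b_1(x)$, where $b_0$ and $b_1$ collect the frequencies with $u_i=0$ and $u_i=1$ respectively. Their sparsities satisfy $s_0,s_1\ge1$ and $s_0+s_1\le S$. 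If both halves $b(\cdot,0)$ and $b(\cdot,1)$ are nonzero, induction gives $|\supp b|\ge 2\cdot2^{d-1}/S$. If, say, $b(\cdot,1)=b_0-b_1\equiv0$, then $b(\cdot,0)=2b_0=2b_1$ is nonzero, because $2\ne0$. This function has sparsity at most $\min(s_0,s_1)\le S/2$, and induction gives $2^{d-1}/(S/2)=2^d/S$.

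\textbf{Inductive step in $D$.} Take $D\ge1$ and a coordinate $i$ on which $a$ genuinely depends. Write $a_c=a(\cdot,c)$ and $b_c=b(\cdot,c)$ on the hyperplane $\F_2^{d-1}$, and set $\partial a=a_0+a_1$, which has degree $\le D-1$. Let $Z_c$ be the part of $Z$ in the half $x_i=c$.
\begin{itemize}
\item If both $Z_0$ and $Z_1$ are nonempty, induction on $d$ at the same $D$ gives $|Z|\ge2\cdot2^{d-1-D}S^{-2^D}$, as required.
\item If only $Z_0\ne\emptyset$, then for every $x$ with $a_0(x)=1$ and $b_0(x)\ne0$, either $\partial a(x)=1$ or $b_1(x)=0$.
\end{itemize}
In the second case I would build a lower-degree instance on the hyperplane, with degree-$(D-1)$ part $\partial a$ (or $\partial a+1$ as appropriate) and a function of sparsity at most $S^2$ built from $b_0$ and $b_1$, such as $b_0\cdot b_1$ or $b_0^2-b_1^2$. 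I would then apply the $(D-1)$ bound with $S^2$ in place of $S$. This yields $|Z|\ge2^{(d-1)-(D-1)}(S^2)^{-2^{D-1}}=2^{d-D}S^{-2^D}$, which matches the target exactly. The exponent bookkeeping $S_D=S_{D-1}^2$ is what makes me confident this is the intended structure.

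\textbf{Main obstacle.} The hard step is the second case above. I must choose the auxiliary pair so that three things hold at once: its zero set $\{\tilde a=1,\tilde b\ne0\}$ is nonempty, it injects into $Z$, and $\tilde a$ has degree $\le D-1$. Multiplying $a_0a_1$ would destroy the degree bound, so the sparsity product has to absorb the case split. My first attempt would be to apply a derivative in a direction $h$ chosen adapted to a point $x^*\in Z$ rather than a coordinate direction. I would pair $x$ with $x+h$ and use $\tilde b(x)=b(x)b(x+h)$, which has sparsity $\le S^2$ and is nonzero exactly when both points lie in $\supp b$. If that pairing fails to isolate $Z$ cleanly, I would fall back to intersecting $Z$ with a suitable affine subspace first, using the restriction-stability noted at the start.
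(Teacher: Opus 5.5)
Your reduction of the isolation bound and of the affine clause to the density bound is fine, and your inductive proof of the $D=0$ case (the uncertainty principle) is correct. The gap is the step you flag yourself, and as set up it does not close. If you split along a coordinate $i$ chosen because $a$ depends on it, the auxiliary set $\{\partial a=1,\ b_0b_1\ne0\}$ equals $\{x\in Z_0: b_1(x)\ne0\}$. This set can be empty, for instance when $a=1+x_i$ and $b$ vanishes on the whole half $x_i=1$. Then the $(D-1)$-bound yields nothing, and the same-$D$ induction on $Z_0$ alone is a factor $2$ short. The alternative $b_0^2-b_1^2$ breaks the injection into $Z$: a point with $a_0=0$, $a_1=1$, $b_0\ne0$, $b_1=0$ lies in the auxiliary set but not in $Z$. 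Your ``first attempt'' with an adapted direction is the right idea. However, the proposal neither says how $h$ must be chosen nor checks nonemptiness, and the case split has to be made relative to $h$, not to a coordinate on which $a$ depends.

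Here is the missing argument. Fix $x^*\in Z$. If $\supp b=\{x^*\}$, then $b$ is a multiple of a point mass. All $2^d$ of its Fourier coefficients are nonzero since $\operatorname{char}\mathbb K\ne2$, so $S\ge2^d$ and the bound is trivial. Otherwise pick $y\in\supp b\setminus\{x^*\}$ and put $h=x^*+y$. Split $\F_2^d$ into the two cosets of a hyperplane not containing $h$. If $Z$ meets both cosets, use the $(D,d-1)$ case on each. If $Z\subseteq C$ for one coset $C$, set $\tilde a(x)=a(x)+a(x+h)$ and $\tilde b(x)=b(x)b(x+h)$ on $C$, so that $\deg\tilde a\le D-1$ and $\Sp(\tilde b)\le S^2$. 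For $x\in C$ with $\tilde b(x)\ne0$, the point $x+h\notin C$ lies in $\supp b\setminus Z$. This forces $a(x+h)=0$, hence $\tilde a(x)=a(x)$. So the new set is $\{x\in Z: x+h\in\supp b\}$, which contains $x^*$ and lies in $Z$, and your exponent bookkeeping then goes through.

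With this repair your route is a genuine alternative to the paper's. The paper proves isolation first. Lemma~\ref{lem:flat} grows a linear subspace $U\subseteq\supp b$ of dimension $D+1$ around the isolated point, using the $S^{2^j}$-sparse products $\prod_{u\in U}b(x+u)$ and the uncertainty lemma. Then $a|_U$ would be a point mass of degree $D+1$, a contradiction. The paper deduces density from isolation via a maximal isolating subspace, all of whose cosets meet $Z$. Your repaired induction runs from density to isolation instead. Its successive shifts $h$ span the same kind of subspace, and the iterated products $b(x)b(x+h)\cdots$ are exactly the paper's $H_U$.
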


The proof extracts a fixed-dimensional linear space inside the
nonzero-set of $b$, on which a low-degree binary polynomial cannot
be a point mass. The bound is uniform in the field, including
characteristic zero. It yields explicit estimates for every attainable
fiber of a bounded-arity ordered product into $Q\times P$, where
$Q$ is a two-group and $P$ is an odd $p$-group. Neither group need
be abelian.

For any finite group $K$, write
\[
 t(K)=|\{p:p\text{ is an odd prime and }p\mid |K|\}|.
\]

\begin{theorem}[Logarithmic threshold]\label{thm:main-threshold}
Let $H$ be a fixed finite abelian group. Then
\begin{equation}\label{eq:main-threshold}
 \D(H\oplus C_2^r)=\Ds(H\oplus C_2^r)+O_H(\log(r+2))
 \quad\Longleftrightarrow\quad t(H)\le1.
\end{equation}
More precisely:
\begin{enumerate}
\item If $t(H)=0$, then $\D(H\oplus C_2^r)=\Ds(H\oplus C_2^r)$
for every $r$.
\item If $t(H)=1$, then
$\D(H\oplus C_2^r)=\Ds(H\oplus C_2^r)+\Theta_H(\log r)$.
\item If $t=t(H)\ge2$, there is $c_H>0$ such that, for all sufficiently
large $r$,
\begin{equation}\label{eq:main-superlog}
 \D(H\oplus C_2^r)\ge\Ds(H\oplus C_2^r)
 +c_H\left(\frac{\log r}{\log\log r}\right)^t.
\end{equation}
\end{enumerate}
\end{theorem}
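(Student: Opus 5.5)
We treat the three regimes of Theorem~\ref{thm:main-threshold} separately. The displayed equivalence \eqref{eq:main-threshold} then follows from items (1)--(3), since $(\log r/\log\log r)^t$ with $t\ge2$ is not $O(\log r)$.

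\emph{Item (1).} If $t(H)=0$, then $H$ is a $2$-group and so is $H\oplus C_2^r$. Olson's theorem~\cite{OlsonI} gives $\D=\Ds$ for every $r$.

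\emph{Item (2).} Write $H=Q\oplus P$ with $Q$ a $2$-group and $P$ a nontrivial $p$-group, $p$ odd.

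For the lower bound we take the extremal sequences behind Theorem~\ref{thm:dichotomy} for $P\oplus C_2^{r}$. They exceed $\Ds$ by roughly $(\D(P)-1)\log_2 r$. We append a standard zero-sum-free sequence of length $\D(Q)-1$ supported in $Q$ and then absorb the invariant-factor bookkeeping into $O_H(1)$. This gives $\D(H\oplus C_2^r)-\Ds\ge c\log r$.

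For the upper bound we take a zero-sum-free sequence $T$ of length $\ell=\Ds(H\oplus C_2^r)+e$ and pass to the binary selection code of the subsection on mixed-characteristic fibers. Let $W\subseteq\F_2^{\ell}$ be the kernel of the binary projection, so $\dim W\ge \ell-r-\rk_2(Q)$ and $\dim W\ge e+O_H(1)$ after normalizing by $\Ds$. The map $x\mapsto\sum x_iT_i$ on $W$ lands in $Q\oplus P$, and zero-sum-freeness says $x=0$ is the unique zero. The $Q$-coordinate condition ``$Q$-part equals $0$'' is a Boolean function $a$ of algebraic degree at most $D=D(Q)$. Indeed, the bits of a sum in a bounded $2$-group are symmetric functions of bounded degree, by Lucas' theorem on carries.

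For the $P$-coordinate we use characters $\chi$ of $P$ over $\mathbb K=\mathbb C$: the indicator that the $P$-part vanishes is $|P|^{-1}\sum_\chi\chi(\cdot)$. Each $\chi(\sum x_iT_i)$ is a product of factors $\chi(T_i)^{x_i}$. The obstacle is that the Fourier sparsity of this indicator is not bounded in $\ell$. So we instead pick the function $b$ as in the proof of Theorem~\ref{thm:dichotomy}, via Theorem~\ref{thm:exact-transfer}: it encodes the $P$-valued sum with a unique zero, and its minimum Fourier support is at most $S$. We then restrict to the subcode where the binary projection vanishes and apply the isolation bound \eqref{eq:main-isolation}, which is valid on affine restrictions. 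The main point is that each of the $\ell$ distinct values $\chi(T_i)$ comes from a set of size at most $\exp(P)$, and grouping coordinates by their $P$-values leaves at most $S=|P|^{O(1)}$ relevant frequencies. Since the unique zero $x=0$ makes $Z$ a singleton, \eqref{eq:main-isolation} yields $\dim W\le D+2^D\log_2 S$.

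As stated, this bound is constant rather than logarithmic, so the reduction must use the sparsity only relative to a linear change of variables of size $O(\log r)$. Concretely, we first project away the $\le r$ binary constraints, which costs a $\log_2 r$ factor in the sparsity exponent as in Theorem~\ref{thm:dichotomy}. This gives $e=O_H(\log r)$. Making this sparsity accounting precise is the step we expect to be hardest.

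\emph{Item (3).} Let $p_1,\dots,p_t$ be the odd primes dividing $|H|$; it suffices to treat $C_{p_1}\oplus\cdots\oplus C_{p_t}\le H$ up to $O_H(1)$. We build a zero-sum-free sequence from a product construction. For each $p_j$ we take a binary code of dimension $k_j$ and length $n$ on which a $C_{p_j}$-valued weight-type function (for instance Hamming weight mod $p_j$ on a subcode, as in the $C_3$ windows of Theorem~\ref{thm:windows}) has a unique zero. Taking the tensor/product code, the joint function into $\prod_j C_{p_j}$ has a unique zero only when all coordinates vanish. This gives dimension excess about $\prod_j m_j$ with each $m_j\approx\log r/\log\log r$, provided the lengths multiply to at most $r$. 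Choosing parameters $m\approx \log r/\log\log r$ so that $m^{O(m)}\le r$ yields the bound \eqref{eq:main-superlog}. Verifying that the product preserves unique-zero isolation (via CRT on $\Z/\prod p_j$) is the delicate part of this construction.
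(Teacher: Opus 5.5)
\emph{Items (1) and the lower bound in (2) are fine:} concatenating an extremal sequence over $P\oplus C_2^r$ with a zero-sum-free sequence of length $\D(Q)-1$ in $Q$ stays zero-sum-free. The upper bound in (2) and all of (3) have genuine gaps.

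\emph{The upper bound in (2).} It rests on a false sparsity claim: grouping coordinates by their $P$-values does not leave $S=|P|^{O(1)}$ frequencies. You noticed that the resulting bound on $\dim W$ is constant. That is not a bound to be weakened afterwards; it is false. It would give $\D(H\oplus C_2^r)\le r+O_H(1)$, contradicting your own lower bound. Concretely, for $Q=1$ the isolation bound reads $d\le\log_2 S$, while the simplex sequences over $C_3\oplus C_2^r$ have $d\approx2\log_2 r$.

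The proposed repair (projecting away the binary constraints ``at a cost of a $\log_2 r$ factor'') is not an argument. Theorem~\ref{thm:exact-transfer} does not supply $b$ either. It gives a $P$-valued function that is nonzero \emph{off} the zero set, whereas \eqref{eq:main-isolation} needs a $\mathbb K$-valued $b$ that is nonzero exactly \emph{on} the $P$-zero set.

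The missing idea is to compute that indicator in characteristic $p$, not over $\mathbb C$. Over $\mathbb C$ the product of the factors $\chi(h_i)^{x_i}$ involves all subsets, which is the unboundedness you saw. In $\F_p[P]$ the product $\prod_i(1+x_i([h_i^P]-1))$ truncates after $b_P=\D(P)-1$ augmentation factors, because $I_P^{b_P+1}=0$. So the coefficient at $[0]$ is a multilinear polynomial of degree at most $b_P$ in the $\ell$ original bits. Its Fourier sparsity is at most $\Vb{b_P}{\ell}=O(\ell^{b_P})$, also after restriction to $W$. The same expansion in $\F_2[Q]$ gives the $Q$-indicator degree at most $b_Q$. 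Then \eqref{eq:main-isolation} yields $\ell-r\le b_Q+2^{b_Q}\log_2\Vb{b_P}{\ell}$. A bootstrap ($\ell\le 2r$ for large $r$) then gives $\ell\le r+O_H(\log r)$. The logarithm comes from sparsity that is polynomial in the length, not from a bounded sparsity corrected afterwards.

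\emph{Item (3).} This does not work as described. Suppose each prime $p_j$ gets its own code $V_j\le\F_2^{n_j}$ on which a weighted $C_{p_j}$-sum $\sigma_j$ already has a unique zero. Then the one-prime bound just discussed limits $\dim V_j$ to $O(\log n_j)$, and a direct sum only adds dimensions. For a tensor product you have not defined the joint function. The natural labelling by $(h^{(1)}_i,h^{(2)}_j)$ takes the value $(\wt(w)\sigma_1(v),\wt(v)\sigma_2(w))$ on a rank-one word $v\otimes w$. This vanishes whenever $p_1\mid\wt(w)$ and $p_2\mid\wt(v)$, so isolation is lost in general.

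The paper's mechanism (Barrington--Beigel--Rudich) is the opposite: no single prime condition isolates zero. Work on one cube $\F_2^d$ with $d=\prod_iQ_i-1$, where $Q_i$ is the largest power of $p_i$ not exceeding $B+1$. The Lucas polynomial $u_{Q_i}=\sum_{j<Q_i}(-1)^je_j$ over $\F_{p_i}$ has degree $Q_i-1\le B$ and detects $Q_i\mid\wt(x)$. Combining the $1-u_{Q_i}$ by the Chinese remainder theorem over $\Z/M\Z$ gives $f$ of degree at most $B$. Its zeros satisfy $\prod_iQ_i\mid\wt(x)\le d$, hence $x=0$. Its Fourier support has size at most $\Vb{B}{d}=2^{O_H(B\log B)}$. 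Turning the parity characters into coordinates (Proposition~\ref{prop:code}) gives a zero-sum-free sequence of length $r+d$ over $C_M\oplus C_2^r$ with $d\gg_H B^t$, once $B\approx\varepsilon_H\log r/\log\log r$.
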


The multi-prime lower mechanism is the Barrington--Beigel--Rudich
construction and its coding interpretation~\cite{BBR,BGG}.
The positive qualitative bound also follows from existing group-program
results through the explicit reduction in Section~\ref{sec:programs}.
Our direct approach supplies constants and a common framework for
the arithmetic and fiber statements.

\subsection{Higher two-primary exponents}
For comparing logarithmic coefficients, it is convenient to use
\[
 E_H(r)=\D(H\oplus C_2^r)-\Ds(H\oplus C_2^r),
 \qquad
 \underline\lambda(H)=\liminf_{r\to\infty}\frac{E_H(r)}{\log_2r},
 \qquad
 \overline\lambda(H)=\limsup_{r\to\infty}\frac{E_H(r)}{\log_2r}.
\]
Theorem~\ref{thm:dichotomy} determines both coefficients for odd
$p$-groups. Higher two-primary exponents can change them.

\begin{theorem}[Two-primary dependence]\label{thm:main-coefficients}
For $r\ge512$,
\[
 \D(C_{24}\oplus C_2^r)
 \le\Ds(C_{24}\oplus C_2^r)+32\log_2r+3.
\]
Moreover,
\begin{equation}\label{eq:main-C24}
 \frac{9}{5H_2(1/5)}
 \le\overline\lambda(C_{24})\le32,
 \qquad
 \frac{9}{5H_2(1/5)}=2.4933231062\ldots>2,
\end{equation}
where $H_2(u)=-u\log_2u-(1-u)\log_2(1-u)$ is binary entropy.
For every fixed odd prime $p$ and integer $a\ge1$,
\begin{equation}\label{eq:main-unbounded}
 \overline\lambda(C_{p2^a})\ge\frac1{H_2(2^{-a})}
 \sim\frac{2^a}{a+\log_2e}\qquad(a\to\infty).
\end{equation}
Thus these lower-bound coefficients are unbounded with $p$ fixed.
\end{theorem}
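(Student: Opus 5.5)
Theorem~\ref{thm:main-coefficients} makes three claims: an explicit upper bound for $C_{24}\oplus C_2^r$, the lower bound $9/(5H_2(1/5))$ on $\overline\lambda(C_{24})$, and the lower bound \eqref{eq:main-unbounded} for $C_{p2^a}$. I would treat them separately, since the upper bound is a fiber estimate and the lower bounds are constructions.

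For the upper bound, write $C_{24}\cong C_8\oplus C_3$. Take a zero-sum-free sequence in $C_{24}\oplus C_2^r$ of length $\Ds+E$. By the selection-code reduction sketched before Theorem~\ref{thm:main-intersection}, the $C_2^r$-projections of the terms span a kernel of dimension $d\ge E-O(1)$ beyond what $\Ds$ accounts for. On this kernel $\F_2^d$ the $C_{24}$-valued sum has a unique zero, namely $x=0$, among the subsets.

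The $C_8$ condition splits into bit conditions. Each bit of a sum of $C_8$-elements indexed by $x$ is a binary polynomial of degree at most $4$, because carries into bit $j$ have degree $2^j$. So the indicator that the $C_8$-component vanishes is a product of complemented bits, giving a function $a$ of degree $D\le 1+2+4=7$. For the first target I would instead keep the exact structure and count degree more carefully.

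The $C_3$ condition is handled on the Fourier side. The indicator of the $C_3$-component being zero is $\frac13\sum_{k}\omega^{k\langle\cdot\rangle}$. The relevant function $b$, the indicator of a nonzero $C_3$-component, has Fourier sparsity at most $2^{O(1)}$ in a single active coordinate block.

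Applying Theorem~\ref{thm:main-intersection} directly with $D=7$ gives $d\le 7+2^7\log_2 S$, which is too weak. The intended constant $32$ suggests a different route. I would let the $C_3$-part carry the sparsity, with $S$ of order $r$ measured against the number of distinct labels. I would then use the $C_8$ structure with effective degree $D=5$, giving $2^D=32$, and finish with \eqref{eq:main-isolation}: $d\le 5+32\log_2 r$. Bookkeeping the constant additive terms for $r\ge512$ then gives the $+3$. This step, choosing $a$ and $b$ so that the degree is $5$ and the sparsity is at most $r$, is the main obstacle. My first attempt would be to take $a$ as the $C_8$-zero indicator restricted to the affine subspace where the lowest bit already vanishes (this uses the affine-restriction clause), which drops the degree.

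For the lower bounds I would construct codes. Use binary words of length $n$ and weight $w=n/2^a$, and choose labels in $C_{p2^a}$ so that a subset sums to zero exactly when its pattern satisfies a weight congruence mod $2^a$ and the $p$-part vanishes. Choose $\binom{n}{w}\approx 2^{nH_2(2^{-a})}$ distinct columns as the $C_2^r$-coordinates, so that $r\approx nH_2(2^{-a})$. The surplus over $\Ds$ then grows like $n\approx \log_2 r/H_2(2^{-a})$, which gives \eqref{eq:main-unbounded}. For $C_{24}$ the refined count uses weight fraction $1/5$ with the $C_8$ and $C_3$ constraints combined, giving surplus $9n/5$ against $r\approx nH_2(1/5)$. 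I would verify zero-sum-freeness by checking that every nonzero kernel codeword violates one of the modular constraints. The asymptotic $H_2(2^{-a})\sim(a+\log_2 e)2^{-a}$ is routine.
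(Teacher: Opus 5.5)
\paragraph{Upper bound.} The gap is at the step you flag yourself, and the proposed fix does not work.

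\emph{The sparsity is not $O(r)$.} For $Z=\{a=1,b\ne0\}$ to be the zero fiber, $b$ must be the $\F_3$-indicator that the $C_3$-component \emph{vanishes}, not that it is nonzero. That indicator is $b=1-\sigma^2$ with $\sigma=\sum_ih_i\ell_i$ computed in $\F_3$. This characteristic-three expansion ($I^3=0$ in $\F_3[C_3]$) is what makes the sparsity polynomial at all. Over $\mathbb C$, the average $\frac13\sum_k\omega^{k\sigma}$ generally has exponentially many characters. The Fourier support of $b$ lies among $0$, the $t_i$, and the pair sums $t_i+t_j$ of the coordinate functionals on the kernel. So the available bound is $S\le V_2(L)\le 2r^2+r+1$, i.e.\ $\log S\approx2\log r$, and your proposal gives no mechanism for reducing it.

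\emph{The degree does not drop to $5$.} Restricting to the hyperplane where the lowest bit of the $C_8$-sum vanishes only removes the linear factor. It leaves $(1+b_1)(1+b_2)$, of degree at most $2+4=6$. For all labels equal to $1$, in coordinates on the even-weight code the restricted indicator is $1+e_1+\cdots+e_6$, so degree $6$ really occurs. Even granting $D=5$, quadratic sparsity would give coefficient $2^5\cdot2=64$.

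\emph{The missing idea} is the mod-four zero subcode, Lemma~\ref{lem:mod4}. The second-bit condition is quadratic, so you cannot simply restrict to it. Instead, greedily build a \emph{linear} subcode $U$ of dimension $k\ge\lfloor(d-2)/2\rfloor$ on which the $C_8$-sum is $\equiv0\pmod4$. The extension constraints form a weighted sum into $C_4\oplus C_2^j$ whose zero indicator has degree $\le3+j$; then use Lemma~\ref{lem:binary-weight} and the identity $q(x\oplus y)=q(x)+q(y)-2\sum_iq_ix_iy_i$. On $U$ the residual $q/4$ is cubic (Lemma~\ref{lem:residual-cubic}). Theorem~\ref{thm:main-intersection} with $D=3$ and $S=V_2(L)$ then gives $k\le3+8\log V_2(L)$, hence $d\le2k+3\le9+16\log V_2(L)$. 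This is where $32=2\cdot2^3\cdot2$ comes from: halved dimension, degree three, quadratic sparsity. Running the argument of Proposition~\ref{prop:D-upper} with $L\le2r$ and $r\ge512$ gives $\D\le r+10+\lfloor16\log(2r^2+r+1)\rfloor<\Ds+32\log r+3$.

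\paragraph{Lower bounds.} You have the right entropy numerology (it should read $\log_2r\approx nH_2(2^{-a})$, not $r$), but no construction. The sentence ``choose labels so that a subset sums to zero exactly when its pattern satisfies a weight congruence mod $2^a$ and the $p$-part vanishes'' restates the goal. A congruence modulo $2^a$ cannot isolate zero, since any word of weight $2^a$ satisfies it. So the fixed group $C_p$ must certify divisibility of the weight by a \emph{growing} scale.

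That is the missing idea, Lemma~\ref{lem:detector}. For $t=p^k$ and $n=At-1$, the $\F_p$-indicator $b$ of $t\mid\wt(x)$ equals $\sum_{|U|<t}\chi_U$ with every coefficient one (by Frobenius and $p-1\mid t-1$). Equivalently, $2\sum_{1\le|U|<t}\ell_U=1-b$ in $\F_p$. Take the graph code with one parity coordinate $\ell_U(x)$ for each $1\le|U|<t$, labelled $2h$, and label the systematic coordinates by a generator $q$ of $C_{2^a}$. Then the odd component of a kernel selection is $h(1-b(x))$. Its vanishing forces $\wt(x)=tz$ with $0\le z\le A-1$. Since $t$ is odd, $tzq=0$ with $A=2^a$ forces $z=0$. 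The number of parity columns is $\sum_{j=1}^{t-1}\binom nj=2^{nH_2(1/A)+o(t)}$, which is the count you guessed.

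For $C_{24}$, the value $9/(5H_2(1/5))$ needs two such blocks, of lengths $5t-1$ and $4t-1$. Two blocks are permitted because $(g,g)$ is zero-sum free in $C_3$. They carry the $C_8$-configuration $(q,4),(q,3)$, which is admissible because $0\le z_1+z_2\le7$ (Theorem~\ref{thm:box}). The surplus $9t-2$ is then measured against $\log_2R_k\sim5tH_2(1/5)$, which is governed only by the larger block. Your ``weight fraction $1/5$ with the $C_8$ and $C_3$ constraints combined'' does not derive the numerator $9$; it is read off from the answer.
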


The lower bound in \eqref{eq:main-C24} holds along an explicit
sequence of ranks. It does not determine the exact Davenport constant,
the optimal coefficient, or the existence of a coefficient limit.
The underlying transfer, Theorem~\ref{thm:box}, converts finite
zero-sum-free configurations in an abelian two-group into entropy
lower bounds with a fixed elementary odd-primary target.
Appendix~\ref{sec:nonabelian} gives corresponding logarithmic-order
conclusions for both small and large nonabelian Davenport constants.

\subsection{Connections with earlier work}
Logarithmic lower bounds for large elementary two-components have
a history in zero-sum theory. For cyclic odd $H$, the lower coefficient
$\D(H)-1$ is already contained in
Liu~\cite[Theorem~4.3 and Corollary~4.4]{Liu}.
The simplex construction in Section~\ref{sec:uniform} gives a direct
form of this non-dispersive block method for arbitrary odd-order
abelian label groups. Coding-theoretic approaches also appear in
Plagne--Schmid~\cite{PS}. Brakensiek, Gopi, and Guruswami~\cite{BGG}
relate modular constraints on binary codes to sparse polynomial and
weak-OR representations. We make the exact passage to Davenport
constants explicit in Section~\ref{sec:transfer}.

The primary algebraic input is Olson's group-ring method. The
finite sphere-covering inequality in Corollary~\ref{cor:sphere}
also follows directly from Han--Zhang~\cite[Proposition~2.4]{HZ},
by taking the Hall subgroup to be $C_2^r$.
We prove the stronger code-level covering-radius constraint and
combine it with matching simplex constructions, uniform estimates,
and signed Fourier identities for the exact $C_3$ intervals.
Section~\ref{subsec:covering} compares direct substitution into the
same criterion for $C_{24}\oplus C_2^r$.

Idziak, Kawa{\l}ek, Krzaczkowski, and Wei{\ss}~\cite{IKKW} establish
Boolean-conjunction lower bounds for programs over certain fixed
groups and deduce polynomial lower bounds on nonempty solution
densities. Section~\ref{sec:programs} constructs an explicit fixed
wreath-product simulation of local maps on affine codes. It places
our fixed nilpotent targets in their setting and recovers the
qualitative fiber estimates and logarithmic Davenport upper bounds.
The quantitative estimates here have independent proofs by Fourier
support and primary group algebras.

\subsection{Organization}
Section~\ref{sec:preliminaries} develops the selection-code and primary
representations. Section~\ref{sec:uniform} proves the sharp odd-primary
and uniform estimates. Sections~\ref{sec:transfer} and~\ref{sec:C3}
give the exact Fourier transfer and the $C_3$ intervals.
Sections~\ref{sec:intersection}--\ref{sec:programs} treat mixed-characteristic
intersections, local fibers, and group programs.
Section~\ref{sec:Davenport} proves the general threshold and the
$C_{24}$ upper bound. Section~\ref{sec:entropy} gives the entropy
construction, and Section~\ref{sec:questions} discusses its limitations
and further questions. The appendices contain the nonabelian transfer
and the large-rank $C_3$ localization.

AI tools developed by OpenAI were used in this work.

\section{Binary codes and primary representations}\label{sec:preliminaries}
Unless specified otherwise, logarithms have base two. Write
\[
 \Vb{b}{L}=\sum_{j=0}^b\binom Lj,
\]
with $\binom Lj=0$ for $j>L$. We set $\D(1)=1$, so that
$\D(1)-1=0$. The algebraic degree of a binary function is the
degree of its unique multilinear polynomial over $\F_2$.
The zero function may be assigned degree $-\infty$.

\subsection{The code correspondence}
\begin{lemma}\label{lem:constant-offset}
For every fixed finite abelian group $H$, there is an integer $c_H$
such that
\[
 \Ds(H\oplus C_2^r)=r+c_H
\]
for all sufficiently large $r$.
\end{lemma}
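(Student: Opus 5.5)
The plan is to work directly from the definition $\Ds(G)=1+\sum_{i=1}^s(n_i-1)$ in the introduction. It is well defined because the invariant factors $1<n_1\mid\cdots\mid n_s$ of a finite abelian group are unique up to isomorphism. The task is therefore to compute the invariant factor decomposition of $G_r:=H\oplus C_2^r$ explicitly and watch how it changes as $r$ grows.

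First I would split $H=H_2\oplus H'$ into its two-primary part and its odd part. Write $H_2\cong\bigoplus_{j=1}^k C_{2^{e_j}}$ with $e_j\ge1$. For each odd prime $q\mid |H|$, write the $q$-part as $\bigoplus_{j=1}^{k_q}C_{q^{e_{q,j}}}$, and put $s'=\max_q k_q$, with $s'=0$ if $H$ has odd part trivial. The two-primary part of $G_r$ is $C_2^r\oplus H_2$. It has rank $r+k$ and exponent multiset consisting of $r$ copies of $1$ together with $e_1,\dots,e_k$.

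Next I would use the standard recipe for invariant factors. For each prime, list the exponents of its cyclic components in decreasing order. The $i$-th largest invariant factor is then the product over primes of the $i$-th largest prime power, padded by $1$'s. The number of invariant factors is $s=\max(r+k,s')$.

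Now fix $r_0$ with $r_0>s'$. For $r\ge r_0$ we have $s=r+k$. Only the largest $s'$ invariant factors receive any odd contribution. These top $s'$ positions use the $s'$ largest two-exponents, and those exponents stabilize once $r\ge s'$: they are the $s'$ largest among $e_1,\dots,e_k,1,1,\dots$. Hence for $r\ge r_0$ the top $\max(s',k)$ invariant factors are independent of $r$. Every remaining factor equals exactly $2$, and there are $r+k-\max(s',k)$ of them.

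From this description,
\[
 \Ds(G_r)=1+\sum_{\text{top factors}}(n_i-1)+\bigl(r+k-\max(s',k)\bigr).
\]
This is $r$ plus a constant depending only on $H$, so $c_H:=\Ds(G_{r_0})-r_0$ works for all $r\ge r_0$. Equivalently, $\Ds(G_{r+1})-\Ds(G_r)=1$ for $r\ge r_0$: the added $C_2$ becomes a new bottom factor $2$ and leaves the odd-carrying factors unchanged.

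The only delicate point is this merging step. When $r$ is small, an added $C_2$ can combine with an odd cyclic factor, for example $C_3\oplus C_2\cong C_6$, and then the increment is not $1$. The condition $r>s'$ is exactly what rules this out. That is why the lemma is stated only for sufficiently large $r$, and I would check this alignment of prime-power exponents carefully.
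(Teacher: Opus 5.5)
Your proposal is correct and is essentially the paper's argument. Once the two-rank $r+k$ of $H\oplus C_2^r$ is at least every odd-primary rank, adding a further $C_2$ appends one new invariant factor $2$ and leaves the others fixed, so $\Ds$ rises by exactly one; your explicit description of the top $\max(s',k)$ factors spells this out. The one quibble is that $r>s'$ is a sufficient condition rather than \emph{exactly} the needed one, since $r+k\ge s'$ already suffices; this does not affect the proof.
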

\begin{proof}
For $r$ sufficiently large, the rank of the two-primary component
of $H\oplus C_2^r$ is at least the rank of every odd-primary
component. Every nontrivial invariant factor is then even.
Adding a further $C_2$ inserts one initial invariant factor equal to
two, leaving all the others fixed. It increases $\Ds$ by one.
\end{proof}

If $h_1,\ldots,h_L\in H$, a weighted sum on a binary code is the map
\[
 \sigma_h(x)=\sum_{i=1}^L x_i h_i.
\]
The bits are interpreted as the integers zero and one; this map is
usually not a homomorphism from the binary additive group to $H$.

\begin{proposition}[Selection kernels and realizations]\label{prop:code}
Let $H$ be a finite abelian group.
\begin{enumerate}
\item If $((h_i,v_i))_{i=1}^L$ is zero-sum-free over $H\oplus C_2^r$,
then the binary code
\[
 V=\{x\in\F_2^L:\textstyle\sum_i x_iv_i=0\}
\]
has dimension at least $L-r$, and $\sigma_h^{-1}(0)\cap V=\{0\}$.
\item Conversely, suppose $V\le\F_2^L$ has dimension $d$ and
$\sigma_h^{-1}(0)\cap V=\{0\}$. A full-rank parity-check matrix
$B$ of $V$ gives a zero-sum-free sequence
$((h_i,B_i))_{i=1}^L$ over $H\oplus C_2^{L-d}$.
For every $r\ge L-d$, padding by independent binary basis elements
gives a zero-sum-free sequence of length $r+d$ over $H\oplus C_2^r$.
\end{enumerate}
\end{proposition}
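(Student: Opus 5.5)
The plan is to translate subsequence sums into the two coordinates separately. A subsequence of $((h_i,v_i))$ is determined by its indicator vector $x\in\F_2^L$. Its sum is
\[
 \Bigl(\sum_i x_ih_i,\ \sum_i x_iv_i\Bigr)=(\sigma_h(x),\,Bx)
\]
in $H\oplus C_2^r$. Here the second coordinate is computed in $\F_2^r$, and the first treats the bits as integers, exactly as in the definition of $\sigma_h$. Hence the sequence is zero-sum-free precisely when no nonzero $x$ satisfies both $\sigma_h(x)=0$ and $\sum_i x_iv_i=0$. This equivalence is the single observation behind both parts.

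For part (1), the map $x\mapsto\sum_i x_iv_i$ is $\F_2$-linear from $\F_2^L$ to $\F_2^r$. Its kernel $V$ therefore has dimension at least $L-r$ by rank--nullity. The zero-sum-free hypothesis says exactly that no nonzero $x\in V$ has $\sigma_h(x)=0$. Since $\sigma_h(0)=0$, this gives $\sigma_h^{-1}(0)\cap V=\{0\}$.

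For part (2), let $B$ be an $(L-d)\times L$ parity-check matrix of full rank with $V=\ker B$, and let $B_i\in\F_2^{L-d}=C_2^{L-d}$ be its columns. By the equivalence above, a nonempty zero-sum subsequence would give a nonzero $x\in\ker B=V$ with $\sigma_h(x)=0$, which contradicts the hypothesis. For the padding, I would embed $C_2^{L-d}$ into $C_2^r$ as the first $L-d$ coordinates. I would then append the elements $(0,e_j)$ for $L-d<j\le r$. The total length is $L+(r-(L-d))=r+d$.

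The only step needing care is the padded case. The new parity-check matrix is $[B\mid 0;\,0\mid I]$, and the enlarged weight vector assigns $0\in H$ to the new entries. If $(x,y)$ indexes a zero-sum subsequence, the $e_j$ coordinates force $y=0$. This reduces to the unpadded case, so $x=0$.

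There is no genuine obstacle here. The one point to state explicitly is that $\sigma_h$ is not additive, so the argument must work pointwise with indicator vectors rather than through any homomorphism property.
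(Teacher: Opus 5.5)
Your proof is correct and follows essentially the same route as the paper. Both identify subsequences with selection vectors $x\in\F_2^L$, observe that the sum vanishes exactly when $x\in V$ and $\sigma_h(x)=0$, and note that the padded basis coordinates cannot appear in a zero-sum selection.
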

\begin{proof}
A binary selection vector describes a subsequence, and its two
components sum to zero exactly when it belongs to $V$ and its
$H$-sum vanishes. This proves both correspondences. In the second
part, the appended basis elements use new binary coordinates, so a
zero-sum selection cannot contain any of them.
\end{proof}

For later use, if $T:\F_2^d\to\F_2^R$ is linear, the graph code
$\{(x,Tx):x\in\F_2^d\}$ has parity-check matrix $[\,T\mid I_R\,]$.
It has length $d+R$, dimension $d$, and parity-check rank exactly $R$.

\begin{lemma}[Binary minimum weight]\label{lem:binary-weight}
A nonzero binary polynomial of degree at most $e$ in $d$ variables
has at least $2^{d-e}$ nonzero values.
\end{lemma}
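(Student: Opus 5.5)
This is the classical Reed--Muller minimum-distance bound. I would prove it by induction on the number of variables $d$, with the degree $e$ allowed to vary. Two conventions are needed first. Since the polynomial is nonzero, its degree is at least $0$. If $e\ge d$ the claim only asks for one nonzero value, which holds because a nonzero multilinear polynomial is a nonzero function; here I use uniqueness of the multilinear representation. So we may assume $0\le e\le d$. The base case $d=0$ forces $e=0$ and a nonzero constant, which has $1=2^{0}$ nonzero values.

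For the inductive step, write $f(x_1,\dots,x_d)=g(x_1,\dots,x_{d-1})+x_d\,h(x_1,\dots,x_{d-1})$, where $g$ and $h$ are multilinear, $\deg g\le e$ and $\deg h\le e-1$. The two restrictions are $f|_{x_d=0}=g$ and $f|_{x_d=1}=g+h$, both of degree at most $e$ in $d-1$ variables. I would split into three cases.

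\begin{enumerate}
\item If $h=0$, then $g=f\ne0$. Each restriction equals $g$, which by induction has at least $2^{d-1-e}$ nonzero values. The two restrictions together give $2\cdot 2^{d-1-e}=2^{d-e}$.
\item If $h\ne0$ and both $g$ and $g+h$ are nonzero, induction applied to each half gives $2^{d-1-e}+2^{d-1-e}=2^{d-e}$.
\item If $h\ne0$ and one of $g$, $g+h$ vanishes, then the other equals $h$ or $-h=h$. That restriction has degree at most $e-1$, and induction with degree $e-1$ in $d-1$ variables gives at least $2^{(d-1)-(e-1)}=2^{d-e}$ nonzero values in that half alone.
\end{enumerate}

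The only point needing care is the edge cases of the induction. When $e-1<0$ in the third case, $h$ is a nonzero constant, so the degree bound $e-1\ge0$ automatically holds whenever $h\ne0$. When $e>d-1$, the induction hypothesis should be read as the trivial bound $\ge1\ge 2^{(d-1)-e}$, interpreted appropriately. The cleanest fix is to state the induction for all real exponents with the bound $\max(1,2^{d-e})$, or simply to observe that $e\le d$ may be assumed throughout. With this bookkeeping there is no real obstacle, since the argument is the standard $(u\mid u+v)$ recursion for Reed--Muller codes.
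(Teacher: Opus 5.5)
Your proof is correct and is essentially the paper's argument: both induct on $d$ through the splitting $f=g+x_dh$ with $\deg h\le e-1$. The paper avoids your three-case analysis by choosing $x_d$ to be a variable on which $f$ depends, so that $h\ne0$; at every $x'$ with $h(x')=1$ the values $f(x',0)$ and $f(x',1)$ differ, so the induction hypothesis applied to $h$ alone already gives $2^{(d-1)-(e-1)}=2^{d-e}$ nonzero values of $f$.
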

\begin{proof}
The assertion is immediate for a nonzero constant. Otherwise
write $f(x',x_d)=g(x')+x_dh(x')$ with $h\ne0$, after relabeling
a variable on which $f$ depends. The degree of $h$ is at most
$e-1$. For every $x'$ with $h(x')=1$, the two values of $f$
differ, so at least one is nonzero. Induction gives at least
$2^{(d-1)-(e-1)}=2^{d-e}$ such values.
\end{proof}

\subsection{Fourier support in characteristic different from two}
For a field $\mathbb K$ with $\operatorname{char}\mathbb K\ne2$,
put $\chi_u(x)=(-1)^{u\cdot x}$ and
\[
 f(x)=\sum_{u\in\F_2^d}\widehat f(u)\chi_u(x),
 \qquad
 \widehat f(u)=2^{-d}\sum_{x\in\F_2^d}f(x)\chi_u(x),
 \qquad
 \Sp(f)=|\supp\widehat f|.
\]
The characters form a basis over $\mathbb K$. Translation preserves
sparsity. If $x=Az+c$ is an affine substitution, each character
restricts to a scalar multiple of one character of $z$, so sparsity
does not increase. Also
\begin{equation}\label{eq:sparsity-product}
 \Sp(fg)\le\Sp(f)\Sp(g),
\end{equation}
because Fourier support of a product is contained in the sumset of
the two supports.

\begin{lemma}[Finite-field uncertainty]\label{lem:uncertainty}
For every nonzero $f:\F_2^d\to\mathbb K$,
\[
 |\supp f|\,\Sp(f)\ge2^d.
\]
\end{lemma}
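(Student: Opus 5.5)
We prove the finite-field uncertainty principle $|\supp f|\,\Sp(f)\ge2^d$ by induction on $d$, in the same spirit as Lemma~\ref{lem:binary-weight}. The classical proof via the $\ell^1/\ell^\infty$ argument is unavailable: $\mathbb K$ is an arbitrary field of characteristic different from two and carries no absolute value. So we avoid norms entirely and use only the fact that restriction to a hyperplane or coset can be computed by splitting off one coordinate. For $d=0$ the claim reads $1\cdot1\ge1$.

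For the inductive step, write $x=(x',x_d)$ and set $f_0(x')=f(x',0)$ and $f_1(x')=f(x',1)$. Group the characters by the last coordinate $u_d$:
\[
 f(x',x_d)=g(x')+(-1)^{x_d}h(x'),
\]
where $g$ collects the terms with $u_d=0$ and $h$ those with $u_d=1$. Then $\Sp(f)=\Sp(g)+\Sp(h)$, where sparsities of $g,h$ are computed in $d-1$ variables. Moreover $f_0=g+h$ and $f_1=g-h$, so $\Sp(f_0),\Sp(f_1)\le\Sp(g)+\Sp(h)=\Sp(f)$. Also $|\supp f|=|\supp f_0|+|\supp f_1|$.

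We split into cases.
\begin{enumerate}
\item If both $f_0$ and $f_1$ are nonzero, induction gives $|\supp f_i|\ge 2^{d-1}/\Sp(f_i)\ge 2^{d-1}/\Sp(f)$ for each $i$. Summing yields $|\supp f|\ge 2^d/\Sp(f)$.
\item If, say, $f_1=0$, then $g=h$ because $2$ is invertible in $\mathbb K$. Hence $f_0=2g$, so $\Sp(f_0)=\Sp(g)$, and $\Sp(f)=2\Sp(g)$. Induction gives $|\supp f|=|\supp f_0|\ge 2^{d-1}/\Sp(g)=2^d/\Sp(f)$. The case $f_0=0$ is symmetric, with $g=-h$.
\end{enumerate}
Both cases are nonvacuous since $f\ne0$.

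The only delicate point is the degenerate case (ii). There the sparsity bound $\Sp(f_i)\le\Sp(f)$ is too weak, and one must notice that vanishing of one half forces $\Sp(f)=2\Sp(f_0)$ exactly. This is precisely where $\operatorname{char}\mathbb K\ne2$ is used, both for the character basis and for dividing by $2$. Everything else is bookkeeping, and the inductive factor of two in each case exactly matches the factor $2^d$ in the statement.
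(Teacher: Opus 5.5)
Your proof is correct, but it takes a different route from the paper's.

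\emph{The paper's route.} The paper's argument is coordinate-free. The span $\mathcal T_f$ of the translates of $f$ has dimension exactly $\Sp(f)$, because averaging translates against $\chi_u$ isolates each frequency, using $2^d\ne0$. A basis of translates must have supports covering $\F_2^d$; otherwise every translate vanishes at a common point and $f=0$. Each basis element has support of size $|\supp f|$, so the union bound finishes the proof.

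\emph{Your route.} You instead split along one coordinate, using $\Sp(f)=\Sp(g)+\Sp(h)$ and $|\supp f|=|\supp f_0|+|\supp f_1|$. You then handle the degenerate case, where one half vanishes, with the exact identity $\Sp(f)=2\Sp(g)$. Every step checks out, and the two cases are exhaustive. One small point of attribution: in case (ii), $g=h$ follows directly from $f_1=g-h=0$ with no division. The hypothesis $\operatorname{char}\mathbb K\ne2$ enters instead through $\Sp(2g)=\Sp(g)$ and the basis property of the characters.

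\emph{What each buys.} The paper's argument is shorter and basis-free. It extends essentially verbatim to any finite abelian group whose order is invertible in a field containing the character values. It also foregrounds the translation structure that Lemma~\ref{lem:flat} later exploits through products of translates. Your argument is more elementary: it needs only uniqueness of character expansions on $\F_2^{d-1}$, not the computation of $\dim\mathcal T_f$. It also parallels the hyperplane-splitting induction of Lemma~\ref{lem:binary-weight}. However, it leans on the product structure of $\F_2^d$ and does not generalize as directly to other groups.
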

\begin{proof}
Let $\mathcal T_f$ be the linear span of all translates of $f$.
Fourier diagonalization shows that its dimension is $\Sp(f)$:
character averaging extracts each frequency with nonzero coefficient.
Choose a basis of translates. Their supports cover $\F_2^d$, since
otherwise every translate of $f$ would vanish at a common point,
forcing $f=0$. Each basis function has support size $|\supp f|$.
The union bound proves the claim. No absolute values or positivity
are used.
\end{proof}

A multilinear polynomial of degree at most $b$ in $L$ binary
coordinates has Fourier support at most $\Vb{b}{L}$.
Indeed $x_i=(1-\chi_{e_i})/2$, so only characters supported on at
most $b$ of the original coordinates occur. The same bound holds
after affine restriction.

\subsection{Primary group algebras}
For a finite $p$-group $R$, write $I_R$ for the augmentation ideal
of $\F_p[R]$ and put
\[
 \kap(R)=\max\{j:I_R^j\ne0\},\qquad \kap(1)=0.
\]
The ideal is nilpotent. One elementary proof is induction on $|R|$:
choose a central element $z$ of order $p$. The kernel of
$\F_p[R]\to\F_p[R/\langle z\rangle]$ is the central ideal generated
by $z-1$, whose $p$th power vanishes. Nilpotence in the quotient
therefore gives nilpotence in $\F_p[R]$. Jennings' theorem provides
the exact general index~\cite{Jennings}.

If $R=\bigoplus_i C_{p^{a_i}}$ is abelian, the truncated-polynomial
description gives
\begin{equation}\label{eq:abelian-kappa}
 \F_p[R]\cong
 \F_p[T_1,\ldots,T_s]/(T_1^{p^{a_1}},\ldots,T_s^{p^{a_s}}),
 \qquad
 \kap(R)=\sum_i(p^{a_i}-1)=\D(R)-1.
\end{equation}

Let $g_1,\ldots,g_T$ be group-valued functions. In the relevant
primary group algebra,
\begin{equation}\label{eq:augmentation}
 \prod_{j=1}^T[g_j(x)]
 =\prod_{j=1}^T\bigl(1+([g_j(x)]-1)\bigr).
\end{equation}
Every term with more than $\kap(R)$ augmentation factors is zero.
The expansion preserves the original order, including for
nonabelian groups. Taking the coefficient of $[h]$ gives the exact
indicator that the group product equals $h$.

\section{Sharp estimates for odd-primary label groups}\label{sec:uniform}
\subsection{A covering-radius constraint}

For a binary linear code $C\subseteq\F_2^L$, its covering radius is
\[
 \rho(C)=\max_{w\in\F_2^L}\min_{c\in C}d_H(w,c),
\]
where $d_H$ is Hamming distance. We use the augmentation-ideal identity \eqref{eq:abelian-kappa}.
In group-algebra formulas below, $X^h$ denotes the basis element $[h]$.

\begin{theorem}[Covering transfer]\label{thm:cover}
Let $H$ be a finite abelian $p$-group with $p$ odd, and put $b=\D(H)-1$.
For a sequence
\[
 S=((h_i,v_i))_{i=1}^L\quad\text{over }H\oplus\F_2^r,
\]
define its binary character code by
\[
 C(S)=\{(\langle x,v_1\rangle,\ldots,\langle x,v_L\rangle):
 x\in\F_2^r\}\subseteq\F_2^L.
\]
If $S$ is zero-sum free, then
\begin{equation}\label{eq:radius}
 \rho(C(S))\le b.
\end{equation}
\end{theorem}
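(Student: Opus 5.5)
The plan is Olson's group-ring method in $\F_p[H]$, twisted by the binary characters, with the signs in the twist adjusted to the target word. Since $p$ is odd, the characters $\chi_x(v)=(-1)^{\langle x,v\rangle}$ of $\F_2^r$ take values $\pm1\in\F_p$, and $2$ is invertible in $\F_p$. Fix an arbitrary word $w\in\F_2^L$. It suffices to find $x\in\F_2^r$ whose codeword $c(x)=(\langle x,v_i\rangle)_i\in C(S)$ differs from $w$ in at most $b$ coordinates.

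First, consider for each $x\in\F_2^r$ the element
\[
 F_x=\prod_{i=1}^L\bigl(1+(-1)^{w_i}\chi_x(v_i)X^{h_i}\bigr)\in\F_p[H].
\]
Expand each product over subsets $I\subseteq\{1,\ldots,L\}$ and sum over $x$. Character orthogonality gives
\[
 \sum_x F_x=2^r\sum_{I:\ \sum_{i\in I}v_i=0}(-1)^{\sum_{i\in I}w_i}X^{\sum_{i\in I}h_i}.
\]
Because $S$ is zero-sum free, the only $I$ with $\sum_I v_i=0$ and $\sum_I h_i=0$ is $I=\emptyset$. This is exactly the correspondence of Proposition~\ref{prop:code}. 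Hence the coefficient of $X^0$ on the right is $2^r\ne0$ in $\F_p$. In particular $\sum_xF_x\ne0$, so some $F_x$ is nonzero.

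Second, analyze that nonzero $F_x$ factor by factor. At positions where $w_i=c(x)_i$ the factor is $1+X^{h_i}=2-(1-X^{h_i})$. This is a unit, since $1-X^{h_i}$ lies in the nilpotent augmentation ideal $I_H$ and $2$ is invertible. At positions where $w_i\ne c(x)_i$ the factor is $1-X^{h_i}\in I_H$. Since $\F_p[H]$ is commutative, $F_x$ equals a unit times a product of $d_H(w,c(x))$ elements of $I_H$. By \eqref{eq:abelian-kappa}, $I_H^{b+1}=0$ with $b=\kap(H)=\D(H)-1$. Nonvanishing of $F_x$ therefore forces $d_H(w,c(x))\le b$. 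As $w$ was arbitrary, $\rho(C(S))\le b$.

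The only delicate point is choosing the sign twist correctly. The agreeing positions must become units and the disagreeing positions must land in the augmentation ideal, not the reverse. The reverse choice would only give the useless bound "at most $b$ agreements".
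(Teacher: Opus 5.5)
Your proof is correct and is essentially the paper's argument. The paper forms the same product $\prod_i(1-\varepsilon_iX^{(h_i,v_i)})$ in $\F_p[H\oplus\F_2^r]$, whose Fourier components are exactly your $F_x$, and derives a contradiction because every component vanishes while the $X^0$ coefficient is $1$; your sum $\sum_xF_x$ is just the explicit Fourier inversion behind the paper's injectivity claim, and your observation that agreeing factors are units is harmless but unnecessary, since $I_H$ is an ideal and any product containing $b+1$ of its elements already vanishes.
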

\begin{proof}
Let $I$ be the augmentation ideal of $K[H]$, where $K=\F_p$.
Fourier decomposition on the elementary two-component
gives an injective algebra map
\[
 K[H\oplus\F_2^r]\longrightarrow\prod_{x\in\F_2^r}K[H],\qquad
 X^{(h,v)}\longmapsto\bigl((-1)^{\langle x,v\rangle}X^h\bigr)_x.
\]
Indeed, the character matrix on $\F_2^r$ is invertible because $2$ is
invertible in $K$.

Suppose that a vector $w\in\F_2^L$ has distance at least $b+1$ from
every codeword. Set $\varepsilon_i=(-1)^{w_i+1}$ and consider
\[
 F=\prod_{i=1}^L(1-\varepsilon_iX^{(h_i,v_i)}).
\]
In the component indexed by $x$, the factor at position $i$ belongs to
$I$ whenever $w_i\ne\langle x,v_i\rangle$, since that factor is then
$1-X^{h_i}$. At least $b+1$ factors belong to $I$, so every component
vanishes by \eqref{eq:abelian-kappa}. Hence $F=0$.
On the other hand, if $S$ is zero-sum free, only the empty selection
contributes to the coefficient of $X^0$ in $F$. That coefficient is $1$,
which is impossible.
\end{proof}

\begin{corollary}[Finite sphere-covering bound]\label{cor:sphere}
Every zero-sum-free sequence of length $L$ over $H\oplus C_2^r$ satisfies
\begin{equation}\label{eq:sphere}
 2^{L-r}\le \Vb{b}{L}.
\end{equation}
Consequently,
\begin{equation}\label{eq:finite-upper}
 \D(H\oplus C_2^r)
 \le \min\{L\ge1: \Vb{b}{L}<2^{L-r}\}.
\end{equation}
\end{corollary}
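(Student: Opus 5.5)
The corollary follows from Theorem~\ref{thm:cover} by the sphere-covering count, followed by a minimality argument.

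Let $S=((h_i,v_i))_{i=1}^L$ be zero-sum free over $H\oplus C_2^r$, and identify $C_2^r$ with $\F_2^r$. By Theorem~\ref{thm:cover}, the binary character code $C(S)\subseteq\F_2^L$ has covering radius at most $b$. Hence the Hamming balls of radius $b$ centred at the codewords cover $\F_2^L$. Each such ball has exactly $\Vb{b}{L}$ points; the convention $\binom Lj=0$ for $j>L$ handles the case $b>L$. This gives
\[
 2^L\le |C(S)|\,\Vb{b}{L}.
\]

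It remains to bound $|C(S)|$. The code $C(S)$ is the image of the linear map $\F_2^r\to\F_2^L$, $x\mapsto(\langle x,v_i\rangle)_i$, so $|C(S)|\le 2^r$. Substituting gives $2^L\le 2^r\Vb{b}{L}$, which is \eqref{eq:sphere}. The only point needing care is to use the image of the map rather than assuming it is injective, since the $v_i$ need not span $\F_2^r$; the inequality $|C(S)|\le2^r$ holds in either case.

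For \eqref{eq:finite-upper}, let $N$ be the minimum on the right-hand side. Suppose $\D(H\oplus C_2^r)>N$. Then there is a zero-sum-free sequence of length $\D(H\oplus C_2^r)-1\ge N$, and any subsequence of it is again zero-sum free. Truncating to its first $N$ terms gives a zero-sum-free sequence of length $N$, so \eqref{eq:sphere} yields $2^{N-r}\le\Vb{b}{N}$. This contradicts the defining property $\Vb{b}{N}<2^{N-r}$ of $N$. Hence $\D(H\oplus C_2^r)\le N$.

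The minimum exists because $\Vb{b}{L}$ is a polynomial in $L$ while $2^{L-r}$ grows exponentially. If $H$ is trivial, then $b=0$; the paper's convention $\D(1)=1$ makes $\Vb{0}{L}=1$, and the bound still reads correctly.
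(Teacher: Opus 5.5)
Your proof is correct and follows the paper's main route. You apply Theorem~\ref{thm:cover} to cover $\F_2^L$ by at most $2^r$ Hamming balls of radius $b$ around the codewords of $C(S)$. Your truncation argument then makes explicit the deduction of \eqref{eq:finite-upper}, which the paper simply states as ``consequently''.

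The paper also adds an independent Fourier-support check on the selection kernel, showing $2^{\dim\ker A}\le\Vb{b}{L}$ directly. Your argument does not need it.
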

\begin{proof}
Theorem~\ref{thm:cover} proves \eqref{eq:sphere} by covering $\F_2^L$
with at most $2^r$ balls of size $\Vb{b}{L}$. Here is a direct algebraic
verification as well. Let $A:\F_2^L\to\F_2^r$ have columns $v_i$ and
put $d=\dim\ker A\ge L-r$. Parameterize its kernel linearly by
$x\in\F_2^d$, with coordinates $\ell_i(x)=\langle t_i,x\rangle\in\{0,1\}$.
For $\chi_t(x)=(-1)^{\langle t,x\rangle}$, one has
$\ell_i=(1-\chi_{t_i})/2$ in $\F_p$.
Write $[X^0]$ for the coefficient at $X^0$ in $\F_p[H]$.
Zero-sum freeness gives the identity
\[
 \delta_0(x)=[X^0]\prod_{i=1}^L
       \bigl(1+\ell_i(x)(X^{h_i}-1)\bigr).
\]
Indeed, the product is $X^{\sum_i\ell_i(x)h_i}$, and its coefficient
at $X^0$ is one precisely when $x=0$.
Expand the product. Every term involving more than $b$ factors
$X^{h_i}-1$ vanishes by $I^{b+1}=0$. Since
$\ell_i=(1-\chi_{t_i})/2$, the Fourier support of the remaining
function is contained in
\[
 \left\{\sum_{i\in J}t_i:J\subseteq\{1,\ldots,L\},\ |J|\le b\right\},
\]
which has at most $\Vb{b}{L}$ elements. On the other hand, every
Fourier coefficient of $\delta_0$ equals $2^{-d}\ne0$ in $\F_p$.
Consequently $2^{L-r}\le2^d\le \Vb{b}{L}$, as required.
For fixed $r,b$, the exponential term eventually exceeds the polynomial
$\Vb{b}{L}$, so the minimum in \eqref{eq:finite-upper} exists.
\end{proof}

More precise lower bounds on the size of a covering code can be inserted
in the same way. The transfer uses the covering radius of the particular
code $C(S)$, and it allows zero columns and repeated columns in the
projection matrix. Standard covering-code results, including those of
Graham and Sloane~\cite{GS}, are therefore available after their individual
hypotheses have been checked.

For later comparison with the standard lower bound, write an odd-order
group as $H\cong\bigoplus_{j=1}^s C_{m_j}$ in invariant-factor form.
Pairing its cyclic factors with $s$ binary factors gives, for $r\ge s$,
\begin{equation}\label{eq:Dstar-general}
 \Ds(H\oplus C_2^r)=r+1+2\sum_{j=1}^s(m_j-1).
\end{equation}

\subsection{Simplex constructions}
The next construction uses only that multiplication by a power of two is
an automorphism of $H$.

\begin{proposition}[Simplex blocks]\label{prop:simplex}
Let $H$ be a nontrivial finite abelian group of odd order, and put
$b=\D(H)-1$. For any positive integers $m_1,\ldots,m_b$, set
\[
 r_0=\sum_{j=1}^b(2^{m_j}-1-m_j).
\]
For every $r\ge r_0$,
\begin{equation}\label{eq:simplex-general}
 \D(H\oplus C_2^r)\ge r+1+\sum_{j=1}^b m_j.
\end{equation}
In particular, if $r\ge b(2^m-1-m)$, then
\begin{equation}\label{eq:simplex-blocks}
 \D(H\oplus C_2^r)\ge r+bm+1.
\end{equation}
\end{proposition}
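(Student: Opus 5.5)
The plan is to build, via part (2) of Proposition~\ref{prop:code}, a binary code $V$ of dimension $d=\sum_j m_j$ and length $L$ with $L-d=r_0$, together with labels $h_i\in H$ such that $\sigma_h^{-1}(0)\cap V=\{0\}$. Proposition~\ref{prop:code} then yields a zero-sum-free sequence of length $r+d$ over $H\oplus C_2^r$ for every $r\ge r_0$. This gives $\D(H\oplus C_2^r)\ge r+d+1=r+1+\sum_j m_j$, which is \eqref{eq:simplex-general}. Setting all $m_j=m$ gives \eqref{eq:simplex-blocks}.

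\emph{The labels.} Since $b=\D(H)-1$, fix a zero-sum-free sequence $g_1,\ldots,g_b$ in $H$; the definition of $\D$ supplies one of length $b$. Because $|H|$ is odd, multiplication by $2$ is an automorphism of $H$. So for each $j$ there is a unique $c_j\in H$ with $2^{m_j-1}c_j=g_j$.

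\emph{The code.} Block $j$ is the simplex code of dimension $m_j$. Its coordinates are indexed by the nonzero $u\in\F_2^{m_j}$, and its codewords are $(u\cdot y)_{u\ne0}$ for $y\in\F_2^{m_j}$. This block has length $2^{m_j}-1$ and dimension $m_j$, since the map $y\mapsto(u\cdot y)_u$ is injective. Take $V$ to be the direct sum of the $b$ blocks, and give every coordinate of block $j$ the label $c_j$. Then $L=\sum_j(2^{m_j}-1)$ and $d=\sum_j m_j$, so $L-d=r_0$ as required.

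\emph{The key computation.} For $y\ne0$, exactly $2^{m_j-1}$ nonzero $u$ satisfy $u\cdot y=1$. Hence the weighted sum over block $j$ equals $2^{m_j-1}c_j=g_j$ when $y_j\ne0$, and equals $0$ when $y_j=0$. For $x=(y_1,\ldots,y_b)\in V$, let $J=\{j:y_j\ne0\}$. Then
\[
\sigma_h(x)=\sum_{j\in J}g_j .
\]
This sum is nonzero whenever $J\ne\emptyset$, because $(g_j)$ is zero-sum free. So $\sigma_h^{-1}(0)\cap V=\{0\}$, as needed.

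\emph{Where care is needed.} The step I expect to need the most care is this point-count identity. It is the only place where the odd order of $H$ enters: it is what makes the rescaling by $2^{-(m_j-1)}$ possible. The identity converts each block into a single "indicator" copy of $g_j$. After that, the rest is the bookkeeping of Proposition~\ref{prop:code}.
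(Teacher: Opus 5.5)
Your proposal is correct and follows the paper's own proof essentially verbatim. Both use a direct sum of $b$ simplex blocks with block-$j$ labels $2^{-(m_j-1)}g_j$, so that each nonzero block contributes exactly $g_j$. The only difference is that the paper writes out the parity-check and padding step directly, via a quotient map with kernel $\mathcal K$ and appended basis vectors, instead of citing Proposition~\ref{prop:code}.
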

\begin{proof}
Choose a zero-sum-free sequence $h_1,\ldots,h_b$ in $H$. Let $\mathcal S_m$
be the binary simplex code whose generator matrix has as columns all
nonzero vectors of $\F_2^m$. Its length is $2^m-1$, its dimension is $m$,
and each nonzero word has weight $2^{m-1}$.

Take the direct sum $\mathcal K=\bigoplus_{j=1}^b\mathcal S_{m_j}$, of
length $N=\sum_j(2^{m_j}-1)$ and dimension $\sum_j m_j$. Choose a
quotient map $A:\F_2^N\to\F_2^{r_0}$ with kernel $\mathcal K$.
For every position in block $j$, use the $H$-label $2^{-(m_j-1)}h_j$,
where the inverse power denotes the inverse automorphism of $H$.
Pair these labels with the columns of $A$.

A selection whose binary sum is zero has its indicator vector in
$\mathcal K$. Each nonzero block contributes exactly $h_j$ to its
$H$-sum. If the selection is nonempty, this is a nonempty subsequence
sum of $h_1,\ldots,h_b$, hence is nonzero. Thus the resulting sequence
is zero-sum free. Appending $r-r_0$ new independent binary basis
vectors, with zero $H$-labels, preserves this property. Its total length
is $N+r-r_0=r+\sum_j m_j$.
\end{proof}

For $r\ge b$, the choice $m=\lfloor\log_2(1+r/b)\rfloor$ in
\eqref{eq:simplex-blocks} gives
\begin{equation}\label{eq:simplex-rounded}
 \D(H\oplus C_2^r)
 \ge r+b\floor{\log_2(1+r/b)}+1
 \ge r+b\log_2(r/b)-b+1.
\end{equation}

\subsection{Uniform estimates}
\begin{theorem}[Uniform second-order estimate]\label{thm:uniform}
Let $H$ be any nontrivial finite abelian $p$-group, where $p$ is odd, and put
$b=\D(H)-1$ and $G=H\oplus C_2^r$. If $r\ge16b$, then
\begin{equation}\label{eq:uniform}
 \D(G)=\Ds(G)+b\log_2(r/b)+\rho_H(r),\qquad -3b\le \rho_H(r)<0.
\end{equation}
More precisely,
\begin{equation}\label{eq:uniform-rounded}
 \D(G)\le\Ds(G)+\ceil{b\log_2(r/b)}-1.
\end{equation}
Consequently, for any sequence of such groups $H_r$, allowing their primes,
ranks, and cyclic orders to vary, if $b_r=\D(H_r)-1=o(r)$, then
\begin{equation}\label{eq:movingH}
 \D(H_r\oplus C_2^r)
 =\Ds(H_r\oplus C_2^r)+(1+o(1))b_r\log_2(r/b_r).
\end{equation}
Moreover, $\D(H_r\oplus C_2^r)\sim\Ds(H_r\oplus C_2^r)\sim r$.
\end{theorem}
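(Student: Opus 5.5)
The plan is to sandwich $\D(G)$ between the simplex lower bound \eqref{eq:simplex-rounded} and the sphere-covering upper bound \eqref{eq:finite-upper}, after first computing $\Ds(G)$ exactly. Since $H$ is a $p$-group, write $H\cong\bigoplus_{j=1}^sC_{m_j}$. Olson's theorem gives $b=\D(H)-1=\sum_j(m_j-1)$. Because $r\ge16b\ge s$, formula \eqref{eq:Dstar-general} yields
\[
 \Ds(G)=r+1+2b .
\]
So the claim is equivalent to a statement about $\D(G)-(r+1+2b)$.

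\emph{Lower bound.} Since $r\ge16b\ge b$, \eqref{eq:simplex-rounded} applies and gives
\[
 \D(G)\ge r+b\log_2(r/b)-b+1=\Ds(G)+b\log_2(r/b)-3b .
\]
This is exactly $\rho_H(r)\ge-3b$.

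\emph{Upper bound.} Put $k=\ceil{b\log_2(r/b)}$ and $L=\Ds(G)+k-1=r+2b+k$. By \eqref{eq:finite-upper}, it suffices to show $\Vb{b}{L}<2^{L-r}=4^b2^k$.

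I will use the standard estimate $\Vb{b}{L}\le(eL/b)^b$, valid for $1\le b\le L$, together with $2^k\ge(r/b)^b$. These reduce the inequality to $eL<4r$, that is, $L<(4/e)r\approx1.4715\,r$.

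To verify this, set $x=r/b\ge16$. The function $\log_2x/x$ is decreasing for $x\ge e$, so $b\log_2(r/b)\le r\log_2 16/16=r/4$. Also $2b\le r/8$. Hence
\[
 L\le r+r/8+r/4+1=1.375\,r+1<1.4715\,r,
\]
where the last step uses $r\ge16$.

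This gives \eqref{eq:uniform-rounded}. Since $\ceil y-1<y$, it also gives $\rho_H(r)<0$. The only step needing care is this numerical verification: choose the binomial estimate so that the constant $16$ suffices, and check that the rounding is strict.

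\emph{Varying groups.} If $b_r=o(r)$, then $r\ge16b_r$ for all large $r$, so \eqref{eq:uniform} applies. Moreover $\log_2(r/b_r)\to\infty$, so the error $3b_r$ is $o\bigl(b_r\log_2(r/b_r)\bigr)$, which gives \eqref{eq:movingH}.

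Finally, $\Ds=r+1+2b_r\sim r$. The correction satisfies $b_r\log_2(r/b_r)=r\cdot\frac{\log_2 x}{x}$ with $x=r/b_r\to\infty$, so it is $o(r)$. Hence $\D\sim r$ as well.
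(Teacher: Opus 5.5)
Your proposal is correct and follows the paper's proof essentially step for step: you use the same computation $\Ds(G)=r+2b+1$, the simplex lower bound \eqref{eq:simplex-rounded}, and the same choice $L=r+2b+\lceil b\log_2(r/b)\rceil$ with $\Vb{b}{L}\le(eL/b)^b$ fed into Corollary~\ref{cor:sphere}. The only difference is in the arithmetic check of $eL<4r$, which you get from $\log_2x/x\le1/4$ and $r\ge16$, whereas the paper uses $b\ge2$ to get $L/r\le45/32$ and then $e<11/4$.
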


\begin{proof}[Proof of Theorem~\ref{thm:uniform}]
Write $H=\bigoplus_{j=1}^s C_{p^{e_j}}$. By \eqref{eq:abelian-kappa},
$b=\sum_j(p^{e_j}-1)\ge2$. Since $s\le b/2\le r$,
\eqref{eq:Dstar-general} becomes
\begin{equation}\label{eq:Dstar-p}
 \Ds(H\oplus C_2^r)=r+2b+1.
\end{equation}
Subtracting this from \eqref{eq:simplex-rounded} proves the lower bound
in \eqref{eq:uniform}.

For the upper bound, put $u=r/b\ge16$ and
\[
 L=\ceil{r+b\log_2u+2b}.
\]
The inequality $b\ge2$ gives
\[
 \frac Lr\le1+\frac{\log_2u+5/2}{u}\le\frac{45}{32}.
\]
For the last inequality, $(\log_2u+5/2)/u$ is decreasing for $u\ge16$
and equals $13/32$ at $u=16$.
For $1\le b\le L$, setting $z=b/L\le1$ yields
\[
 z^b\Vb{b}{L}\le(1+z)^L\le e^b,
 \qquad \Vb{b}{L}\le(eL/b)^b.
\]
Using $e<11/4$, we obtain
\[
 \Vb{b}{L}
 <\left(\frac{495}{128}\,\frac rb\right)^b
 <(4r/b)^b\le2^{L-r}.
\]
Corollary~\ref{cor:sphere} gives $\D(H\oplus C_2^r)\le L$.
The rounding is exact:
\[
 L=r+2b+\ceil{b\log_2(r/b)}
  =\Ds(G)+\ceil{b\log_2(r/b)}-1.
\]
This proves \eqref{eq:uniform-rounded}. Since $\lceil y\rceil-1<y$
for every real $y$, it also gives $\rho_H(r)<0$ in \eqref{eq:uniform}.

For the asymptotic assertion, $u_r=r/b_r\to\infty$ and
$|\rho_{H_r}(r)|\le3b_r=o(b_r\log_2u_r)$, proving \eqref{eq:movingH}.
Furthermore, $b_r\log_2u_r/r=\log_2u_r/u_r\to0$ and
$(2b_r+1)/r\to0$, giving the stated first-order asymptotics.
\end{proof}

\begin{proof}[Proof of Theorem~\ref{thm:dichotomy}]
For fixed $H$, \eqref{eq:uniform} already proves
\eqref{eq:fixedH} with an explicitly bounded remainder. More precisely,
for $r\ge16b$ it gives
\[
 \D(H\oplus C_2^r)
 =\Ds(H\oplus C_2^r)+b\log_2r+R_H(r),
 \qquad -b\log_2b-3b\le R_H(r)<-b\log_2b.
\]
\end{proof}
The factorial in the sphere volume gives a sharper explicit upper bound.

\begin{proposition}[An explicit factorial bound]\label{prop:factorial}
Under the hypotheses of Theorem~\ref{thm:uniform}, assume only $r\ge3b$
and define
\[
 \varepsilon_b(r)=\frac{b(b\log_2r+3)}{r\ln2-b}.
\]
Then
\begin{equation}\label{eq:factorial}
 \D(H\oplus C_2^r)
 \le\Ds(H\oplus C_2^r)
 +\floor{b\log_2r-\log_2(b!)+\varepsilon_b(r)}-2b.
\end{equation}
For fixed $b$, the displayed correction $\varepsilon_b(r)$ tends to zero.
\end{proposition}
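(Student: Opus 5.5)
We derive \eqref{eq:factorial} from the sphere-covering criterion \eqref{eq:finite-upper} of Corollary~\ref{cor:sphere}, using the bound $\Vb{b}{L}\le L^b/b!\cdot(1+\text{small})$ in place of the cruder $(eL/b)^b$ used in Theorem~\ref{thm:uniform}. By \eqref{eq:Dstar-p}, $\Ds(G)=r+2b+1$. The claimed bound is therefore equivalent to $\D(G)\le r+1+\floor{b\log_2 r-\log_2(b!)+\varepsilon_b(r)}$.

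Put $y=b\log_2r-\log_2(b!)+\varepsilon_b(r)$ and $L=r+1+\floor{y}$. By \eqref{eq:finite-upper} it suffices to show $\Vb{b}{L}<2^{L-r}=2^{1+\floor y}$. Since $2^{1+\floor y}>2^{y}$, it is enough to prove $\Vb{b}{L}\le 2^{y}=r^b2^{\varepsilon_b(r)}/b!$.

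For the volume, use $\Vb{b}{L}=\sum_{j\le b}\binom Lj\le \frac{L^b}{b!}\sum_{j\ge0}(b/L)^j=\frac{L^b}{b!}\cdot\frac{1}{1-b/L}$, which holds because $\binom Lj\le L^j/j!$ and $b!/j!\le b^{b-j}$. Then
\[
 \log_2\bigl(b!\,\Vb{b}{L}/r^b\bigr)\le b\log_2(L/r)-\log_2(1-b/L).
\]
Write $L/r=1+\delta$ with $\delta=(1+\floor y)/r\le(1+b\log_2 r)/r$, using $\varepsilon_b(r)<\log_2(b!)$ or at least absorbing it. Apply $\ln(1+\delta)\le\delta$ and $-\ln(1-b/L)\le \frac{b/L}{1-b/L}=\frac{b}{L-b}\le\frac{b}{r-b}$. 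This gives an upper bound of the form
\[
 \frac{1}{\ln2}\Bigl(\frac{b(1+b\log_2r+\varepsilon)}{r}+\frac{b}{r-b}\Bigr).
\]
This must be shown to be at most $\varepsilon_b(r)=\frac{b(b\log_2r+3)}{r\ln2-b}$.

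\textbf{Main obstacle.} The main difficulty is that $\varepsilon$ appears on both sides, since $L$ depends on $y$. The denominator $r\ln2-b$ is exactly what solves the resulting fixed-point inequality. Suppose $\varepsilon\le\varepsilon_b(r)$ is put in the $\delta$ term, giving roughly $\varepsilon\ln2\cdot r\ge b(b\log_2r+3)+b\varepsilon$ up to the $b/(r-b)$ term. Rearranging to $\varepsilon(r\ln2-b)\ge b(b\log_2r+3)$ is equality at $\varepsilon=\varepsilon_b(r)$. The constant $3$ must absorb the $+1$ from $\delta$ and the $b/(r-b)\cdot r/b\le 3/2$ contribution, which uses $r\ge3b$. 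Checking this bookkeeping carefully is the one real step. The hypothesis $r\ge3b$ also ensures $r\ln2-b>0$ and $L>b$.

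Finally, for fixed $b$, $\varepsilon_b(r)=O(\log r/r)\to0$, which is immediate.
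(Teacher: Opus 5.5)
Your proposal is correct and follows the paper's proof essentially step for step. The shared steps are:
\begin{itemize}
\item the same choice $L=r+1+\lfloor y\rfloor$;
\item the same volume bound $\Vb{b}{L}\le L^b/(b!(1-b/L))$;
\item the same closing fixed-point identity $\varepsilon_b(r)=\frac{b}{r\ln2}\bigl(b\log_2r+\varepsilon_b(r)+3\bigr)$.
\end{itemize}
One small difference: you bound $-\ln(1-b/L)$ by $b/(r-b)\le 3b/(2r)$, where the paper uses $2b/r$; either fits under the constant $3$ when $r\ge3b$.

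Drop the aside ``$\varepsilon_b(r)<\log_2(b!)$''. It is false for small $r$ (e.g.\ $b=2$, $r=6$ gives $\varepsilon_2(6)\approx7.6>1$). It is also unnecessary, because your displayed bound already keeps $\varepsilon$ inside $\delta$, exactly as the paper does.
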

\begin{proof}
Put $A=b\log_2r-\log_2(b!)$, $\varepsilon=\varepsilon_b(r)$, and
$L=r+\lfloor A+\varepsilon\rfloor+1$. Let $c=L-r$.
We have $A\ge0$, $L\ge r\ge3b$, and
\[
 A+\varepsilon<c\le b\log_2r+\varepsilon+1.
\]
Since $b!/j!\le b^{b-j}$,
\[
 \Vb{b}{L}\le\frac{L^b}{b!}\sum_{j=0}^b(b/L)^{b-j}
 \le\frac{L^b}{b!(1-b/L)}.
\]
Write $\alpha=b/(r\ln2)<1$. The estimates
$\log_2(1+c/r)\le c/(r\ln2)$ and
$-\log_2(1-b/L)\le2b/(r\ln2)$ give
\[
 \log_2 \Vb{b}{L}
 \le A+\alpha c+2\alpha
 \le A+\alpha(b\log_2r+\varepsilon+3)
 =A+\varepsilon<c.
\]
Here $b/L\le1/3$, so the second logarithmic estimate follows from
$-\ln(1-x)\le2x$ for $0\le x\le1/3$.
Apply Corollary~\ref{cor:sphere}.
\end{proof}

\begin{example}
Let $s_r=\lfloor r/(2\log_2r)\rfloor$ and
\[
 G_r=C_6^{s_r}\oplus C_2^{r-s_r}.
\]
Its odd part is $C_3^{s_r}$, so $b_r=2s_r\sim r/\log_2r$. Therefore
\[
 \D(G_r)=\Ds(G_r)+(1+o(1))\frac{r\log_2\log_2r}{\log_2r},
 \qquad \Ds(G_r)=r+4s_r+1.
\]
This follows from the uniform theorem even though the rank of the odd
part tends to infinity.
\end{example}

\subsection{A uniform bound for $C_{2q}^r$}
The covering argument also applies when the odd and binary ranks grow
together.

\begin{corollary}\label{cor:uniform-ratio}
Let $q$ be an odd prime power, and let $\lambda_q>2(q-1)$ be the
unique solution of
\begin{equation}\label{eq:entropy-root}
 \lambda_q\left[1-H_2\!\left(\frac{q-1}{\lambda_q}\right)\right]=1,
\end{equation}
where $H_2(u)=-u\log_2u-(1-u)\log_2(1-u)$. For every integer
$r\ge1$,
\begin{equation}\label{eq:uniform-ratio}
 \D(C_{2q}^r)\le\floor{\lambda_q r}+1
 <1.4162\,\Ds(C_{2q}^r).
\end{equation}
\end{corollary}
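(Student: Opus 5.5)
We derive both inequalities in \eqref{eq:uniform-ratio} from Corollary~\ref{cor:sphere} applied with $H=C_q^r$, combined with the standard entropy bound for Hamming balls. Write $q=p^e$. Then $C_{2q}^r\cong C_q^r\oplus C_2^r$ and $H=C_q^r$ is an abelian $p$-group with $p$ odd. By \eqref{eq:abelian-kappa}, $b=\D(H)-1=c\,r$ with $c=q-1$. Also $\Ds(C_{2q}^r)=1+r(2q-1)$, and \eqref{eq:finite-upper} reduces the first inequality to
\[
 \Vb{cr}{L}<2^{L-r}\qquad\text{for }L=\floor{\lambda_q r}+1.
\]

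\textbf{Step 1: the function defining $\lambda_q$.} Put $g(t)=t\bigl[1-H_2(c/t)\bigr]$ for $t>2c$, and write $u=c/t\in(0,1/2)$. Using $H_2'(u)=\log_2((1-u)/u)$, a short differentiation gives
\[
 g'(t)=1-H_2(u)+uH_2'(u)=1+\log_2(1-u)>0 .
\]
Moreover $g(t)\to0$ as $t\downarrow2c$, since $H_2(1/2)=1$, and $g(t)\to\infty$ as $t\to\infty$. Hence $\lambda_q$ exists, is unique, and satisfies $g(t)>1$ for every $t>\lambda_q$.

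\textbf{Step 2: the sphere estimate.} Since $L>\lambda_q r>2cr$, the ratio $\alpha=cr/L$ lies below $1/2$. The standard bound $\Vb{\alpha L}{L}\le2^{LH_2(\alpha)}$ for $\alpha\le1/2$ (proved via $\alpha^{\alpha L}(1-\alpha)^{(1-\alpha)L}\Vb{\alpha L}{L}\le1$) then gives
\[
 \log_2\Vb{cr}{L}\le L\,H_2(cr/L)=L-r\,g(L/r).
\]
Because $L/r>\lambda_q$, Step~1 yields $g(L/r)>1$. Therefore $\log_2\Vb{cr}{L}<L-r$, and Corollary~\ref{cor:sphere} gives $\D(C_{2q}^r)\le L$.

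\textbf{Step 3: the numerical ratio.} The constant term is harmless, since $1<1.4162$. It therefore suffices to show
\[
 \lambda_q<1.4162\,(2q-1)
\]
for every odd prime power $q$. By the monotonicity in Step~1, this is equivalent to $g(t_c)>1$ at $t_c=1.4162(2c+1)$, with $c=q-1\ge2$.

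At $t_c$ we have $u_c=c/t_c=\frac{c}{1.4162(2c+1)}$. This is increasing in $c$ and tends to $1/2.8324\approx0.3531<1/2$. Consequently $1-H_2(u_c)$ is decreasing in $c$ but bounded below by $1-H_2(1/2.8324)>0$. Meanwhile $t_c$ grows linearly, so $g(t_c)\to\infty$.

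For the explicit range, I would show that $g(t_c)$ is increasing in $c$. The cleanest route is to treat $c$ as a real variable and differentiate $t_c\bigl[1-H_2(u_c)\bigr]$; failing that, I would bound it below by $t_c\bigl[1-H_2(1/2.8324)\bigr]$, which exceeds $1$ once $c\ge2$ or $c\ge3$. The remaining small case $q=3$, $c=2$, should then be checked numerically. Here $t_2=7.081$ and $u_2=2/7.081$, and the constant $1.4162$ was presumably chosen so that $\lambda_3/5$ sits just below it.

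\textbf{Main obstacle.} Step~3 is the delicate part: one must check that the extremal case is $q=3$ and do the tight numerical evaluation of $\lambda_3$ there. Steps~1 and~2 are routine.
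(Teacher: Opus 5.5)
Your Steps 1 and 2 are correct and coincide with the paper's proof of the first inequality: the same monotone function $g=\Phi_q$, the same entropy bound $\Vb{b}{L}\le2^{LH_2(b/L)}$, and Corollary~\ref{cor:sphere} with $H=C_q^r$. The gap is in Step~3, which carries all the content of the constant $1.4162$, and none of its ingredients is actually established.

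\textbf{The fallback bound is miscomputed.} Write $\theta=1.4162$. Then $1-H_2(1/2.8324)\approx0.0632$, so $t_c[1-H_2(1/2.8324)]>1$ needs $t_c>15.8$, i.e.\ $c\ge6$ ($q\ge7$). At $q=3$ and $q=5$ it gives only about $0.45$ and $0.81$. The fallback therefore also leaves $q=5$ uncovered. The inequality does hold there ($g(t_4)\approx1.31$), but it needs its own check.

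\textbf{The ``cleanest route'' is not clean.} Put $G(c)=t_c[1-H_2(u_c)]$. Then
$G'(c)=2\theta[1-H_2(u_c)]-H_2'(u_c)/(2c+1)$, which is positive exactly when $2G(c)>H_2'(u_c)$. So the sign of the derivative depends on the size of $G$ itself, and you would need a bootstrap from the endpoint.

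The paper removes this circularity by clearing the denominator. With $a=(q-1)/(2q-1)\in[2/5,1/2)$, one has $u_c=a/\theta$ and $G-1=F(a)/(1-2a)$, where $F(a)=\theta[1-H_2(a/\theta)]-(1-2a)$. Here $F'(a)=2-\log_2((\theta-a)/a)>0$ unconditionally, because $\theta<2\le5a$. Hence the single endpoint $a=2/5$ (that is, $q=3$) suffices.

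\textbf{The endpoint is not checked.} You correctly identified $q=3$ as extremal, but that case is razor-thin: $g(7.081)\approx1.0001$, equivalently $F(2/5)\approx2\times10^{-5}$. ``Checked numerically'' must therefore be a certified computation. The paper makes it exact by exponentiating $F(2/5)>0$ into the integer comparison $2^{14081}5^{6000}5081^{5081}>7081^{7081}$. With this reparametrization and a certified endpoint, your Step~3 would be complete.
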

\begin{proof}
Put $b=r(q-1)$ and $\Phi_q(x)=x[1-H_2((q-1)/x)]$. On
$x>2(q-1)$, the derivative
$\Phi_q'(x)=\log_2(2(1-(q-1)/x))$ is positive, so $\Phi_q$
increases from zero to infinity. This proves existence and uniqueness
in \eqref{eq:entropy-root}. Take $L=\floor{\lambda_qr}+1>2b$ and
$z=b/(L-b)$. Then
\[
 \Vb{b}{L}\le z^{-b}(1+z)^L=2^{L H_2(b/L)}<2^{L-r},
\]
where the strict inequality follows from
$L[1-H_2(b/L)]=r\Phi_q(L/r)>r$. Corollary~\ref{cor:sphere}, with
$H=C_q^r$, proves the first bound in \eqref{eq:uniform-ratio}.

For the numerical constant, put $c=7081/5000=1.4162$ and
$a=(q-1)/(2q-1)\in[2/5,1/2)$. Define
$F_c(a)=c[1-H_2(a/c)]-(1-2a)$.
Its derivative is $F_c'(a)=2-\log_2((c-a)/a)>0$, because $c<5a$.
The endpoint inequality $F_c(2/5)>0$ is equivalent to the exact
integer comparison
\[
 2^{14081}5^{6000}5081^{5081}>7081^{7081}.
\]
Thus $\Phi_q(c(2q-1))>1$ and $\lambda_q<c(2q-1)$.
Since $\Ds(C_{2q}^r)=r(2q-1)+1$ and $c>1$, we obtain
$\floor{\lambda_qr}+1\le\lambda_qr+1<c\,\Ds(C_{2q}^r)$.
\end{proof}

\section{An exact Fourier formulation}\label{sec:transfer}

The code correspondence has a converse formulation that does not require
$H$ to be a $p$-group. Let $H$ be a nontrivial finite abelian group of odd
order. For $f:\F_2^d\to H$, define
\[
 \widehat f(t)=2^{-d}\sum_{x\in\F_2^d}(-1)^{\langle t,x\rangle}f(x),
 \qquad
 f(x)=\sum_{t\in\F_2^d}\widehat f(t)(-1)^{\langle t,x\rangle}.
\]
These are identities in the additive group $H$: multiplication by
$2^{-d}$ means applying the inverse of multiplication by $2^d$.

\begin{definition}
For $d\ge1$, let $\mu_H(d)$ be the minimum number of nonzero,
nonconstant Fourier coefficients of a function $f:\F_2^d\to H$ such that
\[
 f(0)=0,\qquad f(x)\ne0\quad(x\ne0).
\]
Set $\mu_H(0)=0$.
\end{definition}

The minimum exists: there are finitely many functions, and the function
taking a fixed nonzero value on $\F_2^d\setminus\{0\}$ is admissible.

\begin{theorem}[Exact transfer]\label{thm:exact-transfer}
For every nontrivial finite abelian odd-order group $H$ and every $r\ge0$,
\begin{equation}\label{eq:exact-transfer}
 \D(H\oplus C_2^r)
 =r+1+\max\{d\ge0:\mu_H(d)-d\le r\}.
\end{equation}
\end{theorem}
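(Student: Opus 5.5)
The plan is to reduce \eqref{eq:exact-transfer} to Proposition~\ref{prop:code}, by showing that for each $d$ the least \emph{redundancy} $L-d$ of a code carrying a weighted sum with unique zero equals $\mu_H(d)-d$. Define $\rho(d)$ as the minimum of $L-d$ over all pairs consisting of a code $V\le\F_2^L$ with $\dim V=d$ and labels $h_1,\ldots,h_L\in H$ such that $\sigma_h^{-1}(0)\cap V=\{0\}$. The proof then splits into two halves:
\[
\D(H\oplus C_2^r)-1=r+\max\{d:\rho(d)\le r\}
\qquad\text{and}\qquad
\rho(d)=\mu_H(d)-d .
\]

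\emph{Step 1: the first identity.} Suppose there is a zero-sum-free sequence of length $L$ over $H\oplus C_2^r$. By Proposition~\ref{prop:code}(1), its selection kernel $V$ has dimension $d\ge L-r$ and the weighted sum has a unique zero on $V$. Hence $\rho(d)\le L-d\le r$ and $L\le r+d$. Conversely, suppose $\rho(d)\le r$. Then Proposition~\ref{prop:code}(2), together with padding, gives a zero-sum-free sequence of length exactly $r+d$. The set $\{d:\rho(d)\le r\}$ is bounded, since $\D$ is finite; for instance, Corollary~\ref{cor:sphere} bounds $\D$ when $H$ is a $p$-group, and for general odd $H$ one may simply quote the finiteness of $\D$. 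The case $d=0$ is included, via the empty code, so the maximum exists. This proves the first identity.

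\emph{Step 2: $\rho(d)\ge\mu_H(d)-d$.} Take $V$ of dimension $d$ and parametrize it injectively and linearly by $x\in\F_2^d$, with coordinates $\ell_i(x)=\langle t_i,x\rangle$. Put $f(x)=\sigma_h(x)=\sum_i\ell_i(x)h_i$. Then $f(0)=0$, and $f(x)\ne0$ for $x\ne0$. Using $\ell_i=(1-\chi_{t_i})/2$, which is legitimate because $2$ acts invertibly on $H$, we get
\[
f=\tfrac12\sum_i h_i-\tfrac12\sum_i h_i\chi_{t_i}.
\]
So every nonconstant Fourier coefficient is supported on the set of distinct nonzero $t_i$, and there are at most $L$ of these. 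Hence $\mu_H(d)\le L$.

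\emph{Step 3: $\rho(d)\le\mu_H(d)-d$.} Let $f$ be admissible, and let $T$ be its set of nonzero nonconstant frequencies, with $|T|=\mu_H(d)$. Since $f(0)=0$, the constant coefficient equals $-\sum_{t\in T}\widehat f(t)$. Therefore
\[
f=\sum_{t\in T}\widehat f(t)(\chi_t-1)=\sum_{t\in T}\bigl(-2\widehat f(t)\bigr)\ell_t,
\qquad \ell_t(x)=\langle t,x\rangle .
\]
Take labels $h_t=-2\widehat f(t)$ and the code $V=\{(\ell_t(x))_{t\in T}:x\in\F_2^d\}$, which has length $\mu_H(d)$. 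The key point is that this parametrization is injective, i.e.\ $T$ spans $\F_2^d$. If not, there is a nonzero $x$ annihilated by $T$. Then $f(x)=f(0)=0$, contradicting admissibility. Thus $\dim V=d$, the weighted sum is $f$ and has a unique zero, and the redundancy is $\mu_H(d)-d$. The degenerate case $d=0$ is consistent with $\mu_H(0)=0$.

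I expect the main subtlety to lie in the bookkeeping rather than in any deep step. One point is handling zero or repeated columns $t_i$ in Step 2: these only merge or remove frequencies, so the inequality survives. The other is the spanning argument in Step 3, which guarantees that the dimension is exactly $d$. Step 1 needs no monotonicity of $\rho$, because the maximum is taken over the feasible set directly.
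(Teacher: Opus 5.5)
Your proposal is correct and follows essentially the same route as the paper. Your Step~2 is the paper's upper bound: parametrize the selection kernel, expand $\ell_i=(1-\chi_{t_i})/2$, and count at most $L$ nonconstant frequencies. Your Step~3 is the paper's construction from the Fourier support, including the spanning argument for injectivity. Packaging these through the redundancy function $\rho(d)$ and citing Proposition~\ref{prop:code}(2), rather than writing out the quotient map $\F_2^s\to\F_2^s/C_f$ explicitly, is only an organizational difference.
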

\begin{proof}
Take a zero-sum-free sequence $((h_i,v_i))_{i=1}^L$ and let
$A:\F_2^L\to\F_2^r$ have columns $v_i$. Put $d=\dim\ker A$ and
parameterize $\ker A$ injectively by $x\in\F_2^d$. Write its coordinate
functions as $\ell_i(x)=\langle t_i,x\rangle\in\{0,1\}$. Then
\[
 f(x)=\sum_{i=1}^L h_i\ell_i(x)
\]
has zero set exactly $\{0\}$. Since
$\ell_i=(1-(-1)^{\langle t_i,x\rangle})/2$, it has at most $L$
nonzero, nonconstant Fourier coefficients. Thus
\[
 \mu_H(d)-d\le L-d=\rk A\le r,\qquad L\le r+d.
\]
This gives the upper bound in \eqref{eq:exact-transfer}.

Conversely, choose an admissible $f$ with $s=\mu_H(d)$ nonconstant
coefficients and let
$E=\{t\ne0:\widehat f(t)\ne0\}$. Since $f(0)=0$,
\[
 f(x)=\sum_{t\in E}h_t\frac{1-(-1)^{\langle t,x\rangle}}2,
 \qquad h_t=-2\widehat f(t).
\]
The set $E$ spans $\F_2^d$: a nonzero vector orthogonal to all of $E$
would give another zero of $f$. Hence the evaluation map
\[
 x\longmapsto(\langle t,x\rangle)_{t\in E}
\]
is injective. Its image $C_f\subseteq\F_2^s$ has dimension $d$.
Let $v_t$ be the images of the standard basis vectors under the quotient
map $\F_2^s\to\F_2^s/C_f\cong\F_2^{s-d}$.
The sequence $((h_t,v_t))_{t\in E}$ is zero-sum free: a nonempty
selection with binary sum zero is a nonzero evaluation vector of some
$x$, and its $H$-sum equals $f(x)\ne0$.

If $s-d\le r$, append $r-(s-d)$ independent binary basis vectors. This
gives a zero-sum-free sequence of length $r+d$, proving the lower bound.
It also proves that the maximum is finite, since
$r+d\le\D(H\oplus C_2^r)-1\le |H|2^r-1$ for each feasible $d$.
The case $d=0$ uses the $r$ independent binary basis vectors directly.
\end{proof}

For $H=C_3=\F_3$, it is convenient to include the constant Fourier
coefficient. Define
\[
 \nu(d)=\min\{|\supp\widehat f|:f:\F_2^d\to\F_3,
 f^{-1}(0)=\{0\}\}\qquad(d\ge1).
\]
Multiplying $f$ by a character preserves its zero set and translates its
Fourier support. By moving a nonzero coefficient to the constant
direction, one obtains
\begin{equation}\label{eq:mu-nu}
 \mu_{C_3}(d)=\nu(d)-1.
\end{equation}
The reverse inequality follows because the full support has at most one
more element than the nonconstant support. Consequently,
\begin{equation}\label{eq:C3transfer}
 \D(C_3\oplus C_2^r)
 =r+1+\max\bigl(\{0\}\cup
 \{d\ge1:\nu(d)-1-d\le r\}\bigr).
\end{equation}
This is an exact structural formula. Evaluating its extremal parameter
for every dimension is a further problem. Its sparse-representation
language is related to the weak-OR formulation in~\cite[Section~4.3]{BGG}.

\section{Exact intervals for the odd part \texorpdfstring{$C_3$}{C3}}\label{sec:C3}

\subsection{Signed Fourier coefficients}
Let $f:\F_2^d\to\F_3$ have zero set $\{0\}$, and write
\[
 f(x)=\sum_{u\in S}a_u(-1)^{\langle u,x\rangle},
 \qquad a_u\in\{1,-1\},\qquad n=|S|.
\]
The set $S$ includes the constant direction when its coefficient is
nonzero. Let $P$ and $Q$ count coefficients $+1$ and $-1$, respectively.

\begin{lemma}[Signed pair bound]\label{lem:signed-pairs}
The support satisfies
\begin{equation}\label{eq:residue}
 n\equiv1-(-1)^d\pmod3,\qquad P-Q\equiv0\pmod3,
\end{equation}
and
\begin{equation}\label{eq:pair-even}
 \binom n2\ge2^d-1.
\end{equation}
If $d$ is odd, then also
\begin{equation}\label{eq:pair-odd}
 2(2^d-1)\le\binom n2+PQ,
 \qquad
 2^d\le1+\frac{3n^2-2n-(P-Q)^2}{8}.
\end{equation}
\end{lemma}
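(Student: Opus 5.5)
The plan is to exploit the fact that $f$ takes values in $\F_3$ and vanishes only at $0$. This gives $f(x)^2=1$ for $x\ne0$ and $f(0)^2=0$, that is, the exact identity
\[
 f^2=1-\delta_0 \qquad\text{as functions }\F_2^d\to\F_3 .
\]
I would then compare Fourier coefficients of both sides. On the right, $\widehat{\delta_0}(u)=2^{-d}=(-1)^d$ in $\F_3$ for every $u$, since $2\equiv-1$. So the constant coefficient of $1-\delta_0$ is $1-(-1)^d$, and every nonconstant coefficient equals $-(-1)^d$, which is nonzero in $\F_3$. On the left,
\[
 f^2=\sum_{u,v\in S}a_ua_v\chi_{u+v}=n\,\chi_0+\sum_{u\ne v}a_ua_v\chi_{u+v},
\]
using $a_u^2=1$ and $u+v=0\iff u=v$.

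\emph{Residues.} Matching constant coefficients gives $n\equiv1-(-1)^d\pmod 3$. Evaluating at $x=0$ gives $0=f(0)=\sum_u a_u=P-Q$ in $\F_3$. This proves \eqref{eq:residue}.

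\emph{Pair bound.} For $w\ne0$, let $c_w=\sum a_ua_v$ over unordered pairs $\{u,v\}\subseteq S$ with $u+v=w$, and let $N_w$ be the number of such pairs. Matching the coefficient at $w$ gives $2c_w=-(-1)^d\ne0$. Hence $N_w\ge1$ for every nonzero $w$. Each unordered pair of $S$ contributes to exactly one $w$, so $\sum_{w\ne0}N_w=\binom n2$. This gives $\binom n2\ge 2^d-1$, which is \eqref{eq:pair-even}.

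\emph{Odd $d$.} Now $2c_w=1$, so $c_w\equiv-1\pmod3$. Let $M_w$ be the number of pairs counted in $N_w$ whose two coefficients have opposite signs. I claim $N_w+M_w\ge2$ for each $w\ne0$:
\begin{itemize}
\item if $N_w\ge2$, this is immediate;
\item if $N_w=1$, then $c_w=a_ua_v$ must be $-1$, so that single pair is mixed and $M_w=1$.
\end{itemize}
Summing over $w\ne0$, with $\sum_w M_w=PQ$, gives $2(2^d-1)\le\binom n2+PQ$.

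The second form of \eqref{eq:pair-odd} is algebra. Use $4PQ=(P+Q)^2-(P-Q)^2=n^2-(P-Q)^2$ to get
\[
 \binom n2+PQ=\frac{3n^2-2n-(P-Q)^2}{4},
\]
then divide by $2$ and rearrange.

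The only delicate point is the case analysis for odd $d$. It works because the target coefficient is $-1$ rather than $+1$, so a lone same-sign pair is excluded. For even $d$ the target is $+1$, a lone same-sign pair is allowed, and no analogous refinement is available, which matches the statement.
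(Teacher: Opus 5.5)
Your proof is correct and follows the paper's argument. You use the identity $f^2=1-\delta_0$ over $\F_3$, match the constant and nonconstant Fourier coefficients, and observe that for odd $d$ a direction represented by a single pair must come from an opposite-sign pair. The only difference is bookkeeping: you sum the per-direction inequality $N_w+M_w\ge2$ directly, whereas the paper counts the number $U$ of uniquely represented directions and then uses $U\le PQ$; the two are equivalent.
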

\begin{proof}
As an $\F_3$-valued function, $f^2=1-\delta_0$. The constant Fourier
coefficient of the left side is $\sum_{u\in S}a_u^2=n$; that of the
right side is $1-2^{-d}=1-(-1)^d$ in $\F_3$. Also,
$f(0)=\sum_u a_u=P-Q=0$ in $\F_3$. This proves
\eqref{eq:residue}.

For each $t\ne0$, let $m_t$ count unordered pairs of distinct elements
of $S$ with sum $t$. The nonconstant coefficients of $f^2=1-\delta_0$
give
\begin{equation}\label{eq:pair-convolution}
 2\sum_{\substack{\{u,v\}\subseteq S\\u+v=t}}a_ua_v=-2^{-d}
 \quad\text{in }\F_3.
\end{equation}
In particular $m_t\ge1$, and summing gives \eqref{eq:pair-even}.
If $d$ is odd, the signed sum in \eqref{eq:pair-convolution} is $-1$.
Thus a direction represented by a unique pair must be represented by
a pair of opposite signs.

Let $U$ be the number of uniquely represented nonzero directions. Then
\[
 \binom n2=\sum_{t\ne0}m_t
 \ge U+2(2^d-1-U),\qquad U\le PQ.
\]
These give the first inequality in \eqref{eq:pair-odd}.
Substituting $PQ=(n^2-(P-Q)^2)/4$ gives the second.
\end{proof}

\begin{corollary}[Odd-dimensional support bound]\label{cor:odd-support}
For every integer $s\ge1$,
\begin{equation}\label{eq:odd-support}
 \nu(2s+1)>\frac4{\sqrt3}\,2^s.
\end{equation}
\end{corollary}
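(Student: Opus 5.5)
We apply Lemma~\ref{lem:signed-pairs} with $d=2s+1$ odd. Let $f:\F_2^d\to\F_3$ have zero set $\{0\}$ and Fourier support of size $n=\nu(d)$. Write it as in that lemma, with $P$ positive and $Q$ negative coefficients. The second inequality in \eqref{eq:pair-odd}, after discarding the nonnegative term $(P-Q)^2$, gives
\[
 2^{2s+1}\le1+\frac{3n^2-2n}{8}.
\]
Multiplying by $8$ gives $2^{2s+4}\le 3n^2-2n+8$.

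The goal is $n>\frac4{\sqrt3}2^s$, that is, $3n^2>2^{2s+4}$. From the display, $3n^2\ge2^{2s+4}+2n-8$. So it suffices to know that $2n>8$, i.e.\ $n\ge5$. This would give $3n^2>2^{2s+4}$ strictly, and hence \eqref{eq:odd-support}.

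It remains to show $n\ge5$ for $s\ge1$, so $d\ge3$. There are two available bounds.
\begin{itemize}
\item The pair bound \eqref{eq:pair-even} gives $\binom n2\ge2^d-1\ge7$, so $n\ge5$ directly, since $\binom42=6<7$.
\item Lemma~\ref{lem:uncertainty} gives the cruder bound $n\ge2^d/|\supp f|=2^d/(2^d-1)$, i.e.\ $n\ge2$, which is not enough.
\end{itemize}
The pair bound therefore handles the small cases, and the argument is uniform in $s\ge1$.

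Even without the $2n-8$ slack, the residue condition \eqref{eq:residue} could sharpen things: for odd $d$ it forces $n\equiv2\pmod 3$. One might also use $(P-Q)^2\ge0$ with equality only when $P=Q$. This is not needed, however.

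The only delicate point is strictness. The inequality $3n^2\ge 2^{2s+4}+2n-8$ is strict exactly when $n>4$. I expect the main (minor) obstacle to be confirming that $n\ge5$ holds for every $s\ge1$, which \eqref{eq:pair-even} supplies. Since $\nu(2s+1)$ is the minimum of $n$ over admissible $f$, the bound holds for the minimizer, giving \eqref{eq:odd-support}.
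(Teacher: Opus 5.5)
Your proof is correct and is essentially the paper's argument. Both get $n\ge5$ from \eqref{eq:pair-even}, then apply the second inequality in \eqref{eq:pair-odd} with $(P-Q)^2$ dropped. The paper phrases it as a contradiction from assuming $n\le(4/\sqrt3)2^s$, whereas you argue directly via $3n^2\ge2^{2s+4}+2n-8>2^{2s+4}$.
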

\begin{proof}
Write $n=\nu(2s+1)$. Equation~\eqref{eq:pair-even} gives $n\ge5$.
If $n\le(4/\sqrt3)2^s$, then \eqref{eq:pair-odd} would imply
\[
 2^{2s+1}\le1+\frac{3n^2-2n}{8}
 \le2^{2s+1}+1-\frac n4<2^{2s+1},
\]
a contradiction.
\end{proof}

If a zero-sum-free sequence over $C_3\oplus C_2^r$ has length $L$ and
$L-r\ge d\ge1$, choose a $d$-dimensional subspace of the binary
projection kernel. Its label-sum function is admissible, as in
Theorem~\ref{thm:exact-transfer}, and its full Fourier support has size
at most $L+1$. We will use this even when the full kernel is larger
than $d$.

\subsection{Proof of the explicit intervals}
\begin{proof}[Proof of Theorem~\ref{thm:windows}]
For the lower bound, take two simplex blocks of dimension $m$ in
Proposition~\ref{prop:simplex}. Their binary quotient dimension is
$a_m=2(2^m-1-m)$. Hence for every $r\ge a_m$,
\[
 \D(C_3\oplus C_2^r)\ge r+2m+1.
\]

Suppose $m\ge2$ and a zero-sum-free sequence of length $L=r+2m+1$
exists with $r\le z_m$. Put $T=2^m$. Restrict its label-sum function
to a $(2m+1)$-dimensional kernel subspace. It has Fourier support size
$n\le L+1\le4T/\sqrt3$, contradicting
Corollary~\ref{cor:odd-support}. This proves the upper bound. The case $m=1$ is $r=0$ and follows
from $\D(C_3)=3$.
\end{proof}

The isolated values \eqref{eq:special-ranks} also have a short proof
using only the sphere volume. At $L=2^{m+1}-1$,
\[
 \Vb{2}{L}=2^{2m+1}-2^m+1<2^{2m+1}=2^{L-a_m}.
\]
This gives the upper bound directly from Corollary~\ref{cor:sphere}.

\subsection{An integer refinement}
For odd $d\ge1$, let $\lambda_d$ be the least positive integer $n$
satisfying
\begin{equation}\label{eq:lambda}
 n\equiv2\pmod3,\qquad
 3n^2-2n-9\ind_{\{n\text{ odd}\}}\ge8(2^d-1).
\end{equation}
This is an explicit arithmetic bound, not an unknown Fourier extremum.
For instance, set
\[
 R_e(d)=\frac{1+\sqrt{24\cdot2^d-23}}3,\qquad
 R_o(d)=\frac{1+\sqrt{24\cdot2^d+4}}3.
\]
Then
\begin{equation}\label{eq:lambda-closed}
 \lambda_d=\min\left\{
  2+6\ceil{\frac{R_e(d)-2}{6}},\,
  5+6\ceil{\frac{R_o(d)-5}{6}}
 \right\}.
\end{equation}
By \eqref{eq:residue}, when $n$ is odd the integer $P-Q$ is odd and
divisible by three, so $(P-Q)^2\ge9$. Lemma~\ref{lem:signed-pairs}
therefore proves $\nu(d)\ge\lambda_d$.

\begin{proposition}[Refined exact intervals]\label{prop:refined}
Put
\[
 \ell_m=2^{m+1}-2m-3+\ind_{\{m\text{ odd}\}},\qquad
 u_m=\lambda_{2m+1}-2m-3.
\]
For every $m\ge1$ and every integer $\ell_m\le r\le u_m$,
\begin{equation}\label{eq:refined-window}
 \D(C_3\oplus C_2^r)=r+2m+1.
\end{equation}
\end{proposition}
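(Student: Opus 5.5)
The plan is to prove the two inequalities separately. The upper bound should follow from the integer bound $\nu(d)\ge\lambda_d$ exactly as in the proof of Theorem~\ref{thm:windows}. The lower bound requires, when $m$ is even, one more rank than the simplex construction of Proposition~\ref{prop:simplex} provides. I would obtain that extra rank through the exact transfer, Theorem~\ref{thm:exact-transfer} together with \eqref{eq:mu-nu}, by exploiting a vanishing constant Fourier coefficient in $\F_3$.

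\emph{Upper bound.} Suppose a zero-sum-free sequence of length $L=r+2m+1$ exists over $C_3\oplus C_2^r$ with $r\le u_m$. Then $L-r=2m+1$, so the projection kernel has dimension at least $2m+1$. Restrict the label-sum function to a $(2m+1)$-dimensional kernel subspace. As noted before the proof of Theorem~\ref{thm:windows}, this restriction is admissible, with full Fourier support of size
\[
 n\le L+1=r+2m+2\le u_m+2m+2=\lambda_{2m+1}-1 .
\]
This contradicts $\nu(2m+1)\ge\lambda_{2m+1}$, which was derived from Lemma~\ref{lem:signed-pairs} just before the statement. Hence $\D(C_3\oplus C_2^r)\le r+2m+1$ for all $r\le u_m$. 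This step is a direct substitution.

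\emph{Lower bound.} Padding by a new independent binary basis vector (Proposition~\ref{prop:code}) shows that $\D(C_3\oplus C_2^r)-r$ is nondecreasing in $r$. It therefore suffices to treat $r=\ell_m$. When $m$ is odd, $\ell_m=a_m$, and the two-simplex-block construction of Theorem~\ref{thm:windows} already gives $r+2m+1$. When $m$ is even, $\ell_m=a_m-1$, and I would apply \eqref{eq:C3transfer} with $d=2m$.

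\emph{The extra rank for even $m$.} On $\F_2^m$, the function $1-\delta_0$ has Fourier coefficients $-2^{-m}$ at every nonzero frequency. Its constant coefficient is $1-2^{-m}$, which equals $1-(-1)^m=0$ in $\F_3$ when $m$ is even. Therefore
\[
 f(x,y)=(1-\delta_0(x))+(1-\delta_0(y))
\]
on $\F_2^m\times\F_2^m$ has zero set exactly $\{0\}$, because each of $1+0$, $0+1$ and $1+1$ is nonzero in $\F_3$. Its full Fourier support consists of the $2(2^m-1)$ frequencies of the form $(u,0)$ and $(0,v)$ with $u,v\ne0$. So $\nu(2m)\le2^{m+1}-2$, and
\[
 \nu(2m)-1-2m\le 2^{m+1}-2m-3=\ell_m .
\]
Formula \eqref{eq:C3transfer} then yields $\D(C_3\oplus C_2^{\ell_m})\ge\ell_m+2m+1$.

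This vanishing-constant observation is the one step that goes beyond earlier results, and I expect it to be the main point to verify. The case $m=1$ needs a separate check: here $\ell_1=0$, and $\lambda_3=5$ because $n=5$ satisfies $75-10-9=56\ge56$, so $u_1=0$. The interval is then $\{0\}$, where the claim is $\D(C_3)=3$.
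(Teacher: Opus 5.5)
Your proposal is correct and follows the paper's proof. The upper bound is the same substitution of $\nu(2m+1)\ge\lambda_{2m+1}$. The lower bound uses the same function $(1-\delta_0(x))+(1-\delta_0(y))$, whose constant coefficient vanishes in $\F_3$ for even $m$, fed into \eqref{eq:C3transfer}. The only cosmetic difference is for odd $m$: the paper uses this same function there too, where the support is $2^{m+1}-1$, whereas you use the simplex construction plus monotonicity in $r$; both give the same threshold $\ell_m=a_m$.
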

\begin{proof}
On $\F_2^m\oplus\F_2^m$, consider
\[
 f(x,y)=(1-\delta_0(x))+(1-\delta_0(y))\quad\text{in }\F_3.
\]
Its only zero is $(0,0)$: elsewhere its value is one or two.
There are $2(2^m-1)$ nonconstant Fourier coefficients. The constant
coefficient is $2-2^{1-m}$, which is zero for even $m$ and nonzero for
odd $m$. Thus the full support has size
\[
 B_{2m}=2^{m+1}-2+\ind_{\{m\text{ odd}\}}.
\]
Translate this support by a character so it contains a nonzero
constant coefficient. Equations~\eqref{eq:mu-nu} and
\eqref{eq:C3transfer} then give the lower bound for
$r\ge B_{2m}-1-2m=\ell_m$.

For the upper bound, a zero-sum-free sequence of length $r+2m+1$
would yield, on a $(2m+1)$-dimensional kernel subspace, an admissible
function of support size at most
\[
 r+2m+2\le\lambda_{2m+1}-1.
\]
This contradicts $\nu(2m+1)\ge\lambda_{2m+1}$.
\end{proof}

\section{A mixed-characteristic intersection theorem}\label{sec:intersection}
We first extract a linear space from the support of a Fourier-sparse
function. The dimension required for this extraction will depend
only on the degree of the binary condition.

\begin{lemma}[A flat in a sparse support]\label{lem:flat}
Suppose $b:\F_2^d\to\mathbb K$ satisfies $b(0)\ne0$ and
$\Sp(b)\le S$. If an integer $t\ge0$ satisfies
\begin{equation}\label{eq:flat-condition}
 d>t+2^t\log S,
\end{equation}
then $\supp b$ contains a linear subspace of dimension $t+1$.
\end{lemma}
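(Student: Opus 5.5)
We argue by induction on $t$, building a nested chain of linear subspaces $0=W_0<W_1<\cdots<W_{t+1}$ with $W_j\subseteq\supp b$. The tool is the product bound \eqref{eq:sparsity-product} together with Lemma~\ref{lem:uncertainty}. For $t=0$ the hypothesis is $d>\log S$, and we need a nonzero $y$ with $b(y)\ne0$ (then $\{0,y\}\subseteq\supp b$). Since $b\neq0$ and $\Sp(b)\le S$, Lemma~\ref{lem:uncertainty} gives $|\supp b|\ge 2^d/S>1$. So $\supp b$ contains some $y\ne0$, and $\{0,y\}$ is the required line.

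For the inductive step we use products over cosets. Suppose we have a $j$-dimensional linear space $W\subseteq\supp b$, and set
\[
 b_W(x)=\prod_{w\in W}b(x+w).
\]
Each translate $b(\cdot+w)$ has sparsity at most $S$, so \eqref{eq:sparsity-product} gives $\Sp(b_W)\le S^{2^j}$. Also $b_W(0)=\prod_{w\in W}b(w)\ne0$. Moreover $b_W(y)\ne0$ holds exactly when the whole coset $y+W$ lies in $\supp b$, and in that case $W+\langle y\rangle\subseteq\supp b$ is linear of dimension $j+1$ provided $y\notin W$.

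To find such a $y$, Lemma~\ref{lem:uncertainty} gives
\[
 |\supp b_W|\ge 2^{d}S^{-2^j}.
\]
The set $\supp b_W$ is a union of $W$-cosets and contains $W$ itself. So it suffices that $2^dS^{-2^j}>2^j$, i.e.\ $d>j+2^j\log S$. When this holds, some $y\notin W$ has $b_W(y)\ne0$. We apply this step for $j=0,1,\ldots,t$. The required condition $d>j+2^j\log S$ is monotone in $j$, because $S\ge1$ makes $\log S\ge0$. Hence it follows for every $j\le t$ from \eqref{eq:flat-condition}, and after the step $j=t$ we obtain $W_{t+1}$ of dimension $t+1$ inside $\supp b$.

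The only delicate point is the counting: one has to see that exceeding $|W|=2^j$ points in $\supp b_W$ forces a point outside $W$, since $W\subseteq\supp b_W$. One also has to check that the uncertainty lemma applies over an arbitrary field $\mathbb K$ of characteristic $\ne2$. This holds because its proof uses no positivity, and because $b_W\ne0$ by $b_W(0)\ne0$. The degenerate case $S=1$ causes no trouble: the hypothesis reduces to $d>t$, and $b$ is then a nonzero multiple of a character, so it is nowhere zero.
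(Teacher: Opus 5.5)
Your proposal is correct and is essentially the paper's own proof. The paper likewise grows $U$ from $\{0\}$ using $H_U(x)=\prod_{u\in U}b(x+u)$, bounds $\Sp(H_U)\le S^{2^j}$ via \eqref{eq:sparsity-product}, applies Lemma~\ref{lem:uncertainty} to find a point of $\supp H_U$ outside $U$, and notes that $j+2^j\log S$ is nondecreasing in $j$.
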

\begin{proof}
Start from $U=\{0\}$. Suppose a $j$-dimensional subspace
$U\subseteq\supp b$ has been constructed. Define
\[
 H_U(x)=\prod_{u\in U}b(x+u).
\]
It is nonzero at zero, and \eqref{eq:sparsity-product} gives
$\Sp(H_U)\le S^{2^j}$. Hence
\[
 |\supp H_U|\ge 2^dS^{-2^j}
\]
by Lemma~\ref{lem:uncertainty}. If $d>j+2^j\log S$, this support
is larger than $|U|$. Choose $v\in\supp H_U\setminus U$.
Then $v+U\subseteq\supp b$, and $U\cup(v+U)$ is a
$(j+1)$-dimensional linear subspace in $\supp b$.
The function $j+2^j\log S$ is nondecreasing, so
\eqref{eq:flat-condition} permits all the required extensions.
\end{proof}

\begin{proof}[Proof of Theorem~\ref{thm:main-intersection}]
First suppose $Z$ is a singleton. Translate its point to zero.
If \eqref{eq:main-isolation} failed, Lemma~\ref{lem:flat} with
$t=D$ would give a $(D+1)$-dimensional subspace
$U\subseteq\supp b$. On $U$, the binary function $a$ is one
at zero and zero elsewhere. In basis coordinates it is
$\prod_{i=1}^{D+1}(1+x_i)$, whose degree is $D+1$.
Affine substitution cannot increase degree, a contradiction.

For general nonempty $Z$, translate a point of $Z$ to zero and
choose an inclusion-maximal linear subspace $W$ with
$W\cap Z=\{0\}$. Put $k=\dim W$. The singleton bound on $W$ gives
$k\le D+2^D\log S$. Every coset of $W$ meets $Z$:
a missing coset $v+W$ would make
$W\cup(v+W)$ a larger isolating subspace. Therefore
\[
 |Z|\ge2^{d-k}\ge2^{d-D}S^{-2^D}.
\]
All ingredients are stable under affine restriction, as required.
\end{proof}

\begin{remark}\label{rem:intersection}
The theorem is not obtained by multiplying separate lower bounds for
$\{a=1\}$ and $\{b\ne0\}$; two sets of large density can have a very
small intersection. The argument instead uses the algebraic degree
of one condition and the translation structure of the other.
Section~\ref{subsec:obstruction} shows that the coefficient of
$\log S$ in the singleton bound cannot universally be replaced by
one, even when $D=3$ and $b$ takes only the values zero and one.
\end{remark}
 
\section{Fibers of local group-valued products}\label{sec:fibers}
Let $V\subseteq\F_2^L$ be an affine code of dimension $d$.
Fix scopes $S_j\subseteq[L]$, with $|S_j|\le\beta$, and maps
$g_j:\F_2^{S_j}\to K$. We consider the ordered product
\begin{equation}\label{eq:local-word}
 F(x)=g_1(x|_{S_1})\cdots g_T(x|_{S_T}),\qquad x\in V.
\end{equation}
The arity $\beta\ge1$ is fixed; the scopes may overlap arbitrarily.
The order in \eqref{eq:local-word} is part of the data.

\begin{theorem}[Explicit local fibers]\label{thm:local}
Let $Q$ be a finite two-group and $P$ a finite odd $p$-group,
and take $K=Q\times P$. Define
\[
 D_0=\beta\kap(Q),\qquad
 W=\sum_{j=0}^{\kap(P)}2^{\beta j}\binom Tj.
\]
Every attainable $h\in K$ satisfies
\begin{equation}\label{eq:local-density}
 \frac{|F^{-1}(h)\cap V|}{|V|}\ge2^{-D_0}W^{-2^{D_0}}.
\end{equation}
If a fiber is a singleton, then
\begin{equation}\label{eq:local-isolation}
 d\le D_0+2^{D_0}\log W.
\end{equation}
\end{theorem}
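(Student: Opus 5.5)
The plan is to reduce Theorem~\ref{thm:local} to Theorem~\ref{thm:main-intersection}, with $d=\dim V$, $D=D_0$ and $S=W$. Parameterize the affine code $V$ by $x=Az+c$ with $z\in\F_2^d$. Fix an attainable $h=(h_Q,h_P)\in Q\times P$. Since $K$ is a direct product, write each $g_j=(g_j^Q,g_j^P)$. Then $F(x)=h$ exactly when the ordered $Q$-product equals $h_Q$ and the ordered $P$-product equals $h_P$. Both of the following representations are stated on the cube $\F_2^L$ or on its local coordinates. They are then pulled back along $z\mapsto Az+c$, which does not increase algebraic degree or Fourier sparsity (Section~\ref{sec:preliminaries}).

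\emph{Two-primary side.} Work in $\F_2[Q]$ and apply the augmentation expansion \eqref{eq:augmentation} to $\prod_j[g_j^Q(x|_{S_j})]$. Each factor $[g_j^Q(x|_{S_j})]-1$ is a function of at most $\beta$ binary coordinates, so each of its group-algebra coefficients is a binary polynomial of degree at most $\beta$. All terms with more than $\kap(Q)$ augmentation factors vanish. Every surviving term is an ordered product of at most $\kap(Q)$ such factors, and its coefficients are $\F_2$-combinations of products of at most $\kap(Q)$ polynomials of degree $\le\beta$. Taking the coefficient of $[h_Q]$ gives $a:\F_2^L\to\F_2$ with $a(x)=1$ exactly when the $Q$-product equals $h_Q$, and $\deg a\le\beta\kap(Q)=D_0$.

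\emph{Odd-primary side.} Work in $\F_p[P]$ with $\mathbb K=\F_p$, which has characteristic $\ne2$. The same expansion gives the indicator $b(x)$ of the event that the $P$-product equals $h_P$. It is a sum over index sets $J\subseteq[T]$ with $|J|\le\kap(P)$, taken in order, of $[h_P]$-coefficients of $\prod_{j\in J}([g_j^P]-1)$. For fixed $J$, each coefficient is a product, or a sum of products, of functions of the coordinates $\bigcup_{j\in J}S_j$. So the Fourier support for that $J$ lies in the span of characters supported on $\bigcup_{j\in J}S_j$, whose number is at most $\prod_{j\in J}2^{|S_j|}\le2^{\beta|J|}$. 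Summing over $J$ gives $\Sp(b)\le\sum_{j\le\kap(P)}2^{\beta j}\binom Tj=W$. The point to verify is that the support really is counted by the sumset of the per-factor character sets, each of size $\le2^\beta$, and not by anything larger. This follows from \eqref{eq:sparsity-product} applied within each $J$, giving a bound of $2^{\beta|J|}$ per $J$; the union over $J$ then gives $W$.

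\emph{Conclusion.} After pullback, $F^{-1}(h)\cap V$ corresponds exactly to $Z=\{z:a(z)=1,\ b(z)\ne0\}$, because $b$ is a $0/1$ indicator in $\F_p$. The set $Z$ is nonempty because $h$ is attainable. Theorem~\ref{thm:main-intersection} gives $|Z|\ge2^{d-D_0}W^{-2^{D_0}}$, which is \eqref{eq:local-density} after dividing by $|V|=2^d$. It also gives \eqref{eq:local-isolation} in the singleton case. If $\kap(Q)=0$ (trivial $Q$), take $a\equiv1$ of degree $0$. The main obstacle is the bookkeeping in the Fourier count: the ordered nonabelian expansion must still split, for each $J$, into coefficientwise products of local functions, and $W\ge1$ must hold so that the intersection theorem applies. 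The $j=0$ term guarantees $W\ge1$.
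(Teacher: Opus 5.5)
Your proposal is correct and follows essentially the same route as the paper: the $[h_Q]$-coefficient of the truncated augmentation expansion in $\F_2[Q]$ gives the $Q$-target indicator as a binary polynomial of degree at most $\beta\kap(Q)$. The $[h_P]$-coefficient in $\F_p[P]$ gives the $P$-target indicator as a sum over $|J|\le\kap(P)$ of terms depending on at most $\beta|J|$ coordinates, hence with at most $2^{\beta|J|}$ characters each after affine parametrization, and Theorem~\ref{thm:main-intersection} with $D=D_0$, $S=W$ then gives both conclusions. Your extra bookkeeping ($W\ge1$ from the $j=0$ term, the trivial-$Q$ case, pulling back from $\F_2^L$) is harmless and only makes explicit what the paper leaves implicit.
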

\begin{proof}
Write $h=(h_Q,h_P)$ and parametrize $V$ by $\F_2^d$.
First work in $\F_2[Q]$. Before restriction to $V$, each local
augmentation-valued function $[g_j^Q(x)]-1$ has a multilinear
representation of degree at most $\beta$ with coefficients in $I_Q$.
By \eqref{eq:augmentation}, terms with more than $\kap(Q)$ such
factors vanish. Thus the coefficient of $[h_Q]$ in the ordered
product is a binary polynomial $a$ of degree at most $D_0$.
It is the exact $Q$-target indicator. The degree bound persists
after the affine parametrization over $\F_2$.

Now work in $\F_p[P]$. A surviving term in the augmentation expansion
has its factors indexed by $J\subseteq[T]$ with $|J|\le\kap(P)$.
It depends on at most $\beta|J|$ original binary coordinates.
On $V$, those coordinates are affine linear forms, so any scalar
coefficient of this term has Fourier support in a space of at most
$2^{\beta|J|}$ characters. Summing over $J$ shows that the exact
$P$-target indicator $b$ has $\Sp(b)\le W$. The desired fiber is
precisely $\{a=1,b\ne0\}$. Apply
Theorem~\ref{thm:main-intersection}.
\end{proof}

\begin{corollary}[Search consequences]\label{cor:algorithms}
For fixed $Q,P,\beta$, existence of a target preimage in
\eqref{eq:local-word} can be tested in randomized polynomial time
with one-sided error. A preimage can be found with the same error
guarantee. Deterministic search and emptiness certification take
time $\exp(O_{Q,P,\beta}((\log(T+2))^2))$, in addition to polynomial
input-reading and linear-algebra costs.
\end{corollary}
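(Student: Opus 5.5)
The plan is to turn the density bound \eqref{eq:local-density} of Theorem~\ref{thm:local} into algorithms. For fixed $Q,P,\beta$ we have $D_0=O(1)$ and $W\le(T+1)^{\kap(P)}2^{\beta\kap(P)}=\mathrm{poly}(T)$. Hence every attainable target $h$ has fiber density at least $\delta:=2^{-D_0}W^{-2^{D_0}}\ge 1/\mathrm{poly}(T)$ inside $V$.

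\textbf{Randomized part.} First parametrize $V$ by $\F_2^d$ using Gaussian elimination; this is the linear-algebra cost. Then sample $O(1/\delta)$ uniform points $x\in\F_2^d$ and evaluate $F(x)$. Each evaluation costs $T$ group multiplications in the fixed group $K$. If some sample satisfies $F(x)=h$, output $x$. This is a certified preimage, so there are no false positives. If $h$ is attainable, all samples miss with probability at most $(1-\delta)^{c/\delta}\le e^{-c}$. The search algorithm is the same procedure, and its error is one-sided in the same sense.

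\textbf{Deterministic part.} I would use the singleton/isolation structure from the proof of Theorem~\ref{thm:main-intersection}. The key fact is that if $Z=F^{-1}(h)\cap V$ is nonempty, then for any point of $Z$ and any maximal isolating subspace $W'$ of dimension $k\le D_0+2^{D_0}\log W=O((\log(T+2)))$, every coset of $W'$ meets $Z$. This follows from the stability of the bound on affine restrictions.

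To exploit this, restrict to a fixed complement. Take any linear subspace $U\le\F_2^d$ of dimension $k_0:=\lfloor D_0+2^{D_0}\log W\rfloor+1$, for instance spanned by the first $k_0$ coordinates. Then consider the affine restriction to an arbitrary coset $c+U$. I claim that $Z\cap(c+U)$ is nonempty whenever $Z$ is.

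The restricted data on $c+U$ is again a local product over an affine code of dimension $k_0$, so Theorem~\ref{thm:local} applies to it. Its parameters $D_0$ and $W$ are unchanged. The catch is that the theorem only controls \emph{attainable} targets on the restriction, which is the main obstacle.

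To resolve it, I would argue by induction on dimension, halving the space. Split $V$ into two affine hyperplanes $V_0,V_1$ and decide which one contains a preimage. To make this efficient, I would instead apply the density bound to $Z$ itself: $|Z|\ge\delta 2^d$. Hence $Z$ meets every affine subspace of codimension less than $\log(1/\delta)$ that belongs to a suitably pseudorandom family.

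Concretely, it suffices to enumerate the points of a small-bias or $\epsilon$-biased hitting set for the fixed structure. The fiber indicator is $a\cdot[b\ne0]$ with $\deg a\le D_0$ and $\Sp(b)\le W$, and $[b\ne0]$ has $\F_p$-degree structure $b^{p-1}$ with sparsity at most $W^{p-1}$.

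An alternative is cleaner. Enumerate all $k\le k_0$ and all subspaces of the form $\{x: x_i=0 \text{ for } i\notin J\}$ with $|J|\le k_0$, over a basis. This exhaustive search costs $\binom{d}{\le k_0}2^{k_0}=\exp(O(\log(T+2)\cdot\log d))$. Since $d\le\beta T$ after discarding unused coordinates, this is $\exp(O((\log(T+2))^2))$.

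Correctness of this enumeration needs the fact that some coordinate-aligned subspace of dimension $\le k_0$, taken through a point, meets $Z$. I expect proving this to be the delicate step. I would derive it from the maximal-isolating-subspace argument applied iteratively with coordinate hyperplanes: repeatedly fix a coordinate so that the fiber stays nonempty, using the restriction-stable lower bound $|Z|\ge 2^{d-k_0}$ to guarantee at least one of the two halves stays nonempty until dimension $k_0$ remains. Emptiness certification is the failure of this exhaustive search.
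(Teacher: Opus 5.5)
Your randomized part is correct and matches the paper. For fixed $Q,P,\beta$, \eqref{eq:local-density} gives density at least $1/\mathrm{poly}(T)$, and every sampled candidate is checked, so no false witness is ever output. The enumeration you finally settle on is also the paper's algorithm: all vectors of Hamming weight at most $k_0$ in a parametrization $\F_2^{d_0}$ of the projection of $V$ to the union of the scopes, with $d_0\le\beta T$.

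\textbf{The gap is the correctness of that enumeration.} You need that every nonempty fiber $Z$ contains a vector of weight at most $k_0$, i.e.\ that $Z$ meets a \emph{linear} coordinate subspace $\{x:x_i=0,\ i\notin J\}$ with $|J|\le k_0$. Your justification fixes coordinates so that ``at least one of the two halves stays nonempty''. This does not control the value each coordinate is fixed to. Nonemptiness of one half is trivial, so the density bound plays no role there. Once some coordinate has to be fixed to $1$, you end in an affine coordinate subspace that misses the origin. Your enumeration does not list such subspaces, and listing all of them costs about $2^{d_0-k_0}$.

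The earlier detours should also be dropped. The claim that $Z$ meets every coset $c+U$ of an arbitrary $k_0$-dimensional $U$ is false: take $Z=\{x_1=1\}$, $U$ spanned by $e_2,\ldots,e_{k_0+1}$, and $c=0$. The small-bias hitting-set idea is not substantiated.

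The missing idea is a choice that forces zeros. The paper takes a solution $z$ of minimum Hamming weight $k$. The linear coordinate subspace $\{x:\supp x\subseteq\supp z\}$ meets $Z$ only in $z$, because its other points have smaller weight. Applying the singleton bound \eqref{eq:local-isolation} to this $k$-dimensional subcode gives $k\le D_0+2^{D_0}\log W$.

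Your own iteration can also be repaired using the density bound. While the current linear coordinate subspace has dimension $m>D_0+2^{D_0}\log W$, \eqref{eq:local-density} gives at least two solutions in it. They differ in some free coordinate, and one of them is $0$ there. So that coordinate can be fixed to $0$ without emptying the fiber. Either way you obtain a solution of weight $O_{Q,P,\beta}(\log(T+2))$, and your running-time count then applies.
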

\begin{proof}
Project $V$ onto the union of the scopes. Its dimension is at most
$\beta T$, and projection has equal-size fibers, preserving the
density of every target fiber. Uniform sampling succeeds with
inverse polynomial probability by \eqref{eq:local-density}.
Every sampled candidate is checked directly; in an empty fiber
the algorithm never outputs a witness.

For deterministic search, identify the projected code with
$\F_2^{d_0}$. Choose a solution $z$ of minimum Hamming weight $k$.
The coordinate subspace on its support contains no other solution:
every other point has fewer than $k$ nonzero coordinates.
Applying \eqref{eq:local-isolation} to that subspace gives
$k=O_{Q,P,\beta}(\log(T+2))$. Enumerating all vectors up to this
weight gives the asserted running time, including when no solution
exists. A projected witness can be lifted to $V$ by linear algebra.
\end{proof}

\subsection{Sharper estimates for weighted abelian sums}
Let $Q$ be abelian of two-power order and $P$ abelian of odd
prime-power order, allowing either group to be trivial. Set
$b_Q=\D(Q)-1$ and $b_P=\D(P)-1$.

\begin{corollary}[Weighted fibers]\label{cor:weighted}
For labels $h_i\in Q\oplus P$ on an affine binary code $V$ of
dimension $d$ and length $L$, every attainable weighted sum has
at least
\begin{equation}\label{eq:weighted-density}
 2^{d-b_Q}\Vb{b_P}{L}^{-2^{b_Q}}
\end{equation}
preimages. In particular, a singleton fiber implies
\begin{equation}\label{eq:weighted-isolation}
 d\le b_Q+2^{b_Q}\log\Vb{b_P}{L}.
\end{equation}
\end{corollary}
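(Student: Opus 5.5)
We apply Theorem~\ref{thm:main-intersection} directly, with sharper degree and sparsity parameters than the general local-product bound of Theorem~\ref{thm:local}. Parametrize $V$ affinely by $x\in\F_2^d$, writing each coordinate as an affine form $\ell_i(x)\in\{0,1\}$. The weighted sum is then $\sigma_h(x)=\sum_i\ell_i(x)h_i$. Fix an attainable target $g=(g_Q,g_P)$ and split each label as $h_i=(q_i,p_i)$. The fiber is $\{a=1,\ b\ne0\}$, where $a$ indicates $\sum_i\ell_iq_i=g_Q$ and $b$ indicates $\sum_i\ell_ip_i=g_P$. The target is attained, so this set is nonempty.

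For the two-primary part, we work in $\F_2[Q]$ and write
\[
 X^{\sum_i\ell_iq_i}=\prod_{i=1}^L\bigl(1+\ell_i(x)(X^{q_i}-1)\bigr).
\]
This identity holds because $\ell_i\in\{0,1\}$. We expand the product; every term with more than $\kap(Q)=b_Q$ augmentation factors vanishes by \eqref{eq:abelian-kappa}. Taking the coefficient of $[g_Q]$ shows that $a$ is an $\F_2$-polynomial in the $\ell_i$ of degree at most $b_Q$. Since the $\ell_i$ are affine in $x$, we get $\deg a\le b_Q$ over $\F_2^d$. This step gains over Theorem~\ref{thm:local}, which would give degree $\beta\kap(Q)$ with $\beta=1$; here the bound is the same, but it holds now for an arbitrary affine code with no loss.

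For the odd part, we run the identical expansion in $\F_p[P]$, where $\kap(P)=b_P$. Taking the coefficient of $[g_P]$ expresses $b$ as an $\F_p$-linear combination of products $\prod_{i\in J}\ell_i$ with $|J|\le b_P$. Using $\ell_i=(1-\chi_{t_i}\cdot c_i)/2$, where $c_i=\pm1$ accounts for the affine shift, each such product has Fourier support inside $\{\sum_{i\in J'}t_i: J'\subseteq J\}$. The union over all $J$ is therefore contained in $\{\sum_{i\in J}t_i:|J|\le b_P\}$, a set of size at most $\Vb{b_P}{L}$; this is exactly the counting in the proof of Corollary~\ref{cor:sphere}. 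Hence $\Sp(b)\le\Vb{b_P}{L}$.

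Theorem~\ref{thm:main-intersection} with $D=b_Q$ and $S=\Vb{b_P}{L}\ge1$ now gives both \eqref{eq:weighted-density} and \eqref{eq:weighted-isolation}. The degenerate cases need a brief check. If $Q$ is trivial, then $a\equiv1$, $D=0$, and the bound reduces to Lemma~\ref{lem:uncertainty}. If $P$ is trivial, then $b\equiv1$, $S=1$, and the bound reduces to Lemma~\ref{lem:binary-weight}. The main point of care is the Fourier-support step: one must note that products of distinct $\ell_i$ with equal $t_i$, or with zero $t_i$, only shrink the sumset, so the bound $\Vb{b_P}{L}$ survives repeated and zero columns.
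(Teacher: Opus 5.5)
Your proposal is correct and follows the paper's proof. The group-algebra expansion bounds the degree of the $Q$-indicator by $b_Q$ and the Fourier support of the $P$-indicator by $\Vb{b_P}{L}$, and Theorem~\ref{thm:main-intersection} with $D=b_Q$, $S=\Vb{b_P}{L}$ then gives both conclusions. The paper counts that support on $\F_2^L$ before the affine restriction, which is the same count as yours. One small correction to your side remark: the improvement over Theorem~\ref{thm:local} with $\beta=1$ lies in the sparsity, $\Vb{b_P}{L}$ instead of $\sum_{j\le b_P}2^j\binom Lj$, not in the degree (which is the same) or in the class of codes (Theorem~\ref{thm:local} already covers arbitrary affine codes).
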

\begin{proof}
The $Q$ indicator has binary degree at most $b_Q$. For the odd
part, expand
\[
 \prod_{i=1}^L\bigl(1+x_i([h_i^P]-1)\bigr)
 \quad\text{in }\F_p[P].
\]
Its target coefficient is a multilinear polynomial of degree at
most $b_P$ in the original bits. It therefore has Fourier support
at most $\Vb{b_P}{L}$, also after affine restriction.
Theorem~\ref{thm:main-intersection} gives the claim.
\end{proof}

When $Q=1$, the singleton bound is just
$d\le\log\Vb{b_P}{L}$, and the density bound also follows directly
from Lemma~\ref{lem:uncertainty}. The mixed intersection argument
is what controls a simultaneous nontrivial two-primary condition.

\section{A connection with programs over fixed groups}\label{sec:programs}
This section gives a second, qualitative route to the preceding
fiber bounds. A program over a fixed finite group $G$ is an ordered
list of instructions, each of which chooses one of two group
elements according to one input bit. An accepting subset of $G$
specifies the Boolean output. Constants can be inserted as
instructions with equal choices. Let $\gamma_G(n)$ denote the
minimum program length representing an $n$-variable singleton
indicator, with value infinity if no such program exists.

We use the following consequence of existing results. By
\cite[Theorem 17]{IKKW}, if
\begin{equation}\label{eq:IKKW-class}
 G\le N\times(R\rtimes A),
\end{equation}
where $N$ is finite nilpotent, $R$ is a finite $p$-group, and $A$
is finite abelian, then $\gamma_G(n)\ge2^{c_G n}$ for all
sufficiently large $n$, with $c_G>0$. Their Proposition~9 converts
this into an inverse-polynomial lower bound for every nonempty
accepting set of a program of length $m$.

\begin{proposition}[Fixed wreath-product simulation]\label{prop:wreath}
Fix $K=Q\times P$ and $\beta$ as in Theorem~\ref{thm:local}.
There is a fixed finite group $W$ satisfying \eqref{eq:IKKW-class}
such that, after parametrizing the relevant projection of $V$,
each target condition $F(x)=h$ in \eqref{eq:local-word} is
recognized by a $W$-program of length $O_\beta(T^2)$.
The simulation preserves the fraction of satisfying inputs.
\end{proposition}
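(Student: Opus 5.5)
The plan is to build $W$ as a wreath-type product that simulates the two primary conditions separately and then combines them. The combining step must land inside the class \eqref{eq:IKKW-class}. First parametrize the projection of $V$ onto the union of the scopes by $\F_2^{d_0}$ with $d_0\le\beta T$. Each scope variable $x_i$ becomes an affine form $\ell_i(z)$ in $z\in\F_2^{d_0}$. The fiber density is unchanged because projection has equal-size fibers, as in Corollary~\ref{cor:algorithms}. So it suffices to recognize $\{z:F(z)=h\}$ on $\F_2^{d_0}$ while preserving the satisfying fraction. An exact simulation gives equal fractions automatically.

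\textbf{Step 1: linear forms.} A single $\F_2$-affine form $\ell(z)=\langle t,z\rangle+c$ is computed by a program over $C_2$ of length at most $d_0+1$. Each instruction reads one bit $z_k$ with $t_k=1$ and multiplies by the generator or by $1$. Each local map $g_j$ depends on at most $\beta$ such forms.

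\textbf{Step 2: the wreath product.} Take the base group $C_2^{\beta}$, or $C_2^{\beta'}$ for a fixed $\beta'$ large enough to hold the scope forms. Let $K=Q\times P$ act through functions $\F_2^{\beta}\to K$. This gives the standard lamplighter-style group
\[
 W_0=K\wr_{\F_2^\beta}C_2^\beta,
\]
in which one evaluates $g_j(\ell_{S_j}(z))$ by:
\begin{itemize}
\item accumulating the $\beta$ forms in the $C_2^\beta$ register,
\item multiplying by the function-valued constant $g_j$, viewed as an element of $K^{\F_2^\beta}$ placed at the current register position,
\item uncomputing the register by repeating the same instructions, since each generator is an involution.
\end{itemize}
Each factor costs $O(\beta d_0)=O_\beta(T)$ instructions, and $T$ factors give length $O_\beta(T^2)$. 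The ordered product is preserved because the base register returns to zero between factors. The final $K$-value is then read at coordinate $0$ of $K^{\F_2^\beta}$, so the accepting set is the set of group elements whose coordinate at the origin equals $h$.

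\textbf{Step 3: membership in the class.} Here $K^{\F_2^\beta}=Q^{2^\beta}\times P^{2^\beta}$, and $C_2^\beta$ acts on both factors by permuting coordinates. This gives
\[
 W_0\cong (Q^{2^\beta}\times P^{2^\beta})\rtimes C_2^\beta.
\]
I would place $W_0$ in a group of the shape $N\times(R\rtimes A)$ with
\[
 R=P^{2^\beta},\qquad A=C_2^\beta,\qquad N=Q\wr C_2^\beta.
\]
The group $N$ is a $2$-group, hence nilpotent, and $R$ is a $p$-group.

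The embedding is $W_0\to N\times(R\rtimes A)$, $(q,p,a)\mapsto((q,a),(p,a))$. It is a homomorphism because $C_2^\beta$ acts on the two factors independently, and it is injective. The program then runs in the product: each instruction's two choices are the images of the original choices, and acceptance is the image of the original accepting set.

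I expect the main obstacle to be precisely this Step 3 membership when $Q$ or $P$ is nonabelian. The semidirect product $R\rtimes A$ must have $A$ abelian, which $C_2^\beta$ is. The diagonal embedding requires the $C_2^\beta$-coordinate to agree in both components, and this is automatic for the image, so the argument should go through. Length bookkeeping and the case of trivial $Q$ or $P$ are routine.
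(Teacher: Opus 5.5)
Your proposal is correct and follows essentially the same route as the paper. Both use the wreath product $K^{\F_2^\beta}\rtimes\F_2^\beta$, move a cursor according to the affine scope maps, and conjugate the constant table $(g_j,0)$ by that cursor. Both read off $F(x)$ at the origin of the base and use the diagonal-cursor embedding into $(Q^{\F_2^\beta}\rtimes\F_2^\beta)\times(P^{\F_2^\beta}\rtimes\F_2^\beta)$ to land in the class \eqref{eq:IKKW-class}. The only cosmetic difference is that the paper moves the cursor with $d_0$ vector-valued translation instructions per factor rather than $\beta d_0$ one-coordinate ones, which does not affect the $O_\beta(T^2)$ length bound.
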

\begin{proof}
Let $A=\F_2^\beta$ and use the regular translation action on
$K^A$. Write
\[
 W=K^A\rtimes A,\qquad
 (f,u)(g,v)=(f\,\tau_u g,u+v),\qquad
 (\tau_u g)(a)=g(a+u).
\]
Multiplication in $K^A$ is pointwise and ordered.

Project the code onto the coordinates that occur in a scope, and
parametrize it by $z\in\F_2^{d_0}$, where $d_0\le\beta T$.
Each local scope is an affine map $y_j(z)\in A$; append unused
coordinates when its size is smaller than $\beta$. Regard the
local table as $f_j\in K^A$. The cursor $(1,y_j(z))$ can be
computed with at most $d_0$ bit-dependent translation instructions
and one constant instruction. Consequently
\begin{equation}\label{eq:conjugate}
 (1,y_j(z))(f_j,0)(1,y_j(z))^{-1}
       =(\tau_{y_j(z)}f_j,0)
\end{equation}
is a program of length at most $2d_0+3$. At coordinate $a=0$,
the right-hand side has value $g_j(y_j(z))$.
The product of these conjugates has cursor zero, and its base
coordinate at zero is exactly $F(x)$. Take the accepting subset
\[
 \{(f,0)\in W:f(0)=h\}.
\]
This gives length at most $T(2d_0+3)$ and the desired accepting
condition.

Finally the diagonal cursor map embeds $W$ into
\begin{equation}\label{eq:wreath-embedding}
 (Q^A\rtimes A)\times(P^A\rtimes A).
\end{equation}
The first factor is a two-group and hence nilpotent; the second
is an odd $p$-group extended by the abelian group $A$.
Thus \eqref{eq:IKKW-class} applies. Uniform binary parameters give
uniform points of the projected affine code, and projection has
equal-size fibers, so all accepting fractions are preserved.
\end{proof}

\begin{corollary}[Qualitative density from group programs]\label{cor:IKKW-density}
For every fixed $Q,P,\beta$ there is $C=C(Q,P,\beta)$ such that
every nonempty fiber in Theorem~\ref{thm:local} has density at least
$(T+2)^{-C}$.
\end{corollary}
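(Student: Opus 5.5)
We derive the corollary by combining Proposition~\ref{prop:wreath} with the cited results of~\cite{IKKW}. Fix $Q,P,\beta$, and let $W$ be the fixed group produced by Proposition~\ref{prop:wreath}. This group depends only on $K=Q\times P$ and $\beta$, not on $T$, $V$, or the maps $g_j$. Since $W$ satisfies \eqref{eq:IKKW-class}, \cite[Theorem~17]{IKKW} gives $\gamma_W(n)\ge2^{c_Wn}$ for all large $n$. Their Proposition~9 converts this into the following statement. There is a constant $C_W$ such that every $W$-program of length $m$ whose accepting set is nonempty accepts at least a $(m+2)^{-C_W}$ fraction of its inputs. We take this conversion as stated, possibly after enlarging $C_W$ to absorb small $m$ and the threshold in ``sufficiently large $n$''.

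Now take a nonempty fiber $F^{-1}(h)\cap V$ from Theorem~\ref{thm:local}. Project $V$ onto the union of the scopes and parametrize the projection by $z\in\F_2^{d_0}$ with $d_0\le\beta T$. Proposition~\ref{prop:wreath} then yields a $W$-program of length $m\le T(2d_0+3)\le T(2\beta T+3)=O_\beta(T^2)$ that recognizes $F=h$ and preserves the fraction of satisfying inputs. The fiber is nonempty, so the accepting set of this program is nonempty. Hence the fiber density is at least
\[
(m+2)^{-C_W}\ge\bigl(c_\beta(T+2)^2\bigr)^{-C_W}\ge(T+2)^{-C},
\]
for a suitable $C=C(Q,P,\beta)$ that absorbs the constant $c_\beta$.

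The only delicate point is checking that the cited density statement applies verbatim. The input is a program over a fixed group, the acceptance is by an arbitrary subset of the group, and the lower bound depends only on the length. The program from Proposition~\ref{prop:wreath} uses constant instructions and bit-dependent translations, which fit this model. The accepting set $\{(f,0):f(0)=h\}$ is a legitimate subset of $W$. One more check is needed: the input variables of the program are the $d_0$ parameters $z$, and the density is computed with respect to uniform $z$. This agrees with the fiber density on $V$, because the projection has equal-size fibers. With these checks the corollary follows. It is qualitative, since $C$ is inherited from the non-explicit constant $c_W$.
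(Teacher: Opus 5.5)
Your proposal is correct and follows the paper's proof: you feed the fixed-group $W$-program of length $O_\beta(T^2)$ from Proposition~\ref{prop:wreath} into the density consequence of \cite[Theorem~17]{IKKW} and their Proposition~9, and use that the simulation preserves the fiber density. Your extra bookkeeping, namely absorbing $c_\beta$ and small cases into $C$ and checking that uniform parameters $z$ reproduce the fiber density on $V$, only makes explicit what the paper's three-line proof leaves implicit.
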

\begin{proof}
Apply the program density consequence above to
Proposition~\ref{prop:wreath}. Its length is polynomial in $T$,
and its group is fixed. The resulting density is the original
fiber density.
\end{proof}

The explicit estimate \eqref{eq:local-density} and this reduction
serve different quantitative purposes. The former gives an exponent
in terms of primary augmentation indices and applies a field-uniform
intersection theorem. The latter explains why the qualitative
phenomenon is compatible with fixed-group satisfiability theory.
Thus the polynomial-density and search consequences can be obtained
through this fixed-group connection, while the direct Fourier
argument also provides the explicit estimates needed below.
 
\section{Logarithmic Davenport corrections}\label{sec:Davenport}
\subsection{One odd prime: explicit upper bounds}
Let $H=Q\oplus P$, where $Q$ is a fixed abelian two-group and
$P$ a fixed abelian odd $p$-group. Put
\[
 b_Q=\D(Q)-1,\qquad b_P=\D(P)-1,
 \qquad A_H(L)=b_Q+2^{b_Q}\log\Vb{b_P}{L}.
\]

\begin{proposition}[A finite-rank bound]\label{prop:D-upper}
If $r\ge1$ and $\lfloor A_H(2r)\rfloor\le r-1$, then
\begin{equation}\label{eq:D-upper-finite}
 \D(H\oplus C_2^r)\le r+1+\lfloor A_H(2r)\rfloor.
\end{equation}
Consequently
\begin{equation}\label{eq:D-upper-asymptotic}
 \D(H\oplus C_2^r)
 \le r+2^{b_Q}b_P\log r+O_H(1).
\end{equation}
\end{proposition}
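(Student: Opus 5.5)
The plan is to combine the code correspondence (Proposition~\ref{prop:code}) with the singleton-fiber bound of Corollary~\ref{cor:weighted}. Take a zero-sum-free sequence $((h_i,v_i))_{i=1}^L$ over $H\oplus C_2^r$. Argue by contradiction and suppose $L\ge r+2+\lfloor A_H(2r)\rfloor$. Passing to a subsequence, which is still zero-sum free, we may assume $L=r+2+\lfloor A_H(2r)\rfloor$. The hypothesis $\lfloor A_H(2r)\rfloor\le r-1$ then gives $L\le 2r+1$.

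The binary selection kernel $V$ has dimension $d\ge L-r=2+\lfloor A_H(2r)\rfloor$. The weighted sum $\sigma_h$ on $V$ has the singleton fiber $\sigma_h^{-1}(0)\cap V=\{0\}$. Applying \eqref{eq:weighted-isolation} with $Q,P$ the two primary parts of $H$ gives
\[
 d\le b_Q+2^{b_Q}\log\Vb{b_P}{L}=A_H(L).
\]

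The main obstacle is the length mismatch: $L$ may equal $2r+1$, while the hypothesis controls $A_H(2r)$. I would handle this by deleting one more element. If $L=2r+1$, drop the last term. The shortened sequence is still zero-sum free, has length $2r$, and its kernel has dimension at least $2r-r=r\ge 1+\lfloor A_H(2r)\rfloor$. Then \eqref{eq:weighted-isolation} gives $r\le A_H(2r)$. This is not yet a contradiction, so instead I would compare directly with the unshortened kernel bound. For the original sequence, $d\ge r+1$ and $d\le A_H(2r+1)$, so I need $A_H(2r+1)<r+1$, which can fail by a hair.

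The cleaner route is to work with $L'=\min(L,2r)$ from the start. Take $L=r+1+\lfloor A_H(2r)\rfloor+1$ and note that this is at most $2r$ exactly when $\lfloor A_H(2r)\rfloor\le r-2$. In the boundary case $\lfloor A_H(2r)\rfloor=r-1$, the claimed bound reads $\D\le 2r$. For that case I would use the truncation: any zero-sum-free sequence of length $2r$ has $d\ge r>A_H(2r)$, since $r\ge \lfloor A_H(2r)\rfloor+1>A_H(2r)$. In general, for a subsequence of length $L\le 2r$, monotonicity of $\Vb{b_P}{\cdot}$ gives $A_H(L)\le A_H(2r)$, so $d\ge L-r\ge \lfloor A_H(2r)\rfloor+1>A_H(2r)\ge A_H(L)$, a contradiction. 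This proves \eqref{eq:D-upper-finite}.

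For \eqref{eq:D-upper-asymptotic}, use $\Vb{b_P}{2r}\le (2r+1)^{b_P}$, which gives
\[
 A_H(2r)\le b_Q+2^{b_Q}b_P\log(2r+1)=2^{b_Q}b_P\log r+O_H(1).
\]
This is $o(r)$, so the hypothesis $\lfloor A_H(2r)\rfloor\le r-1$ holds for all large $r$, and \eqref{eq:D-upper-finite} yields the asymptotic bound. Small $r$ are absorbed into $O_H(1)$, using the finiteness of $\D$.
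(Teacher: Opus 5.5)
There is an off-by-one error in how you set up the contradiction. Since $\D(G)$ is one more than the maximal length of a zero-sum-free sequence, proving $\D(H\oplus C_2^r)\le r+1+\lfloor A_H(2r)\rfloor$ means ruling out a zero-sum-free sequence of length $r+1+\lfloor A_H(2r)\rfloor$.

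Your hypothesis $L\ge r+2+\lfloor A_H(2r)\rfloor$ rules out only longer sequences. So when $\lfloor A_H(2r)\rfloor\le r-2$, your argument proves only $\D\le r+2+\lfloor A_H(2r)\rfloor$, which is one weaker than the statement. The boundary case $\lfloor A_H(2r)\rfloor=r-1$ is correct, because there you exclude length $2r=r+1+\lfloor A_H(2r)\rfloor$, which is the right threshold.

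The repair is immediate, and it is exactly the paper's proof. Take $L=r+1+\lfloor A_H(2r)\rfloor$. The hypothesis $\lfloor A_H(2r)\rfloor\le r-1$ says precisely that $L\le 2r$. Then
$d\ge L-r=1+\lfloor A_H(2r)\rfloor>A_H(2r)\ge A_H(L)$,
which contradicts \eqref{eq:weighted-isolation}.

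With this threshold, the ``length mismatch'' at $2r+1$ and the separate boundary case never arise; both come only from the shifted threshold. Your remark that $r\le A_H(2r)$ ``is not yet a contradiction'' is also mistaken. As you observe a few lines later, $r\ge\lfloor A_H(2r)\rfloor+1>A_H(2r)$.

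The asymptotic part, via $\Vb{b_P}{2r}\le(2r+1)^{b_P}$, is fine.
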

\begin{proof}
If a zero-sum-free sequence has length $L$, its selection kernel
in Proposition~\ref{prop:code} has dimension at least $L-r$.
The zero fiber of its $H$-sum is a singleton. Thus
\begin{equation}\label{eq:length-obstruction}
 L-r\le A_H(L)
\end{equation}
by Corollary~\ref{cor:weighted}. Take
$L=r+1+\lfloor A_H(2r)\rfloor\le2r$. Then
$L-r=1+\lfloor A_H(2r)\rfloor>A_H(2r)\ge A_H(L)$,
contradicting \eqref{eq:length-obstruction}.
This proves \eqref{eq:D-upper-finite}.

For fixed $b_P>0$,
$\log\Vb{b_P}{2r}=b_P\log r+O_H(1)$, so the finite condition
holds for all sufficiently large $r$ and yields
\eqref{eq:D-upper-asymptotic}. If $b_P=0$, the same conclusion
holds with a bounded correction, and Olson's formula gives the
exact value $\D(H\oplus C_2^r)=r+\D(H)$.
\end{proof}

\begin{remark}
Corollary~\ref{cor:IKKW-density} gives the qualitative conclusion
of Proposition~\ref{prop:D-upper} by the same selection-kernel
argument. If its unique fiber has dimension $d$, its density
$2^{-d}$ must be at least $L^{-O_H(1)}$. Hence
$L-r\le d=O_H(\log L)$, which implies $L=r+O_H(\log r)$.
The elementary proof above identifies an explicit coefficient.
\end{remark}

\subsection{Lower bounds and the odd-prime threshold}
We first record the standard simplex-block argument in the form
needed for the logarithmic lower bound.

\begin{corollary}[Logarithmic simplex lower bound]\label{lem:simplex}
For every nontrivial fixed finite abelian group $H$ of odd order,
\[
 \D(H\oplus C_2^r)\ge r+(\D(H)-1)\log r-O_H(1).
\]
In particular, for every odd prime $p$,
\begin{equation}\label{eq:simplex}
 \D(C_p\oplus C_2^r)\ge r+(p-1)\log r-O_p(1).
\end{equation}
\end{corollary}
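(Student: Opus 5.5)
This is a direct consequence of Proposition~\ref{prop:simplex}, so the plan is to specialize its rounded form \eqref{eq:simplex-rounded} and absorb the $b$-dependent terms into $O_H(1)$. Put $b=\D(H)-1$. Since $H$ is nontrivial of odd order, $b\ge2$; in particular $b\ge1$, so Proposition~\ref{prop:simplex} applies.

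For $r\ge b$, \eqref{eq:simplex-rounded} gives
\[
 \D(H\oplus C_2^r)\ge r+b\log_2(r/b)-b+1=r+b\log r-(b\log b+b-1).
\]
The subtracted quantity depends only on $H$. For the finitely many ranks $0\le r<b$, use the trivial bound $\D(H\oplus C_2^r)\ge r+1$ (take $r$ independent binary basis vectors). Then $r+b\log r-\D(H\oplus C_2^r)\le b\log b$ for $1\le r<b$. For $r=0$, interpret $\log r$ via $\log(r+2)$, or simply exclude this case, since the statement is asymptotic. Enlarging the constant to cover these ranks gives the first claim with $O_H(1)=b\log b+b$.

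For the second display, take $H=C_p$. Then $\D(C_p)=p$ by Olson's theorem (or directly: the cyclic case has $\D=\Ds$), so $b=p-1$ and \eqref{eq:simplex} follows.

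The only step needing care is bookkeeping. The rounding loss in choosing $m=\lfloor\log_2(1+r/b)\rfloor$ is already accounted for in \eqref{eq:simplex-rounded}, and the small ranks are handled by the trivial bound.
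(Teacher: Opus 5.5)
Your proposal is correct and matches the paper's proof, which simply applies \eqref{eq:simplex-rounded} with $b=\D(H)-1$ fixed and absorbs the $b$-dependent terms into $O_H(1)$. Your handling of the finitely many ranks $r<b$ through the trivial bound is valid extra bookkeeping that the paper leaves implicit.
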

\begin{proof}
Apply \eqref{eq:simplex-rounded} with $b=\D(H)-1$ fixed.
\end{proof}

If an odd prime $p$ divides $|H|$, then $C_p\le H$, so
\eqref{eq:simplex} and Lemma~\ref{lem:constant-offset} give
$E_H(r)\gg_H\log r$.

\begin{lemma}[Composite-modulus construction]\label{lem:composite}
Let $H$ be a fixed finite abelian group with $t=t(H)\ge2$.
Then \eqref{eq:main-superlog} holds for all sufficiently large $r$.
\end{lemma}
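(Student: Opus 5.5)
We reduce to the case $H\supseteq C_{p_1}\oplus\cdots\oplus C_{p_t}\cong C_m$ with $m=p_1\cdots p_t$ squarefree and odd, where $p_1,\ldots,p_t$ are the odd primes dividing $|H|$. Because $\D$ is monotone under subgroups, we will build the zero-sum-free sequence over $C_m\oplus C_2^r$ and embed it. For the comparison we need a formula for $\Ds(H\oplus C_2^r)$, and Lemma~\ref{lem:constant-offset} supplies it: $\Ds(H\oplus C_2^r)=r+c_H$ for large $r$. We then read the padding step in Proposition~\ref{prop:code}(2) as a complement to this: if $V\le\F_2^L$ has dimension $d$ and $\sigma_h^{-1}(0)\cap V=\{0\}$, with labels $h_i\in C_m\le H$, we get a zero-sum-free sequence of length $r+d$ over $H\oplus C_2^r$ whenever $r\ge L-d$. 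Hence $\D(H\oplus C_2^r)\ge r+d+1$. It therefore suffices to find, for large $r$, such a pair $(V,h)$ with $L-d\le r$ and $d\ge c(\log r/\log\log r)^t$; the constant $c_H$ is absorbed because $d\to\infty$.

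The construction is the Barrington--Beigel--Rudich symmetric OR polynomial modulo $m$, in the form used by \cite{BGG}. Fix $n$. For each $j\le t$, let $k_j$ be the least power of $p_j$ with $k_j>n^{1/t}$. BBR then gives an integer multilinear polynomial $P(y)=\sum_{|J|\le\delta}c_J\prod_{i\in J}y_i$ on $\{0,1\}^n$ with $\delta=O_m(n^{1/t})$, such that $P(y)\equiv0\pmod m$ exactly when $y=0$. Concretely, $P$ is assembled by CRT from $\sum_{k}\binom{|y|}{k}$-type symmetric polynomials, which by Lucas' theorem detect the base-$p_j$ digits of $|y|$ up to $k_j$. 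These digits jointly vanish only if $m'\mid|y|$ for some $m'>n$, which forces $|y|=0$. Writing $\prod_{i\in J}y_i$ on $\F_2^n$ as a $\{0,1\}$-valued function, we obtain an $f:\F_2^n\to C_m$ with unique zero at $0$. Its nonconstant coefficients are indexed by the monomials of degree at most $\delta$, as in the remark after Lemma~\ref{lem:uncertainty}. Invoking Theorem~\ref{thm:exact-transfer} with $\mu_{C_m}(n)\le \Vb{\delta}{n}$, or building the code directly, gives $\D(C_m\oplus C_2^r)\ge r+1+n$ as soon as $r\ge \Vb{\delta}{n}-n$. Alternatively, the direct route takes $V$ as the image of $\F_2^n$ under the map $x\mapsto(\prod_{i\in J}x_i)_{|J|\le\delta}$. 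That map is not linear, so one must instead take $V$ to be the linear span. The Fourier route through Theorem~\ref{thm:exact-transfer} avoids this issue, because each product $\prod_{i\in J}x_i$ expands into characters $\chi_u$ with $u\subseteq J$ and the constant term is handled by $f(0)=0$.

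For the parameter count, $\log\Vb{\delta}{n}\le\delta\log n\le C_m n^{1/t}\log n$. We want this to be at most $\log r$, so we take $n=\lfloor c(\log r/\log\log r)^t\rfloor$ with $c=c_m$ small. Then $n^{1/t}\log n\le c^{1/t}\log r\cdot(t+o(1))$, which is below $\log r/C_m$ for small $c$. This gives $\mu_{C_m}(n)-n\le r$ and hence $\D(H\oplus C_2^r)\ge r+1+n=\Ds(H\oplus C_2^r)+n+1-c_H$, which yields \eqref{eq:main-superlog} with a slightly smaller constant.

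The main obstacle is verifying the BBR polynomial precisely. We need its degree bound $O(n^{1/t})$ with a constant depending only on $m$, and we need to check that it is a genuine integer polynomial whose reduction modulo $m$ vanishes only at $y=0$. The plan is to cite \cite{BBR,BGG} for this; everything after it is the routine transfer and counting above.
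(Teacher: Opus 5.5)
Your proposal is correct and follows essentially the same route as the paper. A CRT combination of prime-power weight-divisibility detectors (the BBR mechanism) gives a polynomial modulo $M=\prod_i p_i$ of degree $O_H(\log r/\log\log r)$ with unique zero on a cube of dimension $\gg(\log r/\log\log r)^t$; this polynomial is expanded in parity characters and converted into a zero-sum-free sequence in $C_M\oplus C_2^r\le H\oplus C_2^r$. The differences are cosmetic: you fix $n$ and take $k_j$ to be the least power of $p_j$ above $n^{1/t}$, whereas the paper fixes the degree $B$ and sets $d=\prod_iQ_i-1$. The paper also verifies the detector $u_q=\sum_{j<q}(-1)^je_j$ directly via Lucas's theorem instead of citing it.
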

\begin{proof}
Choose the distinct odd primes $p_1,\ldots,p_t$ dividing $|H|$
and set $M=\prod_i p_i$. For a large integer $B$, let $Q_i$ be
the largest power of $p_i$ not exceeding $B+1$, and set
$d=\prod_iQ_i-1$. Thus $d\gg_H B^t$.

For $q=p^k$, the polynomial
\begin{equation}\label{eq:digit-polynomial}
 u_q(x)=\sum_{j=0}^{q-1}(-1)^j e_j(x)\quad\text{over }\F_p
\end{equation}
is the indicator of $q\mid\wt(x)$ on any binary cube; $e_j$
denotes the elementary symmetric polynomial and $e_0=1$.
To verify this, at weight $w\ge1$ use
\[
 \sum_{j=0}^{q-1}(-1)^j\binom wj
       =(-1)^{q-1}\binom{w-1}{q-1}.
\]
Since $q$ is odd, the sign is positive, and Lucas's theorem makes
the last binomial coefficient equal to one modulo $p$ precisely
when $q\mid w$, and zero otherwise. At $w=0$, the sum is one.

By the Chinese remainder theorem, combine the polynomials
$1-u_{Q_i}$ into one polynomial $f$ over $\Z/M\Z$ of degree
at most $B$. Its zero set on $\F_2^d$ is exactly $\{0\}$:
the simultaneous conditions $Q_i\mid\wt(x)$ force
$\prod_iQ_i\mid\wt(x)$, while $0\le\wt(x)\le d$.
This is the symmetric composite-modulus mechanism
of~\cite{BBR}, used in the sparse-polynomial/code setting
in~\cite{BGG}.

Because two is invertible modulo $M$, expand $f$ in parity
characters. At most $\Vb{B}{d}$ frequencies occur.
The identity $f(0)=0$ gives
\[
 f(x)=\sum_{u\ne0}c_u(\chi_u(x)-1)
      =\sum_{u\ne0}(-2c_u)\ell_u(x),
\]
where $\ell_u(x)=u\cdot x\pmod2$ is regarded as a bit.
Use one coordinate for each nonzero coefficient $c_u$.
The resulting linear map to a binary code is injective:
a vector in its kernel would also be a zero of $f$.
The code has dimension $d$ and length
$L\le\Vb{B}{d}-1$, with a unique zero for its weighted $C_M$-sum.

Now $\log\Vb{B}{d}=O_H(B\log B)$.
For each sufficiently large $r$, choose
$B=\lfloor\varepsilon_H\log r/\log\log r\rfloor$, where
$\varepsilon_H>0$ is a sufficiently small fixed constant.
Then $L\le r$, and Proposition~\ref{prop:code} gives a zero-sum-free
sequence of length $r+d$ in $C_M\oplus C_2^r$.
The subgroup $C_M\le H$ and
$\Ds(H\oplus C_2^r)=r+O_H(1)$ finish the proof.
\end{proof}

The same construction has a finite form, useful without an
unspecified asymptotic constant.

\begin{proposition}[Finite composite-modulus bound]\label{prop:CRT-finite}
Let $M=p_1\cdots p_t$ be a product of distinct odd primes, with $t\ge1$. Choose
$Q_i=p_i^{a_i}$, with $a_i\in\Z_{\ge0}$, and put
$Q=\prod_iQ_i$ and $d_0=\max_i(Q_i-1)$. Then
\begin{equation}\label{eq:CRT-finite}
 \D(C_M\oplus C_2^r)\ge r+Q
 \quad\text{whenever}\quad
 r\ge\Vb{d_0}{Q-1}-Q.
\end{equation}
\end{proposition}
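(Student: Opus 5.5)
Run the construction of Lemma~\ref{lem:composite} in finite form, with $d=Q-1$ and with the degree bound $d_0$ in place of $B$. Work on the cube $\F_2^{d}$ with $d=Q-1$. For each $i$, take the polynomial $u_{Q_i}$ of \eqref{eq:digit-polynomial} over $\F_{p_i}$. It is the indicator of $Q_i\mid\wt(x)$ and has degree at most $Q_i-1\le d_0$. If $a_i=0$, then $Q_i=1$ and $u_{Q_i}=e_0=1$, which is harmless. By the Chinese remainder theorem, combine the polynomials $1-u_{Q_i}$ into a single multilinear polynomial $f$ over $\Z/M\Z$ of degree at most $d_0$.

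The first step is to show that the zero set of $f$ is exactly $\{0\}$. We have $f(x)=0$ iff $f(x)\equiv0$ mod every $p_i$, iff $Q_i\mid\wt(x)$ for all $i$. Since the $Q_i$ are pairwise coprime, this is equivalent to $Q\mid\wt(x)$. Because $0\le\wt(x)\le Q-1$, this forces $x=0$.

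Next, pass to parity characters, using that $2$ is invertible mod $M$. Since $x_i=(1-\chi_{e_i})/2$, only characters supported on at most $d_0$ coordinates occur. So the support, including the constant character, has size at most $\Vb{d_0}{Q-1}$. As in Lemma~\ref{lem:composite}, $f(0)=0$ lets us write
\[
 f=\sum_{u\ne0}(-2c_u)\ell_u .
\]
This gives a binary code of length $L\le\Vb{d_0}{Q-1}-1$ and dimension $Q-1$. The coordinate map is injective: a nonzero kernel vector would make every $\ell_u$ vanish and hence be a nonzero zero of $f$. The weighted $C_M$-sum on this code has the unique zero $0$. Here I would use the exact transfer structure, as in Theorem~\ref{thm:exact-transfer}, rather than counting the constant coefficient. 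The nonconstant support size $s$ satisfies $s\le\Vb{d_0}{Q-1}-1$.

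Finally, apply Proposition~\ref{prop:code}(2). The code yields a zero-sum-free sequence over $C_M\oplus C_2^{s-d}$, where $s-d\le\Vb{d_0}{Q-1}-1-(Q-1)=\Vb{d_0}{Q-1}-Q$. Padding with independent basis vectors then gives, for every $r\ge\Vb{d_0}{Q-1}-Q$, a zero-sum-free sequence of length $r+d=r+Q-1$. Hence $\D(C_M\oplus C_2^r)\ge r+Q$.

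The only delicate point is this bookkeeping: checking that the constant character is correctly excluded, so that the threshold is $\Vb{d_0}{Q-1}-Q$ and not off by one. The degenerate case $Q=1$ (all $a_i=0$) gives $d=0$, and the bound $\D\ge r+1$ follows directly from $r$ independent binary basis vectors.
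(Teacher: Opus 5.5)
Your proposal is correct and matches the paper's proof. The paper forms the same CRT combination of the $1-u_{Q_i}$ on the cube of dimension $Q-1$ (with $u_1=1$), records $\mu_{C_M}(Q-1)\le\Vb{d_0}{Q-1}-1$, and invokes Theorem~\ref{thm:exact-transfer}. The lower-bound half of that theorem is exactly the code construction and padding via Proposition~\ref{prop:code} that you carry out by hand, including the same constant-coefficient bookkeeping and the same degenerate case $Q=1$.
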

\begin{proof}
Combine the polynomials $1-u_{Q_i}$ in
\eqref{eq:digit-polynomial} by CRT on the cube of dimension $Q-1$.
For $Q_i=1$, set $u_1=1$. The resulting polynomial has a unique
zero and degree at most $d_0$, so
$\mu_{C_M}(Q-1)\le\Vb{d_0}{Q-1}-1$.
Theorem~\ref{thm:exact-transfer} now applies. If $Q=1$, the same
conclusion follows from $\mu_{C_M}(0)=0$.
\end{proof}

\begin{example}
For $M=15$, take $Q_1=3^a$ and $Q_2=5^b$. On the cube of
dimension $3^a5^b-1$, the polynomial is explicitly
\[
 f=10\sum_{j=1}^{3^a-1}(-1)^{j+1}e_j
   +6\sum_{j=1}^{5^b-1}(-1)^{j+1}e_j\pmod {15}.
\]
Its only zero is the origin, and its degree is at most
$\max(3^a,5^b)-1$. Both prime components test the same Boolean
weight, over a range of length $3^a5^b$.
\end{example}

\begin{proposition}[An explicit bound at every large rank]\label{prop:mixed-growth}
Let $H$ be fixed and abelian, with distinct odd prime divisors
$p_1,\ldots,p_t$, where $t\ge2$, and put $M=\prod_i p_i$.
For $r\ge4$, set
\[
 R=\log_2r,\qquad
 d=\floor{\frac{R}{(t-1)\log_2R+t+2}},\qquad
 Q_i=p_i^{\lfloor\log_{p_i}(d+1)\rfloor}.
\]
Whenever $d\ge1$,
\begin{equation}\label{eq:mixed-explicit}
 \D(H\oplus C_2^r)\ge r+\prod_iQ_i
 >r+\frac{(d+1)^t}{M}.
\end{equation}
In particular,
\begin{equation}\label{eq:mixed-order}
 \D(H\oplus C_2^r)\ge\Ds(H\oplus C_2^r)
 +\left(\frac1{M(t-1)^t}+o_H(1)\right)
 \left(\frac{\log_2r}{\log_2\log_2r}\right)^t.
\end{equation}
\end{proposition}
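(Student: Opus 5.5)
The plan is to apply Proposition~\ref{prop:CRT-finite} with the chosen $Q_i$ and then pass from $C_M$ to $H$. Since $C_M\le H$, write $H=C_M\oplus H'$ up to isomorphism. A zero-sum-free sequence over $C_M\oplus C_2^r$ is also zero-sum-free in the subgroup $C_M\oplus C_2^r\le H\oplus C_2^r$. Monotonicity of $\D$ under subgroups therefore gives $\D(H\oplus C_2^r)\ge\D(C_M\oplus C_2^r)$.

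By construction $Q_i\le d+1<p_iQ_i$, so $Q_i>(d+1)/p_i$. Multiplying over $i$ gives $Q=\prod_iQ_i>(d+1)^t/M$, which is the second inequality in \eqref{eq:mixed-explicit}. Also $d_0=\max_i(Q_i-1)\le d$.

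The first inequality needs the rank hypothesis $r\ge\Vb{d_0}{Q-1}-Q$. It suffices to show $\Vb{d}{Q-1}\le r$. Use $Q\le(d+1)^t$ and the crude bound $\Vb{d}{N}\le(N+1)^d$, or the sharper $\Vb{d}{N}\le 2N^d$ for $N\ge2$. This gives
\[
 \log_2\Vb{d}{Q-1}\le d\,t\log_2(d+1)+1.
\]
This must be at most $R=\log_2 r$. The definition of $d$ gives $d\bigl((t-1)\log_2R+t+2\bigr)\le R$, and $d\le R$, so $\log_2(d+1)\le\log_2 R+1$.

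\textbf{Main obstacle.} The bound just obtained has coefficient $t\log_2R$, but the definition of $d$ only supplies $(t-1)\log_2R$. I will therefore avoid the crude bound and estimate the ball more carefully:
\[
 \Vb{d}{Q-1}\le\Bigl(\frac{eQ}{d}\Bigr)^d\le\bigl(e(d+1)^t/d\bigr)^d.
\]
Here $(d+1)^t/d\le 2^t d^{t-1}$ roughly, so
\[
 \log_2\Vb{d}{Q-1}\le d\bigl((t-1)\log_2 d+t+\log_2e\bigr)\le d\bigl((t-1)\log_2R+t+2\bigr)\le R.
\]
This matches the denominator in the definition of $d$ exactly, which suggests it is the intended bookkeeping. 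The small cases, such as $d=1$ or $Q<d$ where the $eN/d$ bound is weak, I will check directly: there $\Vb{d}{Q-1}\le 2^{Q-1}$, and this is small. Thus $r\ge\Vb{d_0}{Q-1}-Q$ holds and Proposition~\ref{prop:CRT-finite} applies.

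For \eqref{eq:mixed-order}, Lemma~\ref{lem:constant-offset} gives $\Ds(H\oplus C_2^r)=r+O_H(1)$. As $r\to\infty$ we have $d\sim R/((t-1)\log_2R)$. Hence
\[
 (d+1)^t/M=\Bigl(\frac1{M(t-1)^t}+o(1)\Bigr)\Bigl(\frac{\log_2r}{\log_2\log_2r}\Bigr)^t,
\]
and the $O_H(1)$ term is absorbed into the $o_H(1)$.
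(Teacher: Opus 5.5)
Your proof is correct and follows the paper's argument essentially step for step. You apply Proposition~\ref{prop:CRT-finite} in the subgroup $C_M\le H$ and bound $\Vb{d}{Q-1}\le(e2^td^{t-1})^d\le 2^{d((t-1)\log_2R+t+2)}\le r$ using $d\le R$ and $\log_2 e<2$. The case $Q-1<d$ is handled by $2^{Q-1}\le 2^d\le r$, and $\Ds-r=O_H(1)$ is absorbed by Lemma~\ref{lem:constant-offset}. One harmless slip: $C_M$ need not be a direct summand of $H$ (e.g.\ $H=C_{45}$, $M=15$), but you only use the subgroup inclusion, so nothing is affected.
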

\begin{proof}
The group $H$ contains $C_M$. The largest allowed powers satisfy
$(d+1)/p_i<Q_i\le d+1$, whence $Q=\prod_iQ_i>(d+1)^t/M$.
Put $k=Q-1$ and $d_0=\max_i(Q_i-1)\le d$.
If $k\ge d\ge1$, the binomial estimate used in the proof of
Theorem~\ref{thm:uniform} gives
\begin{align*}
 \Vb{d_0}{k}&\le\Vb{d}{k}\le(ek/d)^d\\
 &\le(e2^td^{t-1})^d
 <2^{d((t-1)\log_2R+t+2)}\le2^R=r.
\end{align*}
Here $k<(d+1)^t\le2^td^t$, $d\le R$, and $e<4$.
If $k<d$, use $\Vb{d_0}{k}\le2^k\le2^d\le r$ instead.
Proposition~\ref{prop:CRT-finite} proves \eqref{eq:mixed-explicit}.
Finally $d\sim R/((t-1)\log_2R)$, and
Lemma~\ref{lem:constant-offset} absorbs the fixed difference between
$\Ds(H\oplus C_2^r)$ and $r$ in the $o_H(1)$ term.
\end{proof}

\begin{corollary}[Sublinear corrections for cyclic odd parts]
\label{cor:cyclic-sublinear}
For every fixed odd integer $M\ge3$, as $r\to\infty$,
\begin{equation}\label{eq:cyclic-sublinear}
 \D(C_M\oplus C_2^r)=\Ds(C_M\oplus C_2^r)
 +O_M\!\left(\frac r{\log r}\right).
\end{equation}
In particular,
\[
 \frac{E_{C_{15}}(r)}{\log r}\longrightarrow\infty,
 \qquad \frac{E_{C_{15}}(r)}r\longrightarrow0.
\]
\end{corollary}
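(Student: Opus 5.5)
The upper bound is the real content; the lower bound and the two limit statements follow from results already in the paper.

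\emph{Lower bound.} Write $M=\prod_i p_i^{e_i}$ and let $p$ be any prime divisor of $M$. Since $C_p\le C_M$, Corollary~\ref{lem:simplex} gives $E_{C_M}(r)\ge (p-1)\log r-O_M(1)$. Hence the correction is nonnegative up to $O_M(1)$, which is all that \eqref{eq:cyclic-sublinear} requires from below. For $M=15$, Proposition~\ref{prop:mixed-growth} with $t=2$ gives $E_{C_{15}}(r)\gg(\log r/\log\log r)^2$. This yields $E_{C_{15}}(r)/\log r\to\infty$.

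\emph{Upper bound.} I would use Theorem~\ref{thm:exact-transfer}. It suffices to show that every admissible $f:\F_2^d\to C_M$ with $f^{-1}(0)=\{0\}$ satisfies $\mu_{C_M}(d)\ge 2^{cd}$ for some $c=c_M>0$. Then $\mu_{C_M}(d)-d\le r$ forces $d\le O_M(\log r)$, which in fact gives a polylogarithmic correction, much stronger than $O_M(r/\log r)$. That exponential lower bound on the Fourier support of a unique-zero function modulo a composite $M$ is a BBR-type statement. I am not sure it is available here, and it may even be false, since the composite-modulus constructions are sparse. So I would not rely on it.

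Instead I would aim directly for $d=O(r/\log r)$, via a counting argument in the style of the sphere bound. Take a zero-sum-free sequence of length $L=r+d$, restrict to a $d$-dimensional kernel subspace, and obtain $f(x)=\sum_{i\le L}h_i\ell_i(x)$ with $\ell_i$ linear forms on $\F_2^d$. Let $p$ be a prime dividing $M$ and split $f=(f_p,f')$ into its $C_{p^e}$-component and its complementary component. The zero set of $f_p$ is a set $Z_p\ni0$. On $Z_p$ the complementary component must have a unique zero. I would try induction on the number of prime factors, but the set $Z_p$ is not a subspace, which blocks a direct recursion.

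A cleaner fallback uses the trivial estimate. Each nonzero $x$ needs $f(x)\neq0$, but I would instead bound $d$ through the weight structure: pick the minimum-weight nonzero kernel element in coordinates and argue as in Corollary~\ref{cor:algorithms}. I expect the main obstacle to be producing any nontrivial upper bound at all for composite $M$. My first attempt would be the following. Choose a subspace $U$ of the kernel of dimension $k\approx d/\log d$ on which every $\ell_i$ depends only on boundedly many selected coordinates, by a greedy restriction. Then apply Corollary~\ref{cor:weighted} prime by prime on nested subspaces. The aim is to obtain $d-r\lesssim L/\log L$, that is, $d=O_M(r/\log r)$.
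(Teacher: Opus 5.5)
\textbf{There is a genuine gap: the upper bound is never proved.} Your lower-bound remarks are fine. Indeed, $\D\ge\Ds$ alone handles the lower side of \eqref{eq:cyclic-sublinear}, and Proposition~\ref{prop:mixed-growth} gives $E_{C_{15}}(r)/\log r\to\infty$, as in the paper. But the upper bound $E_{C_M}(r)=O_M(r/\log r)$ is never established. It is also the only source of $E_{C_{15}}(r)/r\to0$.

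Your first route fails. Once $M$ has two distinct prime factors, the proposed exponential bound is false. Via Theorem~\ref{thm:exact-transfer}, a bound $\mu_{C_M}(d)\ge2^{c_Md}$ would give $E_{C_M}(r)=O_M(\log r)$, contradicting Proposition~\ref{prop:mixed-growth}. The bound does hold for prime powers, where it is essentially Corollary~\ref{cor:sphere}.

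The fallback is not an argument, for two reasons.
\begin{itemize}
\item The greedy step cannot hold as stated. Take $M=p$ prime, and suppose every $\ell_i|_U$ were a parity of at most $\beta$ coordinates of $U$. Then the label sum $g$ on $U$ would be an $\F_p$-polynomial of degree at most $\beta$ with $g^{-1}(0)=\{0\}$. Comparing degrees in $\delta_0=1-g^{p-1}$ forces $\dim U\le(p-1)\beta$. Yet Proposition~\ref{prop:simplex} produces kernels of dimension $(p-1)m\to\infty$, so no $U$ of dimension about $d/\log d$ with bounded arity exists in general.
\item The prime-by-prime nesting meets exactly the obstruction you noticed. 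Let $S$ be the sparsity bound of Corollary~\ref{cor:weighted} for the $p$-primary zero indicator. Lemma~\ref{lem:flat} only yields a subspace of dimension about $\log_2(d/\log_2S)$ inside $\{f_p=0\}$. Even with just two primes, bounding that dimension via Corollary~\ref{cor:weighted} for the other prime gives only $d\le L^{O_M(1)}$, which is vacuous.
\end{itemize}

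\textbf{The missing ingredient is external.} The paper does not derive it from its own tools, which control only one odd characteristic at a time. Brakensiek--Gopi--Guruswami~\cite[Corollary~4.24]{BGG} prove the following: a subspace $U\le\F_2^N$ in which $0$ is the only word of Hamming weight divisible by $M$ has $\dim U=O_M(N/\log(N+2))$. This assumes bounded-torsion PFR, which Gowers--Green--Manners--Tao~\cite{GGMT} proved.

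The reduction to this result is short.
\begin{enumerate}
\item Take integer labels $h_i\in\{0,\dots,M-1\}$, and let $V$ be the binary selection kernel, of dimension $d\ge L-r$.
\item Let $T$ repeat the coordinate $x_i$ exactly $h_i$ times. Then $N=\sum_ih_i\le(M-1)L$ and $\wt(Tx)\equiv\sum_ih_ix_i\pmod M$.
\item $T$ is injective on $V$, because $Tx=0$ makes the weighted sum vanish, which forces $x=0$. So $T(V)$ satisfies the hypothesis, and $L-r\le d=O_M(L/\log(L+2))$.
\item This first forces $L\le2r$ for large $L$, and then $L\le r+O_M(r/\log r)$.
\end{enumerate}
Since $\Ds(C_M\oplus C_2^r)=r+2M-1$, both \eqref{eq:cyclic-sublinear} and $E_{C_{15}}(r)/r\to0$ follow.
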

\begin{proof}
Brakensiek--Gopi--Guruswami~\cite[Corollary~4.24, preprint version]{BGG}
show that a binary linear subspace $U\le\F_2^N$ in which zero is
the only word of weight divisible by $M$ satisfies
$\dim U=O_M(N/\log(N+2))$, assuming bounded-torsion PFR.
That hypothesis is supplied by
Gowers--Green--Manners--Tao~\cite[Theorem~1.1]{GGMT}.

Let a zero-sum-free sequence of length $L$ over $C_M\oplus C_2^r$
have cyclic labels $h_i\in\{0,\ldots,M-1\}$. Its binary selection
kernel $V$ has dimension $d\ge L-r$, and
$\sum_i h_ix_i=0\pmod M$ has the unique solution $x=0$ on $V$.
Replace coordinate $x_i$ by $h_i$ copies, omitting it if $h_i=0$.
This linear map $T$ has target length $N=\sum_i h_i\le(M-1)L$
and is injective on $V$: if $Tx=0$, its weighted label sum is zero,
so $x=0$. Moreover $\wt(Tx)\equiv\sum_i h_ix_i\pmod M$.
Applying the cited code bound to $T(V)$ gives
\[
 L-r\le d=O_M\!\left(\frac N{\log(N+2)}\right)
          =O_M\!\left(\frac L{\log(L+2)}\right).
\]
This first gives $L\le2r$ for all sufficiently large $L$, and then
$L\le r+O_M(r/\log r)$. The standard lower bound proves
\eqref{eq:cyclic-sublinear}. For $M=15$, combine this upper bound
with Proposition~\ref{prop:mixed-growth} and
$\Ds(C_{15}\oplus C_2^r)=r+29$.
\end{proof}

\begin{proof}[Proof of Theorem~\ref{thm:main-threshold}]
If $t(H)=0$, then $H\oplus C_2^r$ is a two-group and Olson's
formula applies. If $t(H)=1$, Proposition~\ref{prop:D-upper}
gives the logarithmic upper bound, and Corollary~\ref{lem:simplex}
in a subgroup gives the logarithmic lower bound.
For $t(H)\ge2$, Lemma~\ref{lem:composite} applies; its lower
bound divided by $\log r$ tends to infinity.
\end{proof}

\begin{corollary}[Elementary two-primary parts]\label{cor:elementary}
For every odd prime $p$ and fixed $u\ge0$,
\[
 E_{C_2^u\oplus C_p}(r)=(p-1)\log r+O_{p,u}(1).
\]
In particular the coefficient limit exists and equals $p-1$.
\end{corollary}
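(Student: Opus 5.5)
The plan is to sandwich $E_H(r)$ for $H=C_2^u\oplus C_p$ between matching logarithmic bounds: both the lower and upper coefficient should be $p-1$. The key observation is that for $Q=C_2^u$ we have $b_Q=\D(C_2^u)-1=u$ by Olson's theorem. Proposition~\ref{prop:D-upper} would give coefficient $2^{u}(p-1)$, which is too weak. So the elementary two-primary part must instead be absorbed into the binary rank.

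\emph{Reduction.} For $r\ge0$,
\[
 C_2^u\oplus C_p\oplus C_2^r\cong C_p\oplus C_2^{r+u},
\]
so the Davenport constants coincide:
\[
 \D\bigl((C_2^u\oplus C_p)\oplus C_2^r\bigr)=\D(C_p\oplus C_2^{r+u}).
\]
The same isomorphism equates the $\Ds$ values, since $\Ds$ depends only on the isomorphism class. Hence
\[
 E_{C_2^u\oplus C_p}(r)=E_{C_p}(r+u).
\]

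\emph{Applying the odd case.} Theorem~\ref{thm:dichotomy} with $H=C_p$ and $\D(C_p)-1=p-1$ gives
\[
 E_{C_p}(s)=(p-1)\log s+O_p(1).
\]
Substituting $s=r+u$ and using
\[
 \log(r+u)=\log r+O_u(1)\qquad(r\ge1)
\]
yields
\[
 E_{C_2^u\oplus C_p}(r)=(p-1)\log r+O_{p,u}(1).
\]
Dividing by $\log_2 r$ shows that $\underline\lambda$ and $\overline\lambda$ both equal $p-1$, so the coefficient limit exists.

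\emph{Where care is needed.} The only subtle point is resisting the mixed-characteristic bound: it treats $C_2^u$ as a genuine two-primary label group, whereas here it is just extra binary rank. For a fully explicit remainder, one can instead quote Theorem~\ref{thm:uniform} with $b=p-1$ and rank $r+u\ge16(p-1)$. Its remainder lies in $[-3b,0)$ relative to $b\log_2((r+u)/b)$, which makes the $O_{p,u}(1)$ term explicit.
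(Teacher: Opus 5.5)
Your proposal is correct and follows the paper's route: absorb $C_2^u$ into the binary rank, so that $E_{C_2^u\oplus C_p}(r)=E_{C_p}(r+u)$, and then apply the logarithmic estimates for a single odd prime. The only difference is the citation: the paper uses the $Q=1$ case of Proposition~\ref{prop:D-upper} for the upper bound and Corollary~\ref{lem:simplex} for the lower bound, while you quote Theorem~\ref{thm:dichotomy}, which rests on the same two bounds (the equivalent sphere-covering/uncertainty upper bound plus the simplex construction).
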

\begin{proof}
Absorb $C_2^u$ into the growing binary rank. The case $Q=1$ of
Proposition~\ref{prop:D-upper} gives an upper coefficient $p-1$,
and Corollary~\ref{lem:simplex} gives the same lower coefficient.
The standard lower bound differs from the binary rank by a fixed
constant, and $\log(r+u)=\log r+O_u(1/r)$.
\end{proof}

\begin{corollary}[Nilpotent fiber threshold]\label{cor:fiber-threshold}
For a fixed finite nilpotent group $K$, the following are equivalent:
\begin{enumerate}
\item $t(K)\le1$;
\item for every fixed $\beta$, all nonempty fibers of all products
\eqref{eq:local-word} on all binary affine codes have density at
least $(T+2)^{-c_{K,\beta}}$, with an exponent independent of the
code, scopes, maps, and target.
\end{enumerate}
\end{corollary}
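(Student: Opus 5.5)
The proof splits into the two directions, handled separately.

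\emph{(1) implies (2).} A finite nilpotent group is the direct product of its Sylow subgroups. If $t(K)\le1$, then $K=Q\times P$ with $Q$ a two-group and $P$ a $p$-group for a single odd prime $p$; when $K$ has no odd part we take $P=1$. This is exactly the setting of Theorem~\ref{thm:local}. Its density bound \eqref{eq:local-density} gives
\[
 2^{-D_0}W^{-2^{D_0}},\qquad W=\sum_{j\le\kap(P)}2^{\beta j}\binom Tj\le 2^{\beta\kap(P)}(T+1)^{\kap(P)}.
\]
Since $D_0=\beta\kap(Q)$ is fixed, this lower bound is at least $(T+2)^{-c}$ with $c$ depending only on $K$ and $\beta$. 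The bound is independent of the code, the scopes, the maps and the target, as (2) requires. Alternatively one can cite Corollary~\ref{cor:IKKW-density}, but the explicit bound is cleaner.

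\emph{(2) implies (1).} We argue by contraposition: assume $t(K)\ge2$, pick two distinct odd primes $p_1,p_2$ dividing $|K|$, and let $g_1,g_2$ be elements of orders $p_1$ and $p_2$. Being in different Sylow subgroups, they commute. Hence $g=g_1g_2$ has order $M=p_1p_2$, and $\langle g\rangle\cong C_M$ is abelian. We take $\beta=1$ and the weighted-sum products $F(x)=\prod_i g^{c_ix_i}$, with each $g_i$ a map of one bit; these are exactly the weighted $C_M$-sums of the code correspondence. The goal is a singleton fiber of dimension superlogarithmic in $T$, following the proof of Lemma~\ref{lem:composite}. For each large $B$ that construction gives a binary linear code of dimension $d\gg B^2$ and length $L\le\Vb{B}{d}$, on which the weighted $C_M$-sum has the unique zero $x=0$. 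Here $\log L=O(B\log B)$. So the fiber over the identity has density $2^{-d}$ with $T=L$, while
\[
 d\gg B^2\gg\Bigl(\frac{\log L}{\log\log L}\Bigr)^2 .
\]
This grows faster than $c\log(T+2)$ for any fixed $c$, so (2) fails already for $\beta=1$.

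The main obstacle is bookkeeping in the second direction. The product must be a legitimate instance of \eqref{eq:local-word}: order is irrelevant inside the abelian subgroup $\langle g\rangle$, the code is linear, and the target is the identity. The density must also be measured on the code itself rather than on its ambient cube; since the fiber is the single point $0$ in a code of dimension $d$, the density is $2^{-d}$. With these checks, comparing $2^{-d}$ against $(T+2)^{-c}$ finishes the argument.
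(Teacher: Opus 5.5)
Your proposal is correct and follows the paper's proof. For (1)$\Rightarrow$(2) you use the Sylow decomposition $K=Q\times P$ together with Theorem~\ref{thm:local}; for the converse you use the one-bit weighted composite-modulus codes of Lemma~\ref{lem:composite}, whose identity fiber is a singleton in dimension $d$ with $d/\log(T+2)\to\infty$. Using only two odd primes (so $d\gg B^2$ rather than $d\gg B^t$) and making the polynomial bound on $W$ explicit are harmless variations.
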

\begin{proof}
If $t(K)\le1$, the Sylow decomposition is $K=Q\times P$ with
$Q,P$ as in Theorem~\ref{thm:local}; either that theorem or
Corollary~\ref{cor:IKKW-density} applies.
If $t(K)=t\ge2$, the Sylow direct product contains a cyclic
subgroup of order $M$, the product of its distinct odd prime
divisors. The weighted codes in Lemma~\ref{lem:composite} give
one-bit local factors with a singleton identity fiber. Its density
is $2^{-d}$, while $d\gg B^t$ and
$\log(T+2)=O(B\log B)$. Hence $d/\log(T+2)\to\infty$,
contradicting any uniform polynomial density bound.
\end{proof}

The last equivalence concerns solution density. It does not by itself
assert computational hardness when the density bound fails.

\subsection{An explicit upper bound for \texorpdfstring{$C_{24}$}{C24}}\label{subsec:C24-upper}
For $C_{24}\cong C_8\oplus C_3$, the general coefficient in
\eqref{eq:D-upper-asymptotic} is $2^7\cdot2=256$.
A preliminary mod-four extraction reduces the binary degree
needed in the final intersection step.

\begin{lemma}[A mod-four zero subcode]\label{lem:mod4}
Let $V\le\F_2^L$ have dimension $d$, and let
$q(x)=\sum_i q_ix_i\pmod4$. There is a subcode $U\le V$
with $q|_U=0$ and
\[
 \dim U\ge\max\{0,\lfloor(d-2)/2\rfloor\}.
\]
\end{lemma}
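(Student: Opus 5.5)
The plan is to peel off the two binary digits of $q$ one at a time: first pass to the kernel of the parity of $q$, then recognize the remaining bit as an $\F_2$-quadratic form and extract a totally singular subspace of it. Throughout, $q(x)=\sum_i q_ix_i$ is computed with the bits $x_i\in\{0,1\}$ read as integers; we assume $d\ge1$, the case $d=0$ being trivial.

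\emph{Step 1 (mod two).} The map $x\mapsto q(x)\bmod 2=\sum_i q_ix_i\bmod2$ is $\F_2$-linear on $V$. Its kernel $V_1$ therefore has dimension $n\ge d-1$, and on $V_1$ the function $q$ takes only the values $0$ and $2$ modulo four.

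\emph{Step 2 (a quadratic form).} Define $\Phi:V_1\to\F_2$ by $q(x)\equiv2\Phi(x)\pmod 4$. For binary vectors $x,y$, integer addition gives $(x+y)_i=x_i+y_i-2x_iy_i$. Hence
\[
 q(x+y)\equiv q(x)+q(y)-2\sum_i q_ix_iy_i\pmod 4 .
\]
Dividing by two on $V_1$ yields
\[
 \Phi(x+y)=\Phi(x)+\Phi(y)+\beta(x,y),\qquad \beta(x,y)=\sum_i q_ix_iy_i\bmod 2 .
\]
Here $\beta$ is a symmetric bilinear form on $V_1$, and $\Phi(0)=0$. So $\Phi$ is a quadratic form with polar form $\beta$; concretely, it is a polynomial of degree at most two in linear coordinates on $V_1$. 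It remains to find a subspace $U\le V_1$ with $\Phi|_U=0$, since such a $U$ has $q|_U=0$.

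\emph{Step 3 (totally singular subspace, by induction).} Let $f(n)$ be the least possible dimension of a maximal $\Phi$-singular subspace over all quadratic forms on $n$-dimensional spaces. I claim $f(n)\ge\lfloor(n-1)/2\rfloor$, proved by induction in steps of two; the cases $n\le2$ are trivial.

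For $n\ge3$, first find a nonzero singular vector. By Chevalley--Warning, since $\Phi$ has degree $2<n$, the number of zeros of $\Phi$ is even. As $0$ is a zero, some $v\ne0$ also has $\Phi(v)=0$.

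Next let $v^{\perp}=\{w:\beta(v,w)=0\}$. This subspace has dimension at least $n-1$ and contains $v$, because $\beta(v,v)=2\Phi(v)=0$. Choose a complement $W$ of $\langle v\rangle$ inside $v^\perp$; it has dimension at least $n-2$. By induction, $W$ contains a $\Phi$-singular subspace $U'$ of dimension at least $\lfloor(n-3)/2\rfloor$. Every $w\in U'$ satisfies
\[
 \Phi(w+v)=\Phi(w)+\Phi(v)+\beta(w,v)=0 .
\]
Hence $U'\oplus\langle v\rangle$ is singular of dimension at least $\lfloor(n-1)/2\rfloor$.

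\emph{Conclusion.} With $n\ge d-1$, Step 3 gives $\dim U\ge\lfloor(d-2)/2\rfloor$, and we include the $\max\{0,\cdot\}$ for small $d$. The only delicate point is Step 2: one must verify that the carry term is exactly $2\sum_i q_ix_iy_i$, so that $\Phi$ is genuinely quadratic. Once that is established, the rest is the standard Witt-index count for binary quadratic forms.
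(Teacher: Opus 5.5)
Your proof is correct, and it reaches the same bound by a different mechanism than the paper. Both arguments rest on the carry identity $q(x\oplus y)=q(x)+q(y)-2\sum_i q_ix_iy_i$, and both extend a zero span one vector at a time.

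The paper works greedily in all of $V$. Given a zero span of $v_1,\dots,v_j$, it imposes $q(x)\equiv0\pmod4$ together with the $j$ conditions $2\sum_iq_ix_i(v_s)_i\equiv0\pmod4$. It treats these as one weighted constraint into $C_4\oplus C_2^j$ and bounds the degree of the common zero indicator by $\D(C_4\oplus C_2^j)-1=3+j$, using the augmentation-ideal expansion. Lemma~\ref{lem:binary-weight} then gives more than $2^j$ solutions whenever $d>3+2j$.

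You instead split off the parity first, losing one dimension. You then recognize $q/2$ on that kernel as a genuine $\F_2$-quadratic form with polar form $\beta$. From there you run the classical Witt-index recursion: find one nonzero singular vector, pass to a complement of it inside its $\beta$-orthogonal space, and recurse, losing two dimensions per step.

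Your route is self-contained linear algebra with no group-algebra input. It also explains the bound conceptually: $\lfloor(n-1)/2\rfloor$ is the minimal Witt index of a binary quadratic form in dimension $n$. As a side benefit, it gives $\lfloor(d-1)/2\rfloor$ when $q$ is already even on $V$.

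The paper's route stays inside its degree-counting framework. That template applies verbatim to weighted constraints into other abelian $2$-groups, where no quadratic-form structure is available.

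Two small remarks on your write-up:
\begin{enumerate}
\item Since $\dim W$ may be $n-1$ rather than $n-2$, phrase the induction as strong induction, or restrict to an $(n-2)$-dimensional subspace of $W$. This is cosmetic.
\item Your Chevalley--Warning step can be replaced by Lemma~\ref{lem:binary-weight}. The function $1+\Phi$ has degree at most two and is nonzero at $0$, so $\Phi$ has at least $2^{n-2}\ge2$ zeros when $n\ge3$.
\end{enumerate}
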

\begin{proof}
Suppose $v_1,\ldots,v_j$ span a zero subcode. To extend it impose
\[
 q(x)=0\pmod4,\qquad
 2\sum_iq_i x_i(v_s)_i=0\pmod4\quad(1\le s\le j).
\]
These are weighted binary constraints into $C_4\oplus C_2^j$.
Their common zero indicator has binary degree at most $3+j$
by \eqref{eq:abelian-kappa} and the group-algebra expansion.
It is one at zero. Lemma~\ref{lem:binary-weight} gives at least
$2^{d-(3+j)}$ points in its support. If $d>3+2j$, this is larger than the old span,
so an extending vector exists.
The identity
\[
 q(x\oplus y)=q(x)+q(y)-2\sum_iq_ix_iy_i\pmod4
\]
shows that the enlarged span is again zero. Iteration gives the
stated dimension.
\end{proof}

\begin{lemma}[The residual cubic]\label{lem:residual-cubic}
Let $q(x)=\sum_iq_ix_i\pmod8$ on a binary linear code $U$.
If every value is divisible by four, then $q/4:U\to\F_2$
has algebraic degree at most three.
\end{lemma}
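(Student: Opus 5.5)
We propose to work with the expansion of $q$ in terms of the binary coordinates of $U$, tracking 2-adic digits. Parametrize $U$ linearly by $z\in\F_2^k$, so each coordinate $x_i=\ell_i(z)$ is a binary linear form taken as an integer in $\{0,1\}$. The plan is to show that the map $z\mapsto q(z)\bmod 8$ is given by an integer polynomial whose pieces have controlled binary degree. The key identity will be the integer formula for an XOR sum:
\[
 \Bigl[\textstyle\sum_{s\in J}z_s\bmod 2\Bigr]
 =\sum_{\emptyset\ne I\subseteq J}(-2)^{|I|-1}\prod_{s\in I}z_s .
\]
This follows from $a\oplus b=a+b-2ab$ by induction. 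Modulo $8$, only the terms with $|I|\le3$ survive. Hence each $\ell_i(z)$ agrees mod $8$ with an integer polynomial $P_i$ of degree at most $3$. Its degree-$j$ part carries the factor $(-2)^{j-1}$.

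Summing, $q(z)\equiv\sum_I c_I\prod_{s\in I}z_s\pmod 8$, with $|I|\le3$ and $c_I$ divisible by $2^{|I|-1}$. The idea is to recover the binary expansion of $q$ from these coefficients. Write $c_I=2^{|I|-1}c'_I$. Then $q\bmod 8$ is a sum of monomial pieces of degree at most $3$, whose coefficients reduce to a multiple of $2^{|I|-1}$ mod $8$.

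Rather than carrying digits by hand, we will use a cleaner route via finite differences. Over $\F_2$, a function $g:\F_2^k\to\F_2$ has degree at most $3$ iff every fourth-order difference $\Delta_{a}\Delta_b\Delta_c\Delta_e g$ vanishes. It is therefore enough to show that the integer-valued function $Q(z)=\sum_I c_I z^I$ satisfies two things. First, its $\Z$-valued fourth derivatives (in the additive sense, with shifts $z\mapsto z\oplus a$) vanish modulo $8\cdot 2=16$ in the right sense. Second, $q/4$ then has vanishing $\F_2$-fourth differences.

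A more direct alternative we would try first is Lemma~\ref{lem:binary-weight}-style group-algebra reasoning, as in Lemma~\ref{lem:mod4}. The indicator of $q\equiv4\pmod8$ on $U$ equals the coefficient of $[4]$ in $\prod_i(1+x_i([q_i]-1))$ in $\F_2[C_8]$. Here $\kappa(C_8)=7$, which only gives degree $7$, so this is too weak. The hypothesis $q(U)\subseteq\{0,4\}$ must be used.

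The simplest route we would commit to is the following. Since $q$ takes values in $4\Z/8\Z$, write $q/4\equiv\frac14\sum_I c_I z^I\pmod 2$. Every coefficient $c_I$ with $|I|=3$ is divisible by $4$, and its contribution $(c_I/4)z^I$ is a cubic over $\F_2$. The lower-degree part $R=\sum_{|I|\le2}c_Iz^I$ is an integer quadratic whose values are all $\equiv0\pmod4$ on $\{0,1\}^k$ modulo the cubic part. We must show $R/4\bmod2$ has degree at most $3$.

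For this, apply the digit formula again to the integer polynomial $R$ restricted to the cube. Any integer-valued function on $\{0,1\}^k$ expands uniquely as $\sum_I m_I z^I$ with integer $m_I$, via Möbius inversion. Moreover, a function taking values in $4\Z$ at all cube points of degree $\le3$ has $m_I\in 4\Z$ unless ... The main obstacle is exactly this step. We will handle it by noting that the Möbius coefficients are $m_I=\sum_{J\subseteq I}(-1)^{|I|-|J|}Q(1_J)$, and that $Q$ mod $8$ has degree $\le3$ as an integer polynomial. Then $m_I\equiv0\pmod8$ for $|I|\ge4$. Since all values lie in $4\Z$, we get $m_I\equiv0\pmod4$ for every $I$ (induction on $|I|$). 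Hence
\[
 q/4\equiv\sum_{|I|\le3}(m_I/4)z^I\pmod2,
\]
which has degree at most three.
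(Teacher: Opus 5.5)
Your final argument is correct and is essentially the paper's proof. You expand each parity as an integer polynomial whose degree-$j$ coefficients are divisible by $2^{j-1}$, truncate modulo eight to degree at most three, and use M\"obius inversion to show that all coefficients are divisible by four; then you divide by four and reduce modulo two. Each $m_I$ is a signed sum of values in $4\Z$, so no induction is needed, and the detours through finite differences and the $\F_2[C_8]$ expansion can simply be dropped.
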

\begin{proof}
In basis coordinates $z$ on $U$, every original bit is a parity.
Its integer multilinear coefficient on a monomial of size $j$
is divisible by $2^{j-1}$. Consequently $q$ has a multilinear
representation modulo eight with no terms of degree at least four.
Since all its values are divisible by four, integer M\"obius
inversion shows that all its coefficients are divisible by four
modulo eight. Divide by four and reduce modulo two.
\end{proof}

\begin{theorem}[Explicit $C_{24}$ bounds]\label{thm:C24-upper}
For any weighted $C_{24}$-sum on a binary affine code of length
$L$ and dimension $d$, every attainable fiber has at least
\begin{equation}\label{eq:C24-density}
 \frac{2^d}{512\,\Vb{2}{L}^{16}}
\end{equation}
points. A singleton fiber implies
\begin{equation}\label{eq:C24-isolation}
 d\le9+16\log\Vb{2}{L}.
\end{equation}
For every $r\ge512$,
\begin{align}
 \D(C_{24}\oplus C_2^r)
 &\le r+10+\lfloor16\log(2r^2+r+1)\rfloor,
       \label{eq:C24-upper-finite}\\
 E_{C_{24}}(r)&\le32\log r+3.\label{eq:C24-upper-log}
\end{align}
\end{theorem}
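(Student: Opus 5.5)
The plan is to prove the singleton bound \eqref{eq:C24-isolation} first, then deduce the density bound \eqref{eq:C24-density} by the coset argument from the proof of Theorem~\ref{thm:main-intersection}, and finally read off the Davenport bounds as in Proposition~\ref{prop:D-upper}. Write $C_{24}\cong C_8\oplus C_3$, and split a weighted sum into a mod-$8$ part $q(x)=\sum_iq_ix_i$ and an $\F_3$-part.

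\emph{Reduction to linear codes and zero fibers.} Fix a point $x_0$ in the fiber. Substitute $x=x_0\oplus y$ and use $x_{0,i}\oplus y_i=x_{0,i}+(1-2x_{0,i})y_i$. The weighted sum becomes a constant plus a weighted sum in $y$ with labels $\pm h_i$, on a linear code of the same length and dimension. The fiber through $x_0$ then becomes the zero fiber, so it suffices to treat zero fibers of linear codes.

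\emph{Singleton step.} Suppose the zero fiber on a linear code $W$ of dimension $k$ is $\{0\}$.
\begin{itemize}
\item Apply Lemma~\ref{lem:mod4} to $q\bmod 4$ on $W$. This gives $U\le W$ with $q|_U\equiv0\pmod4$ and $\dim U\ge\lfloor(k-2)/2\rfloor\ge(k-3)/2$.
\item On $U$, Lemma~\ref{lem:residual-cubic} shows that $a=1+q/4$ is a binary function of degree at most $3$, and it is exactly the indicator of $q\equiv0\pmod8$.
\item The $\F_3$-indicator $b$, obtained by expanding $\prod_i(1+x_i([h_i]-1))$ in $\F_3[C_3]$ with $\kap(C_3)=2$, has $\Sp(b)\le\Vb{2}{L}$ after restriction.
\item The zero fiber restricted to $U$ is $\{a=1,b\ne0\}=\{0\}$. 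Theorem~\ref{thm:main-intersection} with $D=3$ gives $\dim U\le3+8\log\Vb{2}{L}$.
\end{itemize}
Combining the two bounds on $\dim U$ gives $k\le9+16\log\Vb{2}{L}$, which is \eqref{eq:C24-isolation}.

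\emph{Density.} In the translated setting, take an inclusion-maximal linear subspace $W\le V$ meeting the fiber only in $0$. Its dimension $k$ obeys the singleton bound, because the restriction is again a weighted sum of length $L$. As in Theorem~\ref{thm:main-intersection}, every coset of $W$ meets the fiber. Hence the fiber has at least $2^{d-k}\ge2^d/(512\,\Vb{2}{L}^{16})$ points.

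\emph{Davenport bounds.} A zero-sum-free sequence of length $L$ has a selection kernel of dimension at least $L-r$ with singleton zero fiber (Proposition~\ref{prop:code}). This forces $L-r\le9+16\log\Vb{2}{L}$. Take $L=r+10+\lfloor16\log(2r^2+r+1)\rfloor$. For $r\ge512$ one checks $L\le2r$, since the logarithmic term is about $304\ll r$, so $\Vb{2}{L}\le\Vb{2}{2r}=2r^2+r+1$, contradicting the forced inequality. This gives \eqref{eq:C24-upper-finite}. For \eqref{eq:C24-upper-log}, the invariant factors are $C_2^{r-1}\oplus C_{24}$, so $\Ds=r+23$. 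Then
\[
E_{C_{24}}(r)\le\lfloor16\log(2r^2+r+1)\rfloor-13,
\]
and $16\log(2r^2+r+1)=16+32\log r+16\log(1+1/(2r)+1/(2r^2))$, where the last term is far below $1$ for $r\ge512$. This yields $E_{C_{24}}(r)\le32\log r+3$.

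\emph{Main obstacle.} The delicate part is the bookkeeping in the singleton step. The floor in Lemma~\ref{lem:mod4} must give exactly the constant $9$. One must also check that the translated labels $\pm h_i$ keep the mod-$4$ and mod-$8$ structure used by Lemmas~\ref{lem:mod4} and~\ref{lem:residual-cubic}; they do, since these lemmas allow arbitrary integer weights $q_i$.
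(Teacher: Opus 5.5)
Your route is the paper's: translate to a zero fiber on a linear code, then pass to the mod-$4$ zero subcode of Lemma~\ref{lem:mod4}. Use Lemma~\ref{lem:residual-cubic} to get a cubic binary indicator and apply Theorem~\ref{thm:main-intersection} with $D=3$ and $S=\Vb{2}{L}$. Finish with the maximal isolating subspace for \eqref{eq:C24-density} and the argument of Proposition~\ref{prop:D-upper} for \eqref{eq:C24-upper-finite}. All of that is correct. Your inequality $\lfloor(k-2)/2\rfloor\ge(k-3)/2$ also absorbs the small-dimensional case that the paper sets aside.

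The error is in the last step. The invariant factor decomposition of $C_{24}\oplus C_2^r$ is $C_2^r\oplus C_{24}$, not $C_2^{r-1}\oplus C_{24}$. The factors $C_8$ and $C_3$ merge into $C_{24}$, and all $r$ copies of $C_2$ survive, so $\Ds(C_{24}\oplus C_2^r)=r+24$. This is not cosmetic. With your value you only get $E_{C_{24}}(r)\le3+\lfloor32\log r+\epsilon\rfloor$, where $\epsilon=16\log(1+1/(2r)+1/(2r^2))>0$. The floor does not absorb $\epsilon$ when $32\log r$ lies just below an integer, so ``the last term is far below $1$'' is not enough. For example, at $r=523$ one has $\lfloor16\log(2r^2+r+1)\rfloor=305$, so your bound gives $E\le292$, while $32\log523+3\approx291.98$. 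With the correct $\Ds=r+24$ you get
\[
E_{C_{24}}(r)\le\lfloor16\log(2r^2+r+1)\rfloor-14\le2+32\log r+\epsilon<32\log r+3,
\]
since $\epsilon<1$ for $r\ge512$. This is how the paper concludes.
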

\begin{proof}
First suppose the zero fiber on a linear code is $\{0\}$.
Apply Lemma~\ref{lem:mod4} to the reduction modulo four of its
$C_8$-sum. On a subcode $U$ of dimension
$k\ge\max\{0,\lfloor(d-2)/2\rfloor\}$, the remaining $C_8$
zero indicator is a binary polynomial of degree at most three,
by Lemma~\ref{lem:residual-cubic}. The $C_3$ zero indicator has
Fourier support at most $\Vb{2}{L}$.
Theorem~\ref{thm:main-intersection} gives
$k\le3+8\log\Vb{2}{L}$. Since $d\le2k+3$, apart from the
harmless case $d\le3$, this proves \eqref{eq:C24-isolation}.

Translate an arbitrary attained affine target to zero. Toggling
a bit $x_i$ about a fixed solution replaces its weight by
$(-1)^{(x_0)_i}h_i$ and subtracts the target. Thus the singleton
bound is stable under every affine restriction. The maximal
isolating-subspace argument in the proof of
Theorem~\ref{thm:main-intersection} gives
\eqref{eq:C24-density}.

Use the proof of Proposition~\ref{prop:D-upper} with
$A(L)=9+16\log\Vb{2}{L}$. Now
$\Vb{2}{2r}=2r^2+r+1$, and the necessary finite cutoff holds
for $r\ge512$. Indeed
$A(2r)\le41+32\log r$, and
$r-42-32\log r$ is positive at $512$ and increasing thereafter.
This proves \eqref{eq:C24-upper-finite}. Finally
\[
 16\log(2r^2+r+1)
 \le32\log r+16+16\log(1+1/r)
 <32\log r+17
\]
in this range. Since $\Ds(C_{24}\oplus C_2^r)=r+24$,
\eqref{eq:C24-upper-log} follows.
\end{proof}

\subsection{Direct comparison with a covering criterion}
\label{subsec:covering}
Han--Zhang's covering criterion~\cite[Proposition 2.4]{HZ}, applied
with odd primary part $C_3$ and complementary group
$C_8\oplus C_2^r$, requires
\begin{equation}\label{eq:HZ-C24}
 8\,2^r\sum_{j=0}^2\binom Nj7^{N-j}<8^N.
\end{equation}
At $N=r+O(\log r)$ even the $j=0$ term prevents this inequality:
its ratio to $8^N$ has binary logarithm
$3+r-N\log(8/7)$, which tends to infinity.
Thus this direct substitution does not give
\eqref{eq:C24-upper-log}. This observation concerns that
substitution, not a non-implication claim about every possible use
of the criterion. The alternative qualitative derivation through
fixed-group programs was established in Section~\ref{sec:programs}.
 
\section{Entropy lower bounds from finite configurations}\label{sec:entropy}
We now turn from fiber lower bounds to constructions of singleton
fibers. The following exact identity permits the internal
prime-power scale to grow while the odd target group remains fixed.

\subsection{An exact prime-power detector}
For $U\subseteq[n]$, write
$\ell_U(x)=\bigoplus_{i\in U}x_i\in\{0,1\}$ and
$\chi_U(x)=(-1)^{\ell_U(x)}$.

\begin{lemma}[Truncated Fourier detector]\label{lem:detector}
Let $p$ be an odd prime, $k\in\Z_{\ge1}$, and $A\in\Z_{\ge2}$.
Set $t=p^k$ and $n=At-1$.
Define $b(x)=\Id_{t\mid\wt(x)}$ on $\F_2^n$, with values in $\F_p$.
Then
\begin{equation}\label{eq:detector-fourier}
 b(x)=\sum_{\substack{U\subseteq[n]\\|U|<t}}\chi_U(x).
\end{equation}
Every displayed Fourier coefficient is exactly one; all others
are zero. Moreover
\begin{equation}\label{eq:detector-parity}
 2\sum_{\substack{U\subseteq[n]\\1\le|U|<t}}\ell_U(x)=1-b(x)
 \quad\text{in }\F_p.
\end{equation}
\end{lemma}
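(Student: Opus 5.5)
The plan is to verify \eqref{eq:detector-fourier} pointwise by a generating-function computation over $\F_p$, and then read off \eqref{eq:detector-parity} from it. The characters form a basis of functions $\F_2^n\to\F_p$, so once the pointwise identity holds, the Fourier coefficients of $b$ are exactly as displayed: one for $|U|<t$ and zero otherwise. Both sides of \eqref{eq:detector-fourier} are symmetric, so it suffices to check the identity at each weight $w=\wt(x)$, $0\le w\le n$.

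For $x$ of weight $w$ we have $\chi_U(x)=(-1)^{|U\cap\supp x|}$, hence
\[
 \sum_{U\subseteq[n]}z^{|U|}\chi_U(x)=(1-z)^w(1+z)^{n-w}.
\]
The sum over $|U|<t$ is the sum of the coefficients of $z^0,\ldots,z^{t-1}$. It is therefore the coefficient of $z^{t-1}$ in $(1-z)^w(1+z)^{n-w}(1-z)^{-1}$, computed as a power series. Since we only need coefficients up to $z^{t-1}$, we may work modulo $z^t$ in $\F_p[[z]]$. The key tool there is Frobenius: $(1\pm z)^t=1\pm z^t\equiv1$, because $t=p^k$.

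We split into cases according to $w=at+c$ with $0\le c<t$.
\begin{enumerate}
\item If $c\ge1$, write $w-1=at+(c-1)$ and $n-w=(A-a-1)t+(t-1-c)$. Modulo $z^t$ the series becomes the polynomial $(1-z)^{c-1}(1+z)^{t-1-c}$. This has degree $t-2$, so the $z^{t-1}$ coefficient is $0=b(x)$.
\item If $c=0$ and $a\ge1$, then $a\le A-1$ because $w\le n$. Now $w-1=(a-1)t+(t-1)$ and $n-w=(A-a-1)t+(t-1)$. Modulo $z^t$ the series is $(1-z^2)^{t-1}$. Using $(1-z^2)^t=1-z^{2t}$, this equals $(1-z^2)^{-1}$ modulo $z^{2t}$. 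Since $t-1$ is even, its $z^{t-1}$ coefficient is $1=b(x)$.
\item If $w=0$, the series is $(1+z)^n(1-z)^{-1}\equiv(1+z)^{t-1}(1-z)^{-1}\equiv(1+z)^{-1}(1-z)^{-1}=(1-z^2)^{-1}$ modulo $z^t$. Again the coefficient is $1$.
\end{enumerate}
The only delicate point is this bookkeeping of exponents modulo $t$, and the hypothesis $A\ge2$ (so that $n\ge 2t-1$) is what keeps all exponents nonnegative.

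For \eqref{eq:detector-parity}, substitute $\chi_U=1-2\ell_U$ into \eqref{eq:detector-fourier}. This gives
\[
 b(x)=N-2\sum_{|U|<t}\ell_U(x),\qquad N=\Vb{t-1}{n}.
\]
Evaluating at $x=0$ gives $N=b(0)=1$ in $\F_p$. The term with $U=\emptyset$ vanishes because $\ell_\emptyset=0$. Rearranging yields \eqref{eq:detector-parity}.
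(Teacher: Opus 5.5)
Your proof is correct, but it works on the opposite side of the Fourier transform from the paper's argument.

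The paper computes each coefficient $\widehat b(U)$ directly. For $|U|=j=qt+s$, the unnormalized coefficient is the sum of the coefficients of $(1-z)^j(1+z)^{n-j}$ at exponents divisible by $t$. Frobenius factors this polynomial as $(1-z^t)^q(1+z^t)^{A-q-1}(1-z)^s(1+z)^{t-1-s}$. Only the constant term of the degree-$(t-1)$ tail can reach such exponents, and setting $y=z^t=1$ gives $2^{A-1}\ind_{\{q=0\}}$. The normalization $2^{A-1-n}=2^{A(1-t)}=1$ then needs Fermat's theorem via $p-1\mid t-1$.

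You instead evaluate the proposed synthesis sum $\sum_{|U|<t}\chi_U(x)$ weight by weight. The factor $(1-z)^{-1}$ turns the truncation $|U|<t$ into a single coefficient, you reduce modulo $z^t$, and you obtain the coefficient statement from the fact that the characters form a basis. The two filters are dual: exponents divisible by $t$ in the paper, exponents below $t$ in yours. Your version never meets the factor $2^{-n}$, so Frobenius and the oddness of $t$ suffice and Fermat's theorem is not needed. The cost is a three-case split on $w\bmod t$, whereas the paper's is a single uniform computation that reads off each coefficient explicitly.

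One minor correction to your remark after the case analysis: $A\ge2$ is not what keeps the exponents nonnegative. In your first two cases, $a\le A-1$ already follows from $w\le n$, and the case $w=0$ needs only $A\ge1$. For $A=1$ your second case is vacuous and the lemma still holds. The sum then runs over all $U$ and equals $2^{t-1}\delta_0(x)=\delta_0(x)=b(x)$ in $\F_p$. The hypothesis $A\ge2$ matters for the entropy estimate \eqref{eq:entropy-binomial}, not for this lemma.
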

\begin{proof}
Fix a frequency $U$ of size $j$, and write $j=qt+s$ with
$0\le s<t$. The unnormalized Fourier coefficient of $b$ is
the sum of all coefficients whose exponent is divisible by $t$
in the polynomial
\[
 (1-z)^j(1+z)^{n-j}.
\]
In characteristic $p$, Frobenius gives the factorization
\[
 (1-z^t)^q(1+z^t)^{A-q-1}
 (1-z)^s(1+z)^{t-1-s}.
\]
The final factor has degree $t-1$ and constant term one.
Only its constant term can contribute to exponents divisible
by $t$. The sum of the desired coefficients is therefore
\[
 \left.(1-y)^q(1+y)^{A-q-1}\right|_{y=1}
 =\begin{cases}2^{A-1},&q=0,\\0,&q\ge1.\end{cases}
\]
After normalization, the nonzero coefficient is
$2^{A-1-n}=2^{A(1-t)}=1$ in $\F_p$, since $p-1\mid t-1$.
This proves \eqref{eq:detector-fourier}.

Evaluating at zero gives
$\sum_{j=1}^{t-1}\binom nj=0$ in $\F_p$.
Substitute $\chi_U=1-2\ell_U$ into
\eqref{eq:detector-fourier} to obtain
\eqref{eq:detector-parity}.
\end{proof}

The output in \eqref{eq:detector-parity} is always zero or one
in the same fixed field $\F_p$. The growing number $t=p^k$ is
a weight-divisibility scale, not the order of the target group.

\subsection{The configuration transfer theorem}
Let $Q$ be a finite abelian two-group. A finite list
$(q_i,a_i)_{i=1}^s$, with $q_i\in Q$ and $a_i\in\Z_{\ge1}$,
will be called an admissible configuration if, for all
$(z_1,\ldots,z_s)\in\Z^s$,
\begin{equation}\label{eq:box-condition}
 0\le z_i\le a_i\ (1\le i\le s),\quad
 \sum_i z_iq_i=0
 \quad\Longrightarrow\quad z_1=\cdots=z_s=0.
\end{equation}
Equivalently, the multiset with $a_i$ copies of each $q_i$ is
zero-sum-free. The same label may occur in more than one block.

\begin{theorem}[Transfer from a finite configuration]\label{thm:box}
Let $P=C_p^v$, where $p$ is an odd prime and $v\in\Z_{\ge1}$, and suppose
$(q_i,a_i)_{i=1}^s$ is admissible in $Q$, with
$1\le s\le v(p-1)$. For each $k\in\Z_{\ge1}$, put
\[
 A_i=a_i+1,\qquad t=p^k,\qquad n_i=A_it-1,
 \qquad d_k=\sum_i n_i,
 \qquad R_k=\sum_{i=1}^s\sum_{j=1}^{t-1}\binom{n_i}{j}.
\]
There is a zero-sum-free sequence of length $R_k+d_k$ over
$Q\oplus P\oplus C_2^{R_k}$. Consequently
\begin{equation}\label{eq:box-entropy}
 \overline\lambda(Q\oplus P)\ge
 \frac{\sum_i A_i}{\max_i\{A_iH_2(1/A_i)\}}.
\end{equation}
\end{theorem}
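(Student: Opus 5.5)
The construction is a graph code built from Lemma~\ref{lem:detector}, together with the code correspondence of Proposition~\ref{prop:code}. The entropy bound then follows from standard binomial estimates.

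\emph{Setup.} Take the binary parameter space $\F_2^{d_k}=\bigoplus_{i=1}^s\F_2^{n_i}$, and write $x=(x^{(1)},\ldots,x^{(s)})$. Let $T:\F_2^{d_k}\to\F_2^{R_k}$ be the linear map recording all parities $\ell_U(x^{(i)})$ with $U\subseteq[n_i]$ and $1\le|U|<t$, for every block $i$. By the remark after Proposition~\ref{prop:code}, the graph code $\{(x,Tx)\}$ has length $d_k+R_k$, dimension $d_k$ and parity-check rank exactly $R_k$. So any labelling with a unique zero of the weighted sum gives a zero-sum-free sequence of length $R_k+d_k$ over $Q\oplus P\oplus C_2^{R_k}$.

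\emph{Labels.} Since $s\le v(p-1)$, choose $g_1,\ldots,g_s\in P=C_p^v$ forming a zero-sum-free sequence, for instance each standard basis vector repeated up to $p-1$ times. Give every parity coordinate of block $i$ the label $(0,2g_i)$. By \eqref{eq:detector-parity}, the $P$-component of the weighted sum is then
\[
\sum_i\bigl(1-b_i(x^{(i)})\bigr)g_i,
\]
where $b_i$ is the indicator of $t\mid\wt(x^{(i)})$. Each coefficient $1-b_i$ is $0$ or $1$, so zero-sum-freeness of the $g_i$ shows that this component vanishes exactly when $t\mid\wt(x^{(i)})$ for all $i$.

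Since $t$ is odd, multiplication by $t$ is an automorphism of the two-group $Q$. Give each original bit of block $i$ the label $(t^{-1}q_i,0)$. On the locus where $t\mid\wt(x^{(i)})$ for all $i$, write $\wt(x^{(i)})=z_it$. Then $0\le z_i\le a_i$, because $n_i=A_it-1<(a_i+1)t$. The $Q$-component of the sum is $\sum_iz_iq_i$. By \eqref{eq:box-condition} it vanishes only when every $z_i=0$, that is, only at $x=0$. So the weighted sum has a unique zero, which gives the first assertion.

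\emph{Entropy bound.} By Lemma~\ref{lem:constant-offset}, $\Ds(Q\oplus P\oplus C_2^{R_k})=R_k+O(1)$. Hence, with $r=R_k$,
\[
E_{Q\oplus P}(R_k)\ge d_k-O(1)=t\sum_iA_i-s-O(1).
\]
For the denominator, use $\sum_{j<t}\binom nj\le2^{nH_2(t/n)}$, valid when $t/n\le1/2$; this holds since $A_i\ge2$. This gives
\[
\log_2R_k\le\log_2 s+\max_i n_iH_2(t/n_i).
\]
Here $t/n_i=1/(A_i-1/t)\to1/A_i$ and $n_i/t\to A_i$, so $\log_2R_k\le t\bigl(\max_iA_iH_2(1/A_i)+o(1)\bigr)$ as $k\to\infty$. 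Also $R_k\to\infty$. Dividing gives \eqref{eq:box-entropy} along the sequence $r=R_k$, which suffices for a $\limsup$.

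The step I expect to need the most care is this limit. One must check that $H_2$ is continuous and increasing on $(0,1/2]$, so that $n_iH_2(t/n_i)=tA_iH_2(1/A_i)+o(t)$. In the boundary case $A_i=2$, $t/n_i$ slightly exceeds $1/2$. There I would use $\sum_{j<t}\binom nj\le2^n$ together with $n_i=2t-1$, which gives $n_i\le tA_iH_2(1/A_i)$ exactly. The remaining work is only to confirm that the labels lie in $Q\oplus P$, and that zero columns and repeated labels are permitted, as they are in Proposition~\ref{prop:code}.
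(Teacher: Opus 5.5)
Your proposal is correct and matches the paper's proof: the same graph code of block parities with labels $(0,2h_i)$ on the parity coordinates, Lemma~\ref{lem:detector} forcing $t\mid\wt(x^{(i)})$, and admissibility forcing the unique zero (you pre-scale the $Q$-labels by $t^{-1}$, whereas the paper cancels the invertible factor $t$ at the end). For the entropy step you prove only the upper bound on $\log_2R_k$, via $\sum_{j\le\alpha n}\binom nj\le2^{nH_2(\alpha)}$ with $A_i=2$ treated separately; this is exactly the direction the $\limsup$ needs and is slightly lighter than the paper's two-sided Stirling estimate.
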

\begin{proof}
Choose $h_1,\ldots,h_s\in P$ forming a zero-sum-free sequence.
This is possible by taking a subsequence of the standard sequence
with $p-1$ copies of each of $v$ independent generators.

Take independent binary blocks $x^{(i)}\in\F_2^{n_i}$.
For each $i$ and every nonempty $U\subseteq[n_i]$ with $|U|<t$,
introduce a parity coordinate
$y_{i,U}=\ell_U(x^{(i)})$. The resulting graph code
\[
 V_k=\{(x,Tx):x\in\F_2^{d_k}\}\subseteq\F_2^{d_k+R_k}
\]
has full-rank parity-check matrix $B=[\,T\mid I_{R_k}\,]$.
Give a systematic coordinate from block $i$ the label $(q_i,0)$
in $Q\oplus P$, and a parity coordinate from block $i$ the label
$(0,2h_i)$. Attach the corresponding column of $B$ to each
label, obtaining a sequence in $Q\oplus P\oplus C_2^{R_k}$.

If a selection has zero binary sum, its parity bits are $y=Tx$.
Its odd component is, by Lemma~\ref{lem:detector},
\[
 \sum_i h_i\bigl(1-b_i(x^{(i)})\bigr),
 \qquad b_i(x^{(i)})=\Id_{t\mid\wt(x^{(i)})}.
\]
Each coefficient is zero or one. Since the sequence $(h_i)$ is
zero-sum-free, vanishing forces every coefficient to be zero.
Thus $\wt(x^{(i)})=tz_i$ with $0\le z_i\le a_i$.
The $Q$ component is $t\sum_i z_iq_i$. Multiplication by the
odd integer $t$ is invertible on $Q$, so
\eqref{eq:box-condition} gives $z_i=0$ for every $i$.
Hence $x=0$ and then $y=0$. The sequence is zero-sum-free.

For a fixed integer $A\ge2$, Stirling's formula gives
\begin{equation}\label{eq:entropy-binomial}
 \log\left(\sum_{j=1}^{t-1}\binom{At-1}{j}\right)
       =At\,H_2(1/A)+o(t).
\end{equation}
For $A>2$, bound the sum between its largest term and $At$
times that term and apply Stirling's formula. For $A=2$, the
sum equals $2^{2t-2}-1$, giving the same estimate.
Since the number of blocks is fixed,
\[
 d_k=t\sum_i A_i+O(1),\qquad
 \log R_k=t\max_i\{A_iH_2(1/A_i)\}+o(t).
\]
The sequence length gives
$\D(Q\oplus P\oplus C_2^{R_k})\ge R_k+d_k+1$.
Subtract $\Ds=R_k+O_{Q,P}(1)$ and divide by $\log R_k$.
This proves \eqref{eq:box-entropy}.
\end{proof}

\begin{definition}[Finite entropy bound]\label{def:box-invariant}
For a positive integer $b$, define
\[
 \mathcal B_b(Q)=\max\left(
 \{0\}\cup
 \left\{
 \frac{\sum_{i=1}^s(a_i+1)}
 {\max_i\{(a_i+1)H_2(1/(a_i+1))\}}:
 \begin{array}{l}
 1\le s\le b,\\[-2pt]
 (q_i,a_i)_{i=1}^s\text{ admissible in }Q
 \end{array}
 \right\}\right).
\]
\end{definition}

This is a finite maximum: every admissible list satisfies
$\sum_i a_i\le\D(Q)-1$, and the labels are chosen in a fixed
finite group. Theorem~\ref{thm:box} gives
\begin{equation}\label{eq:finite-entropy-bound}
 \overline\lambda(Q\oplus C_p^v)\ge\mathcal B_{v(p-1)}(Q).
\end{equation}
This parameter packages a family of lower bounds; it is not asserted
to equal the Davenport coefficient.

\begin{example}[Elementary two-groups]
For $Q=C_2^u$, every admissible list has $a_i=1$ and linearly
independent labels. Thus
$\mathcal B_b(C_2^u)=\min\{u,b\}$.
On the other hand, the weighted odd-primary bound and $v$ independent
simplex constructions give
\[
 E_{C_2^u\oplus C_p^v}(r)=v(p-1)\log r+O_{p,u,v}(1).
\]
For the lower bound, use $p-1$ simplex blocks in each independent
$C_p$ coordinate; their total dimension is $v(p-1)m$, and
$m=\log r+O_{p,v}(1)$. For the upper bound absorb $C_2^u$ into
the binary rank and use $b_P=v(p-1)$ in
Corollary~\ref{cor:weighted}.
In particular \eqref{eq:finite-entropy-bound} need not be sharp.
\end{example}

\subsection{The \texorpdfstring{$C_{24}$}{C24} family and unbounded coefficient growth}
\begin{corollary}[An explicit $C_{24}$ lower family]\label{cor:C24-lower}
For each integer $k\ge1$, put $t=3^k$ and
\[
 n_1=5t-1,\qquad n_2=4t-1,\qquad
 R_k=\sum_{j=1}^{t-1}
       \left(\binom{n_1}{j}+\binom{n_2}{j}\right).
\]
Then
\begin{equation}\label{eq:C24-lower-finite}
 E_{C_{24}}(R_k)\ge9\cdot3^k-25,
\end{equation}
and
\begin{equation}\label{eq:C24-lower-coefficient}
 \overline\lambda(C_{24})\ge\frac{9}{5H_2(1/5)}.
\end{equation}
\end{corollary}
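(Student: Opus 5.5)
This is an application of Theorem~\ref{thm:box} with $Q=C_8$, $P=C_3$ (so $p=3$, $v=1$, and $s\le v(p-1)=2$), using $C_{24}\cong C_8\oplus C_3$. The configuration I would use has two blocks, $(q_1,a_1)=(1,4)$ and $(q_2,a_2)=(2,3)$ in $C_8$. Then $A_1=5$, $A_2=4$, $n_1=5t-1$ and $n_2=4t-1$, which matches the statement. Admissibility requires that $z_1+2z_2\equiv0\pmod 8$ with $0\le z_1\le4$ and $0\le z_2\le3$ forces $z_1=z_2=0$. The sum $z_1+2z_2$ lies in $[0,10]$, so the only other possibility is $z_1+2z_2=8$. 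This would need $z_1$ even, so $z_1\in\{2,4\}$ and $z_2\in\{3,2\}$. Both choices give sum $8$, so this configuration fails.

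I would therefore look for another admissible configuration with $A$-values $(5,4)$. This amounts to a multiset of four copies of $q_1$ and three copies of $q_2$ that is zero-sum-free in $C_8$, of total length $7=\D(C_8)-1$. Zero-sum-free sequences of maximal length $7$ in $C_8$ consist of a single generator repeated $7$ times, so one can take $q_1=q_2=1$ with $a_1=4$ and $a_2=3$. The paper explicitly allows the same label in more than one block. The condition then reads $z_1+z_2\equiv0\pmod8$ with $z_1+z_2\le7$, which forces $z_1=z_2=0$. So the list $(1,4),(1,3)$ is admissible.

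Theorem~\ref{thm:box} then gives a zero-sum-free sequence of length $R_k+d_k$ over $C_8\oplus C_3\oplus C_2^{R_k}$, where $d_k=n_1+n_2=9t-2$ and $R_k$ is exactly the displayed sum. Hence $\D(C_{24}\oplus C_2^{R_k})\ge R_k+9t-1$. For the finite bound \eqref{eq:C24-lower-finite} I would subtract $\Ds(C_{24}\oplus C_2^{R_k})=R_k+24$, valid because $R_k\ge1$ (the invariant factors are $2,\dots,2,24$). This gives $E_{C_{24}}(R_k)\ge 9t-25=9\cdot3^k-25$.

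For the coefficient bound \eqref{eq:C24-lower-coefficient}, formula \eqref{eq:box-entropy} yields $\sum_i A_i/\max_i A_iH_2(1/A_i)=9/\max\{5H_2(1/5),4H_2(1/4)\}$. It remains to check that $5H_2(1/5)>4H_2(1/4)$. Numerically $5H_2(1/5)=\log_2 5+4\log_2(5/4)\approx3.6096$, while $4H_2(1/4)=2+3\log_2(4/3)\approx3.2451$. More conceptually, $A\mapsto AH_2(1/A)$ is increasing, since its derivative is $\log_2(A/(A-1))>0$. The maximum is therefore $5H_2(1/5)$, which gives $9/(5H_2(1/5))$.

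The main obstacle is finding the correct admissible configuration. The naive choice with distinct labels fails, and one has to notice that repeated labels across blocks are permitted. The remaining steps are direct substitutions into Theorem~\ref{thm:box} together with the elementary monotonicity of $AH_2(1/A)$.
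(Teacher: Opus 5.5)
Your proposal is correct and follows the paper's own proof. Both use the admissible configuration consisting of a generator $q$ of $C_8$ in two blocks with $(a_1,a_2)=(4,3)$, apply Theorem~\ref{thm:box} with $P=C_3$, subtract $\Ds(C_{24}\oplus C_2^{R_k})=R_k+24$, and read off the exponent $5H_2(1/5)$. Your monotonicity check on $A\mapsto AH_2(1/A)$ simply spells out the paper's one-line remark that $5H_2(1/5)$ is the larger of the two entropy exponents.
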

\begin{proof}
In $Q=C_8$, let $q$ be a generator and take two blocks with
$(q_1,q_2)=(q,q)$ and $(a_1,a_2)=(4,3)$.
If $0\le z_1\le4$ and $0\le z_2\le3$, then
$0\le z_1+z_2\le7$. Hence the configuration is admissible.
Take $P=C_3$ and $h_1=h_2$ a generator.
Theorem~\ref{thm:box} gives a zero-sum-free sequence of length
$R_k+9t-2$. Since $\Ds(C_{24}\oplus C_2^{R_k})=R_k+24$,
this proves \eqref{eq:C24-lower-finite}.
The larger of the two entropy exponents is $5H_2(1/5)$,
which proves \eqref{eq:C24-lower-coefficient}.
\end{proof}

Here is the sequence without matrix notation. Let $g$ have order
24, and let $e_{i,U}$, with $i=1,2$ and
$\varnothing\ne U\subseteq[n_i]$, $|U|<t$, be independent
elements of order two. Take the terms
\begin{equation}\label{eq:C24-explicit-sequence}
 9g+\sum_{U\ni j}e_{i,U}\quad(1\le j\le n_i),
 \qquad
 8g+e_{i,U}\quad(1\le|U|<t),
 \qquad i=1,2.
\end{equation}
The residues $9$ and $8$ correspond to $(1,0)$ and $(0,2)$
under $C_{24}\cong C_8\oplus C_3$.
If the binary sum vanishes, the second type of term is selected
exactly according to the relevant parity of the first type.
The odd component forces the two block weights to have the form
$w_1=tz_1$ and $w_2=tz_2$, with $0\le z_1\le4$ and $0\le z_2\le3$.
The $C_8$ component gives $t(z_1+z_2)\equiv0\pmod8$.
Since $t=3^k$ is invertible modulo eight, $z_1+z_2\equiv0\pmod8$.
Now $0\le z_1+z_2\le7$ forces $z_1=z_2=0$, and hence both weights vanish.

For orientation, the finite lower estimates are:
\[
\begin{array}{c|r|r|r|r}
 k&d_k&\log_2R_k&9\cdot3^k-25&
 (9\cdot3^k-25)/\log_2R_k\\ \hline
 2&79&27.932222&56&2.004853\\
 3&241&91.980460&218&2.370069\\
 4&727&286.127990&704&2.460437\\
 5&2185&870.104755&2162&2.484758
\end{array}
\]
These entries illustrate the explicit family; its asymptotic
coefficient follows from \eqref{eq:entropy-binomial}.

\begin{corollary}[Growing two-primary exponent]\label{cor:unbounded}
For every odd prime $p$ and integer $a\ge1$,
\begin{equation}\label{eq:unbounded-coefficient}
 \overline\lambda(C_{p2^a})\ge\frac1{H_2(2^{-a})}.
\end{equation}
\end{corollary}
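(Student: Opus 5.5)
We apply Theorem~\ref{thm:box} with $Q=C_{2^a}$, $P=C_p$ (so $v=1$), and a single block, $s=1\le p-1$. The goal is to choose the configuration so that the right side of \eqref{eq:box-entropy} equals $1/H_2(2^{-a})$. Since $C_{p2^a}\cong C_{2^a}\oplus C_p$, the invariant $\overline\lambda(C_{p2^a})$ is exactly $\overline\lambda(Q\oplus P)$, and the theorem then gives the bound directly.

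Let $q$ be a generator of $C_{2^a}$ and take the one-block configuration $(q_1,a_1)=(q,2^a-1)$. This is admissible in the sense of \eqref{eq:box-condition}. Indeed, if $0\le z_1\le 2^a-1$ and $z_1q=0$, then $2^a\mid z_1$, which forces $z_1=0$. Equivalently, $2^a-1$ copies of a generator form the standard zero-sum-free sequence in a cyclic group.

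Then $A_1=2^a$, and \eqref{eq:box-entropy} gives
\[
 \overline\lambda(C_{p2^a})\ge\frac{A_1}{A_1H_2(1/A_1)}=\frac1{H_2(2^{-a})}.
\]
The hypothesis $A\ge2$ in Lemma~\ref{lem:detector} holds because $a\ge1$. In the edge case $a=1$, we have $A_1=2$ and $H_2(1/2)=1$, so the bound is $1$. The entropy estimate \eqref{eq:entropy-binomial} covers this case through the exact evaluation $2^{2t-2}-1$.

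For the asymptotic form stated in Theorem~\ref{thm:main-coefficients}, expand the entropy at $u=2^{-a}$:
\[
 H_2(u)=u\log_2(1/u)+(1-u)\log_2\frac1{1-u}=u\,(a+\log_2e+o(1)).
\]
This gives $1/H_2(2^{-a})\sim 2^a/(a+\log_2e)$.

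There is no real obstacle, since everything reduces to Theorem~\ref{thm:box}. The only point requiring care is checking that the single-block hypotheses $s\le v(p-1)$ and admissibility hold. As a sanity check, for $C_{24}$ (where $a=3$) this yields only $1/H_2(1/8)\approx1.84$. The two-block configuration of Corollary~\ref{cor:C24-lower} does better there, which is consistent with the paper's claims.
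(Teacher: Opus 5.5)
Your proposal is correct and is the same argument as the paper's proof: one block $(q_1,a_1)=(q,2^a-1)$ with $q$ a generator of $C_{2^a}$, fed into Theorem~\ref{thm:box} with $P=C_p$, which gives $A_1/(A_1H_2(1/A_1))=1/H_2(2^{-a})$. Your checks of $s=1\le p-1$, of admissibility, and of the asymptotic expansion of $H_2(2^{-a})$ are all accurate.
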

\begin{proof}
Take one block in $Q=C_{2^a}$ with a generator $q_1$ and
$a_1=2^a-1$. This is admissible, and
Theorem~\ref{thm:box} with $P=C_p$ gives
\eqref{eq:unbounded-coefficient}.
The expansion
$H_2(\varepsilon)=\varepsilon\log_2(1/\varepsilon)
+\varepsilon\log_2e+O(\varepsilon^2)$ gives
\eqref{eq:main-unbounded}.
\end{proof}

\begin{proof}[Proof of Theorem~\ref{thm:main-coefficients}]
Combine Corollary~\ref{cor:C24-lower} with
Theorem~\ref{thm:C24-upper} for \eqref{eq:main-C24}.
Corollary~\ref{cor:unbounded} proves
\eqref{eq:main-unbounded}.
\end{proof}

Together with Corollary~\ref{cor:elementary}, the $C_{24}$ example
rules out a coefficient formula depending only on the odd primary
part. In particular no constant depending on $C_8$ and $C_3$ can
make $E_{C_{24}}(r)=2\log_2r+O(1)$.

\section{Limitations and further questions}\label{sec:questions}
For $C_3\oplus C_2^r$, Theorem~\ref{thm:windows} gives numerical
equalities on infinitely many explicit rank intervals.
Theorem~\ref{thm:large-rank} gives a structural identity at every
sufficiently large rank, with two Fourier thresholds still to be
evaluated. These are different levels of determination.
For general odd $p$-groups, Theorem~\ref{thm:dichotomy} identifies
the logarithmic coefficient $b=\D(H)-1$, but an exact bounded remainder must
retain the discrete dependence on $r$. Indeed, if
$\D(H\oplus C_2^r)-\Ds(H\oplus C_2^r)-b\log_2r$ converged,
its limit along $r=2^j$ would be an integer, whereas along
$r=\lfloor2^{j+1/(2b)}\rfloor$ it would be a half-integer.

\subsection{A necessary loss in degree--sparsity intersection bounds}
\label{subsec:obstruction}
Theorem~\ref{thm:main-intersection} has a large dependence on $D$.
Some dependence on the binary degree is unavoidable even with a
fixed odd output field.

\begin{proposition}[A cubic obstruction]\label{prop:obstruction}
There is no constant $C$ such that every pair
$a:\F_2^d\to\F_2$, $b:\F_2^d\to\F_3$ with
$\deg a\le3$ and $\{a=1,b\ne0\}=\{0\}$ satisfies
$d\le\log_2\Sp(b)+C$. This remains false if $b$ is restricted
to the values zero and one.
\end{proposition}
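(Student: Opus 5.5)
The plan is to build explicit counterexamples from the truncated Fourier detector of Lemma~\ref{lem:detector}, paired with a cubic binary detector of weight divisibility by four. Fix $t=3^k$ and $d=n=4t-1$. Let $b:\F_2^n\to\F_3$ be $b(x)=\Id_{t\mid\wt(x)}$, which takes only the values zero and one. Let $a:\F_2^n\to\F_2$ be $a(x)=\Id_{4\mid\wt(x)}$. Both functions equal one at the origin.

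First I check that $\deg a\le3$. By Lucas' theorem, the elementary symmetric polynomials satisfy $e_1(x)\equiv\binom{\wt(x)}1$ and $e_2(x)\equiv\binom{\wt(x)}2\pmod2$. These give the two lowest binary digits of $\wt(x)$. Hence
\[
 a=(1+e_1)(1+e_2)
\]
as a multilinear polynomial over $\F_2$. Since $e_1$ is linear and $e_2$ is quadratic, the product has degree at most three.

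Next I identify the common set. If $a(x)=1$ and $b(x)\ne0$, then $4\mid\wt(x)$ and $t\mid\wt(x)$. Since $t$ is odd, this gives $4t\mid\wt(x)$. But $0\le\wt(x)\le4t-1$, so $\wt(x)=0$. Thus $\{a=1,b\ne0\}=\{0\}$, and $b$ is $\{0,1\}$-valued as required.

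The remaining step is to compare $d$ with $\log_2\Sp(b)$. Lemma~\ref{lem:detector} with $p=3$ and $A=4$ gives exactly
\[
 \Sp(b)=\sum_{j=0}^{t-1}\binom{4t-1}{j}.
\]
By the entropy estimate \eqref{eq:entropy-binomial}, $\log_2\Sp(b)=4t\,H_2(1/4)+o(t)$; the $j=0$ term does not affect this. Therefore
\[
 d-\log_2\Sp(b)\ge4t\bigl(1-H_2(1/4)\bigr)-o(t).
\]
Since $H_2(1/4)\approx0.811<1$, this tends to infinity as $k\to\infty$, so no constant $C$ can work.

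I expect the only delicate point to be the choice of the binary modulus. Using parity alone, $a=1+e_1$, forces $n=2t-1$. Then $\Sp(b)=2^{2t-2}$, and the difference stays bounded. Modulus four is the smallest choice that makes the block ratio $A$ large enough for $A(1-H_2(1/A))$ to be positive, while keeping the degree at three. If a cruder bound on the binomial sum is preferred over \eqref{eq:entropy-binomial}, I would bound it by $t\binom{4t-1}{t-1}$ and apply Stirling's formula directly.
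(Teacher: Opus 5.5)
Your proposal is correct and matches the paper's proof: the same choices $t=3^k$, $d=4t-1$, $b=\Id_{t\mid\wt(x)}$ and $a=(1+e_1)(1+e_2)$, the coprimality of $4$ and $t$ forcing $4t\mid\wt(x)$ and hence $x=0$, and Lemma~\ref{lem:detector} with the entropy estimate to get $\log_2\Sp(b)/d\to H_2(1/4)<1$. Your Lucas-theorem justification of $\deg a\le3$ and your remark on why parity alone gives only a bounded gap add detail but do not change the argument.
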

\begin{proof}
Take $t=3^k$, $d=4t-1$, and let
$b(x)=\Id_{t\mid\wt(x)}$. Define the binary polynomial
$a(x)=(1+e_1(x))(1+e_2(x))$.
The functions $e_1,e_2$ give the lowest two binary digits of
$\wt(x)$, so $a(x)=1$ exactly when $4\mid\wt(x)$, and
$\deg a\le3$.
The common condition implies $4t\mid\wt(x)$ and therefore
$x=0$. Lemma~\ref{lem:detector} gives
\[
 \Sp(b)=\sum_{j=0}^{t-1}\binom dj,
 \qquad \frac{\log_2\Sp(b)}d\longrightarrow H_2(1/4)<1.
\]
Thus $d-\log_2\Sp(b)\to\infty$.
\end{proof}

More generally, taking the indicator of weight divisible by $2^a$
gives binary degree at most $2^a-1$ and sparsity exponent
$H_2(2^{-a})$. Thus any universal singleton estimate
\[
 d\le C_D\log_2\Sp(b)+O_D(1)
\]
must satisfy
\[
 C_{2^a-1}\ge\frac1{H_2(2^{-a})}\qquad(a\ge2).
\]
The upper coefficient $2^D$ and this necessary lower dependence
leave a substantial gap. Its study is independent of whether a
specific group-valued fiber has extra structure.

\subsection{Coefficient limits and optimal constructions}
For fixed $H$ with $t(H)=1$, the threshold theorem places
$\underline\lambda(H)$ and $\overline\lambda(H)$ between positive
finite constants. It does not show that they are equal.
Theorem~\ref{thm:box} supplies families along the prime-power
scales $t=p^k$; no assertion of a single coefficient limit is
needed for these lower bounds.

Three questions arise naturally. First, does the limit
$E_H(r)/\log_2r$ exist for every such $H$? Second, how closely
does the finite configuration bound
$\mathcal B_{v(p-1)}(Q)$ approximate the upper limiting coefficient
for $H=Q\oplus C_p^v$? Third, can the degree--sparsity intersection
bound incorporate more of the two-primary algebra and the odd
weight detector, leading to matching upper bounds?

The first concrete case is $C_{24}$, where the present results give
\[
 2\le\underline\lambda(C_{24})\le\overline\lambda(C_{24})\le32,
 \qquad
 \overline\lambda(C_{24})\ge2.4933231062\ldots.
\]
The lower bound two for the liminf comes from the simplex
construction. The larger limsup bound proves that the formula
with coefficient two is false, without deciding whether the
normalized excess converges.

\subsection{More than one odd prime}
When $t(H)\ge2$, the composite-modulus construction proves
superlogarithmic growth. Corollary~\ref{cor:cyclic-sublinear} gives
a sublinear upper bound for cyclic odd $H$, but matching bounds
remain open. The fixed-group program used in
Proposition~\ref{prop:wreath} then has a normal base involving
several odd primes, so the specific class
\eqref{eq:IKKW-class} no longer applies through that decomposition.
The elementary intersection theorem also controls only one
odd-characteristic Fourier condition at a time. A quantitative
principle for several odd characteristics would need to account
for the composite-modulus singleton constructions.

\appendix
\section{A transfer to nonabelian Davenport constants}\label{sec:nonabelian}
For a finite group $G$, let $\dd(G)$ be the maximum length of a
product-one-free sequence, where selected terms may be reordered.
Let $\D(G)$ be the maximum length of an atom in the monoid of
product-one sequences. In the abelian case $\D(G)=\dd(G)+1$.

\begin{proposition}\label{prop:nonabelian}
Let $K$ be any fixed finite group whose prime divisors lie in
$\{2,p\}$, with $p$ odd. Then
\[
 \dd(K\times C_2^r)=r+O_K(\log(r+2)),\qquad
 \D(K\times C_2^r)=r+O_K(\log(r+2)).
\]
If $p\mid|K|$, both corrections to $r$ are
$\Theta_K(\log r)$. No nilpotence assumption on $K$ is needed.
\end{proposition}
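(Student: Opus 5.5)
We argue in four steps: a lower bound, an upper bound on product-one-free sequences, a comparison of $\D$ with $\dd$, and the lower bound for $\D$.

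\textbf{Lower bound.} Assume $p\mid|K|$. By Cauchy's theorem $K$ contains a subgroup $C_p$, so $C_p\times C_2^r\le K\times C_2^r$. This subgroup is abelian. Its product-one-free sequences are exactly the zero-sum-free sequences, and they remain product-one-free in the larger group. Corollary~\ref{lem:simplex} therefore gives
\[
 \dd(K\times C_2^r)\ge\dd(C_p\oplus C_2^r)=\D(C_p\oplus C_2^r)-1\ge r+(p-1)\log r-O_p(1).
\]
If $K$ is a $2$-group, we use only $\dd\ge r+\dd(K)$. This is seen by concatenating a maximal product-one-free sequence in $K$ with $r$ independent binary generators. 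Any product-one subsequence projects to a product-one subsequence in $C_2^r$, which forces it to avoid the generators.

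\textbf{Upper bound on $\dd$.} Take a product-one-free sequence $((k_i,v_i))_{i=1}^L$ in $K\times C_2^r$, and fix the given order on the indices. Let $V\le\F_2^L$ be the selection kernel of the $v_i$; it has dimension $d\ge L-r$. For $x\in V$, define the ordered product
\[
 F(x)=\prod_i k_i^{x_i},
\]
taken in increasing index order. Each factor depends on one bit, so this is the local word \eqref{eq:local-word} with $\beta=1$ and $T=L$. Product-one freeness, even with the order fixed, forces $F^{-1}(1)\cap V=\{0\}$, a singleton fiber.

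Theorem~\ref{thm:local} needs $K=Q\times P$, which holds only if $K$ is nilpotent. This is the main obstacle. To remove it, I will embed $K$ into a group to which the fiber bound applies. The plan is to use Proposition~\ref{prop:wreath}-style reasoning. By Burnside's $p^aq^b$ theorem, $K$ is solvable. Take a Sylow $p$-subgroup $P$. Then $K$ has a normal series with factors that are $2$-groups and $p$-groups.

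The cleaner route is to bypass the structure of $K$ altogether. Let $K$ act on the $\F_p$- or $\F_2$-group algebra through its regular representation, and encode the condition $F(x)=1$ through a composition series. More concretely, I will try the following induction. Choose a normal subgroup $N\trianglelefteq K$ with $K/N$ of prime-power order and $N$ again a $\{2,p\}$-group. Impose $F(x)\in N$ first. This is a fiber condition for a product in the prime-power group $K/N$. Its augmentation expansion gives either a low-degree binary polynomial or a sparse $\F_p$-function, according to the prime.

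Then I pass to a subcode $U$ on which $F(x)\in N$ holds identically. The mod-four extraction in Lemma~\ref{lem:mod4} is the model for this step. It is achieved by iterated extension, using the binary-weight lemma or the uncertainty lemma, at the cost of a constant factor in dimension plus $O(\log L)$. On $U$, the product $F$ can be rewritten as a product of $N$-valued local factors of bounded arity. We write $F(x)=\prod_i c_i(x)$, where the $c_i$ are $k_i$ conjugated by partial products, and each $c_i$ depends on the prefix. Because the prefix is not local, this is the genuinely delicate point.

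If the rewriting fails, the fallback is a different argument. Embed $K$ into $Q\wr P$ or into $P\wr Q$, and use the wreath-product program simulation. Then apply \cite[Theorem~17]{IKKW} to a group of the form \eqref{eq:IKKW-class}, with Proposition~9 of that paper giving density $L^{-O_K(1)}$ for nonempty fibers. The Remark after Proposition~\ref{prop:D-upper} then yields $d=O_K(\log L)$, hence $L\le r+O_K(\log r)$. The point to verify is that every solvable $\{2,p\}$-group embeds in some $N\times(R\rtimes A)$ covered by \cite{IKKW}, or that the program can instead be run over a fixed group of that form. Which of these holds is exactly what I expect to consume the effort.

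\textbf{Comparing $\D$ with $\dd$.} The upper bound for the large constant should follow from $\D(G)\le\dd(G)+1$ applied through subsequences of an atom. An atom minus one term is product-one free in every ordering? Not quite; it is only free of product-one proper subsequences in any ordering. So instead I will repeat the fiber argument with fixed ordering. An atom of length $L$ yields a code of dimension $\ge L-r$ whose only selections $x$ with $F(x)=1$ for every ordering are $0$ and $\mathbf 1$. Fixing one ordering shows that the product-one fiber meets $V$ in a subset of $\{0,\mathbf 1\}$. A fiber of size at most two still contradicts the polynomial density bound, so the same $O_K(\log r)$ estimate follows.

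\textbf{Lower bound for $\D$.} The lower bound transfers through $\D(G)\ge\dd(G)+1$.
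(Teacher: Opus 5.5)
\textbf{There is a genuine gap: the upper bound is never proved for non-nilpotent $K$.} Your lower bounds are correct. But you never establish the upper bound on $\dd(K\times C_2^r)$, nor the one on $\D$, when $K$ is not nilpotent, and that is the case the statement singles out. For nilpotent $K=Q\times P$ your argument does work: Theorem~\ref{thm:local} with $\beta=1$ and $T=L$ handles both the singleton fiber and the fiber of size at most two for atoms.

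\textbf{Why your routes fail.} Fixing an order reduces the problem to an isolated identity fiber of an ordered product over $K$, which is essentially a program over $K$. Your induction breaks exactly where you say it does: on a subcode with $F(x)\in N$, the rewritten factors are conjugates by prefix products, so their arity is unbounded. Your fallback is false in general. Take $p=3$ and $K=S_4$.
\begin{itemize}
\item Any subgroup $G\le N\times(R\rtimes A)$ satisfies $G'\le N'\times R$, because $(R\rtimes A)/R\cong A$ is abelian. So $G'$ is nilpotent, whereas $S_4'=A_4$ is not. Hence $S_4$ does not embed in any group of the form \eqref{eq:IKKW-class}.
\item $S_4$ has Fitting length three, so it embeds in neither $Q\wr P$ nor $P\wr Q$, each of which has Fitting length at most two.
\end{itemize}
For such $K$ you would need a fixed-order fiber bound over a solvable group outside the class \eqref{eq:IKKW-class}. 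Nothing in the paper supplies one, and proving one would be a substantial question about programs over solvable groups in its own right. Your size-two-fiber argument for $\D$ depends on the same missing bound.

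\textbf{The missing idea: reordering makes the problem abelian.} Because $\dd$ allows reordering, the nonabelian structure of $K$ becomes irrelevant. Put $B_K=\bigoplus_{g\in K\setminus\{1\}}C_{\ord(g)}$. Send a term $(g,v)$ to $(e_g,v)$, where $e_g$ generates the summand indexed by $g$, and send identity labels to $(0,v)$.
\begin{itemize}
\item A nonempty zero-sum subsequence of the image uses each $g\ne1$ a multiple of $\ord(g)$ times and has zero binary sum.
\item Grouping its terms into blocks $g^{\ord(g)}$ shows it is product-one in $K\times C_2^r$.
\item Hence $\dd(K\times C_2^r)+1\le\D(B_K\oplus C_2^r)$.
\end{itemize}
$B_K$ is a fixed abelian group whose order has prime divisors only in $\{2,p\}$. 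So Proposition~\ref{prop:D-upper} gives $r+O_K(\log(r+2))$ with no solvability or nilpotence input.

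\textbf{The large constant.} The paper does not use fibers here. It uses Grynkiewicz's inequality $\D(G)\le\dd(G)+2|G'|-1$ together with $(K\times C_2^r)'=K'$. This bounds the gap between $\D$ and $\dd+1$ independently of $r$.
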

\begin{proof}
Define the fixed abelian certificate group
\[
 B_K=\bigoplus_{g\in K\setminus\{1\}}C_{\ord(g)}.
\]
Replace the $K$-label $g$ of a term by the generator of its own
cyclic summand of $B_K$; give the identity label certificate zero.
A zero-sum subsequence of the certificates uses each nonidentity
$g$ a multiple of $\ord(g)$ times. Its $K$ terms can therefore
be grouped into identity blocks. If its binary sum vanishes,
it is product-one in $K\times C_2^r$.
Consequently
\[
 \dd(K\times C_2^r)+1\le\D(B_K\oplus C_2^r).
\]
Proposition~\ref{prop:D-upper} bounds the right-hand side by
$r+O_K(\log(r+2))$. The trivial lower bound for the small
constant is $r$, and the subgroup $C_p\le K$, when present,
gives the logarithmic lower bound by Corollary~\ref{lem:simplex}.

For the large constant, the general inequality
\[
 \dd(G)+1\le\D(G)\le\dd(G)+2|G'|-1
\]
uses the upper bound of~\cite[Theorem 3.1]{Grynkiewicz}.
Since $(K\times C_2^r)'=K'$, the gap is bounded independently
of $r$, proving the remaining assertions.
\end{proof}

\section{An exact structure formula for all sufficiently large ranks}\label{sec:large-rank}

The exact transfer can be localized to two adjacent dimensions. The
construction needed for this localization is uniform in all sufficiently
large even dimensions. The result below retains two unevaluated
extremal thresholds; it is not a closed evaluation for all ranks.

\begin{theorem}[Two-threshold localization]
\label{thm:large-rank}
Let $r\ge15\cdot2^{63}-133$ be an integer, and put
\[
 m=\floor{\log_2\!\left(\frac8{15}(r+2\log_2r)\right)}.
\]
Then
\begin{equation}\label{eq:large-rank}
\begin{aligned}
 \D(C_3\oplus C_2^r)
 ={}&r+2m+1\\
 &+\ind_{\{\nu(2m+1)\le r+2m+2\}}\\
 &+\ind_{\{\nu(2m+2)\le r+2m+3\}}.
\end{aligned}
\end{equation}
\end{theorem}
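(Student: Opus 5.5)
Everything runs through the exact formula \eqref{eq:C3transfer}: $\D(C_3\oplus C_2^r)=r+1+d^*$, where $d^*$ is the largest $d$ with $\nu(d)\le r+1+d$. It therefore suffices to prove three facts for the stated $r$ and $m$: (i) $\nu(d)\le r+1+d$ for every $d\le 2m$; (ii) $\nu(2m+3)>r+2m+4$ and, more generally, $\nu(d)>r+1+d$ for all $d\ge 2m+3$; (iii) the implication $\nu(2m+2)\le r+2m+3\Rightarrow\nu(2m+1)\le r+2m+2$. Granting these, $d^*\in\{2m,2m+1,2m+2\}$, and $d^*-2m$ equals the sum of the two indicators in \eqref{eq:large-rank}. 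Fact (iii) rules out the mixed case in which the second indicator is one and the first is zero. Fact (i) also requires that feasibility be downward closed up to $2m$. This should follow from monotonicity: restricting an admissible function on $\F_2^{d+1}$ to a hyperplane gives an admissible function on $\F_2^{d}$ with no larger support. Hence $\nu$ is nondecreasing and $\nu(d)-d$ is not obviously monotone, but $\nu(d)\le\nu(d+1)$ gives $\nu(d)\le r+1+d$ whenever $\nu(d+1)\le r+1+(d+1)$ fails only by one. Concretely, (iii) follows from $\nu(2m+1)\le\nu(2m+2)$, since $r+2m+2<r+2m+3$ leaves a gap of one to absorb; I would check that this is exactly right or supply a one-step extension (add a coordinate) to cover it.

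For (i), I use the construction in the proof of Proposition~\ref{prop:refined}: $f(x,y)=(1-\delta_0(x))+(1-\delta_0(y))$ on $\F_2^m\oplus\F_2^m$ has full support $B_{2m}\le 2^{m+1}-1$. By monotonicity, $\nu(d)\le B_{2m}$ for all $d\le 2m$, so it suffices that $B_{2m}\le r+1$. The choice $m=\lfloor\log_2(\tfrac8{15}(r+2\log_2 r))\rfloor$ gives $2^m\le\tfrac8{15}(r+2\log_2 r)$. This yields only $2^{m+1}\le\tfrac{16}{15}(r+2\log_2 r)$, which is slightly too big, so the simple two-block function is insufficient. This is the ``construction uniform in all sufficiently large even dimensions'' referred to before the theorem. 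I would build an improved admissible function on $\F_2^{2m}$ with support at most about $\tfrac{15}{8}2^m-2m+O(1)$. The idea is to replace one simplex-type block by a combination of smaller blocks, e.g.\ $1-\delta_0$ on $\F_2^{m-1}$ tensored suitably, or sums $g(x)+g'(y)$ with shared characters, saving $2^m/8$ frequencies. I would then verify $\nu(2m)\le r+2m+1$ using the lower endpoint of $m$. The $2\log_2 r$ term and the large threshold $15\cdot2^{63}-133$ should absorb the lower-order errors.

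For (ii), I apply Lemma~\ref{lem:signed-pairs}: \eqref{eq:pair-even} gives $\nu(d)\gtrsim\sqrt2\,2^{d/2}$. At $d=2m+3$ this is $\sqrt2\cdot 2^{m+3/2}=4\cdot 2^m$, which exceeds $r+2m+4$ because $2^{m+1}>\tfrac8{15}(r+2\log_2r)$, i.e.\ $4\cdot2^m>\tfrac{16}{15}(r+2\log_2 r)>r+2m+4$ for large $r$. For larger $d$ the bound grows like $2^{d/2}$, much faster than $d$, so it holds throughout. The main obstacle is the improved even-dimensional construction in (i): matching the constant $\tfrac{15}{8}$ is delicate, and I expect most of the work, and the source of the huge rank threshold, to lie there.
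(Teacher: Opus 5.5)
Your outline (reduce to \eqref{eq:C3transfer}, make $d=2m$ feasible, rule out $d\ge2m+3$, link $2m+1$ to $2m+2$) matches the paper's, but two steps are not established.

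\textbf{Step (iii) fails as argued.} From $\nu(2m+1)\le\nu(2m+2)\le r+2m+3$ you get only $\nu(2m+1)\le r+2m+3$, one more than the required $r+2m+2$. The slack runs the wrong way. Adding a coordinate would bound $\nu(d+1)$ from above in terms of $\nu(d)$, which is also the wrong direction. This step is essential: if $\nu(2m+1)=\nu(2m+2)=r+2m+3$ were possible, the true value would be $r+2m+3$ while \eqref{eq:large-rank} gives $r+2m+2$.

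The missing idea is \emph{strict} monotonicity, $\nu(d-1)\le\nu(d)-1$. Let $f$ attain $\nu(d)$. Then $f^2=1-\delta_0$ has every nonconstant Fourier coefficient equal to $-2^{-d}\ne0$; this is the fact $m_t\ge1$ in the proof of Lemma~\ref{lem:signed-pairs}. So for each $h\ne0$ the support of $\widehat f$ contains a pair $u,u+h$. Restricting $f$ to $h^\perp\cong\F_2^{d-1}$ keeps the zero set $\{0\}$ and merges these two frequencies. Hence $\nu(d)-d-1$ is nondecreasing, and feasibility in \eqref{eq:C3transfer} is downward closed.

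Your (i) for $d<2m$ was never needed, since $d^*$ is a maximum. Only $\nu(2m)\le r+2m+1$ matters, not the stronger $\nu(2m)\le r+1$ you aim for. Your (ii) is correct and a bit simpler than the paper's route: the even pair bound already forces $\nu(2m+3)\ge2^{m+2}>r+2m+4$, and it handles larger $d$ directly without Corollary~\ref{cor:odd-support}.

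\textbf{Feasibility of $d=2m$ is left as an idea, and it is the core of the result.} Put $A_m=\frac{15}{8}2^m-2m-1$. At $r=A_m$ the requirement is exactly $\nu(2m)\le15\cdot2^{m-3}$, with no room for any additive loss.

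The paper proves this separately as Theorem~\ref{thm:even-construction}, using finite-field lines rather than modified simplex blocks. Lemma~\ref{lem:spread-extension} uses the $q+1$ lines of $\F_q^2$, $q=2^j$. It turns a support $S$ with $U_S^{-1}(0)=\{0\}$, partitioned into $q+1$ parts with nowhere-vanishing character sums, into one of size $q|S|$ in dimension $\dim V+2j$. The paper starts from an explicit $30$-point set in $\F_2^8$ partitioned into nine such parts. It then refines the partitions so that $j$ can grow by $2$ or $4$ at each step. This reaches every half-dimension $M\ge66$ with support $30\cdot2^{M-4}$.

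That range is the source of the threshold $15\cdot2^{63}-133=A_{66}$; the threshold is not there to absorb lower-order errors. Likewise, the $2\log_2r$ in the definition of $m$ is calibrated exactly. The floor gives only $\frac{15}{8}2^m\le r+2\log_2r$, and since $2\log_2r>2m+1$ this does not by itself yield $\frac{15}{8}2^m\le r+2m+1$. You need integrality of $r$ together with $A_m-1+2\log_2(A_m-1)<\frac{15}{8}2^m$ to conclude $r\ge A_m$.
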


The indicators retain two unevaluated extremal thresholds. Their sum
is zero, one, or two, and each determines the exact feasible increment.
The sufficient cutoff is explicit and is not optimized.
The proof uses a construction valid in every sufficiently large even
dimension.

\begin{theorem}[Uniform even-dimensional construction]\label{thm:even-construction}
For every integer $M\ge66$, there is a function
$f:\F_2^{2M}\to\F_3$ whose zero set is $\{0\}$ and whose nonzero Fourier
coefficients are all equal to one, with
\[
 |\supp\widehat f|=15\cdot2^{M-3}.
\]
In particular, $\nu(2M)\le15\cdot2^{M-3}$.
\end{theorem}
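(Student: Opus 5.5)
The plan is to reduce Theorem~\ref{thm:even-construction} to a statement about a binary linear code, and then build that code explicitly. If $f=\sum_{u\in S}\chi_u$ with all coefficients equal to one, then for $x\in\F_2^{2M}$
\[
 f(x)=|S|-2\,w_S(x)\equiv |S|+w_S(x)\pmod 3,\qquad w_S(x)=|\{u\in S:\langle u,x\rangle=1\}|.
\]
Here $w_S(x)$ is the weight of the codeword indexed by $x$ in the code whose generator matrix has the columns $S$. The requirement that the zero set of $f$ be exactly $\{0\}$ then splits into two conditions:
\begin{itemize}
\item $3\mid |S|$, which holds for $|S|=15\cdot2^{M-3}$;
\item $w_S(x)\not\equiv0\pmod 3$ for every $x\ne0$.
\end{itemize}
The second condition automatically forces $S$ to span $\F_2^{2M}$. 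The elements of $S$ must be distinct, since a repeated direction would produce the coefficient $2=-1$. So the task is to find a set $S\subseteq\F_2^{2M}$ of size $15\cdot2^{M-3}$ whose code has no nonzero weight divisible by three. For comparison, the two-simplex example in Proposition~\ref{prop:refined} has size about $16\cdot2^{M-3}$, so we must save one sixteenth of it. The pair bound \eqref{eq:pair-even} only forbids sizes below roughly $\sqrt2\,2^M$, so there is room.

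I would build $S$ by blowing up a small base configuration. Write $x=(y,z)$ with $y,z\in\F_2^M$, and split $y=(y',y'')$ and $z=(z',z'')$ with $y',z'\in\F_2^{3}$ and $y'',z''\in\F_2^{M-3}$. I would choose a base set $S_0\subseteq\F_2^6$, treated as the $(y',z')$ coordinates, together with, for each base point, a coset- or simplex-type family of size $2^{M-3}$ in the $(y'',z'')$ coordinates. Arranged this way, the total size is $|S_0|\cdot2^{M-3}=15\cdot2^{M-3}$. The key feature of simplex and first-order Reed--Muller pieces is that, for a fixed $x$, each piece contributes a count lying in $\{0,2^{M-4},2^{M-3}\}$, up to a bounded correction coming from omitting the zero vector. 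Hence $w_S(x)$ is a sum, weighted by a small pattern determined by $(y',z')$ and by which of $y'',z''$ vanish, of the terms $0$, $2^{M-4}$ and $2^{M-3}$, plus bounded terms. Modulo three, $2^{M-3}\equiv(-1)^{M-3}$, so every residue depends only on $M\bmod2$ and on a finite pattern. The verification therefore becomes a finite check, split by the parity of $M$.

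The steps, in order, are as follows.
\begin{enumerate}
\item Derive the reduction to the weight condition above.
\item Fix the lifting scheme and compute $w_S(x)$ as a function of $(y',z')$, of the indicators $y''\ne0$ and $z''\ne0$, and of whether $z''$ lies in a prescribed relation to $y''$ (for instance $z''=y''$), in the case where mixed pieces $(u,\phi(u))$ are used.
\item Choose $S_0$, of size $15$ or an equivalent weighted pattern, so that all resulting residues are nonzero for both parities of $M$.
\end{enumerate}

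The main obstacle is the mixed case, in which $y''$ and $z''$ are both nonzero and interact through the diagonal-type pieces. There the counts are not uniform, and a naive blow-up produces residue zero for some patterns. To handle this I would first try adding a few low-dimensional correction pieces whose sizes are independent of $M$. This is presumably the source of a large threshold such as $M\ge66$: corrections of bounded size have to be absorbed by subtracting a bounded set from a large simplex piece, and a sufficiently large ambient dimension is needed to carry this out while keeping $S$ a genuine set. With the correction chosen, I would finish by checking every case for both parities of $M$, and then read off $\nu(2M)\le15\cdot2^{M-3}$ from the definition of $\nu$.
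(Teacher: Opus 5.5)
\textbf{There is a genuine gap: the proposal never constructs anything, and the mechanism it sketches fails at the generic point.}

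Your reduction is correct. For $f=\sum_{u\in S}\chi_u$ one has $f(x)\equiv|S|+w_S(x)\pmod 3$, so you need a set $S\subseteq\F_2^{2M}$ of size $15\cdot2^{M-3}$ whose code has no nonzero weight divisible by three. Everything after that, however, is a search plan rather than a proof. The base $S_0$, the blow-up families, the mixed pieces and the corrections are never specified, and no case is checked. For an existence statement like this, the construction is the whole content.

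The failure is not in a corner case. If every piece is balanced at $x$ (exactly half its points $u$ have $\langle u,x\rangle=1$), then $w_S(x)=|S|/2=15\cdot2^{M-4}\equiv0\pmod 3$. A coset-type piece $\{s\}\times(a_s+W_s)$, with $\dim W_s=M-3$ in the $2(M-3)$-dimensional space of $x''=(y'',z'')$, is unbalanced only for $x''\in W_s^\perp$. That is a set of $2^{M-3}$ points out of $4^{M-3}$. Take any bounded number $N$ of such pieces. Once $2^{M-3}>N$, some $x''\ne0$ lies outside all the annihilators, and then $f(x',x'')=0$ for every $x'$. Bounded correction pieces, including the points omitted from simplex pieces, only move the difficulty. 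They alone would have to make $f$ nonzero at all these generic points while staying compatible with every special pattern, and that is exactly what your proposal leaves open. Your guess about the source of the threshold $M\ge66$ is also off; see below.

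\textbf{The missing idea} is a family of blow-up directions whose number grows with $M$ and whose annihilators \emph{partition} the nonzero dual vectors. The paper takes the $q+1$ one-dimensional $\F_q$-subspaces $L_0,\dots,L_q$ of $\F_q^2$, with $q=2^j$, viewed as binary $j$-spaces. Every nonzero $y$ lies in exactly one $L_i^\perp$. By Lemma~\ref{lem:spread-extension}, suppose $U_S^{-1}(0)=\{0\}$ and $S$ is partitioned into $q+1$ parts $T_i$, each with $U_{T_i}$ nowhere zero. Then $\bigsqcup_i T_i\times L_i$ works in dimension $\dim V+2j$, with size $q|S|$.

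A single step from a bounded base cannot reach large $M$, because it needs $q+1$ nonempty parts. So the paper iterates:
\begin{enumerate}
\item It starts from the $30$-point set in $\F_2^8$ obtained by the $q=2$ extension of the $15$-point set $X\cup Y\subseteq\F_2^6$.
\item It partitions this set into nine nonvanishing parts of sizes $10,7,7,1,\dots,1$.
\item It shows that after an extension with odd $j\ge3$, the new set can be repartitioned into $4q+1$ or $16q+1$ nonvanishing parts, still containing parts of sizes $7,7,10$.
\end{enumerate}
Hence $j$ may advance by $2$ or $4$ at each step, and the size remains $15\cdot2^{M-3}$. The threshold $M\ge66$ is simply where the reachable half-dimensions $k^2+2k+4\le M\le2k^2+k+4$, $M\equiv k\pmod2$, begin to cover every integer. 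It has nothing to do with absorbing bounded corrections.
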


\subsection{A construction from finite-field lines}
For a finite subset $T$ of a binary vector space, put
\[
 U_T(x)=\sum_{t\in T}(-1)^{\langle t,x\rangle}\quad\hbox{in }\F_3.
\]
We will use finite sets $T$ satisfying
\begin{equation}\label{eq:nonvanishing}
 U_T(x)\ne0\qquad\text{for every character }x.
\end{equation}
Singletons satisfy \eqref{eq:nonvanishing}, as do translates of
$W\setminus\{0\}$ when $W$ has odd dimension. Indeed, the latter
character sums, up to sign, are $2^{\dim W}-1$ and $-1$, both nonzero
modulo three. Cartesian products and translations preserve
\eqref{eq:nonvanishing}.

\begin{lemma}[Extension using finite-field lines]\label{lem:spread-extension}
Let $j\in\Z_{\ge1}$ and $q=2^j$. Suppose $S\subseteq V$ has
$U_S^{-1}(0)=\{0\}$ and admits a disjoint
partition into $q+1$ sets $T_0,\ldots,T_q$ satisfying
\eqref{eq:nonvanishing}.
Then there is such a support in dimension $\dim V+2j$ of size $q|S|$.
\end{lemma}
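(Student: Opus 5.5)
The plan is to take the product of the old support with the $q+1$ lines through the origin of $\F_q^2$, gluing each part $T_i$ to its own line. Identify $\F_2^{2j}$ with $\F_q^2$, where $q=2^j$, and let $L_0,\ldots,L_q$ be the $q+1$ one-dimensional $\F_q$-subspaces. Each $L_i$ is a $j$-dimensional binary subspace with exactly $q$ elements. Set
\[
 S'=\bigcup_{i=0}^q T_i\times L_i\subseteq V\oplus\F_2^{2j}.
\]
The parts $T_i$ are disjoint, so the pairs $(t,l)$ are all distinct. Hence $|S'|=\sum_i q|T_i|=q|S|$, and every coefficient of $U_{S'}$ is one. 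The ambient dimension is $\dim V+2j$. It remains to show that $U_{S'}^{-1}(0)=\{0\}$.

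For a character $(x,y)$, the sum factors blockwise:
\[
 U_{S'}(x,y)=\sum_{i=0}^q U_{T_i}(x)\sum_{l\in L_i}(-1)^{\langle l,y\rangle}
 =q\sum_{i=0}^q U_{T_i}(x)\,\ind_{\{y\in L_i^\perp\}},
\]
using orthogonality of characters on the subspace $L_i$. This is an identity in $\F_3$, where $q=2^j\ne0$. The key combinatorial input is that the annihilators $L_i^\perp$ again form a spread of $\F_2^{2j}$: they are pairwise trivially intersecting $j$-dimensional subspaces that cover all nonzero vectors.

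To verify the spread property, take $i\ne k$. Then $L_i\cap L_k=0$, so by dimension count $L_i+L_k=\F_2^{2j}$. Therefore $L_i^\perp\cap L_k^\perp=(L_i+L_k)^\perp=0$. Each $L_i^\perp$ has $q-1$ nonzero vectors, and these sets are disjoint, so together they contain $(q+1)(q-1)=q^2-1$ nonzero vectors. That is all of them.

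Now split into two cases. If $y=0$, then $y$ lies in every $L_i^\perp$, so $U_{S'}(x,0)=q\,U_S(x)$. This vanishes only at $x=0$ by the hypothesis $U_S^{-1}(0)=\{0\}$. If $y\ne0$, then $y$ lies in exactly one $L_i^\perp$, and $U_{S'}(x,y)=q\,U_{T_i}(x)$. This is nonzero for every $x$ by \eqref{eq:nonvanishing}. Hence the only zero is $(0,0)$.

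The main step needing care is the duality claim that the $L_i^\perp$ again form a spread. Once it is settled, the rest is a direct character computation.
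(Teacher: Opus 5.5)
Your proposal is correct and follows essentially the same route as the paper. Both use the same product $\bigsqcup_i T_i\times L_i$ over the $q+1$ lines of $\F_q^2$, and both show that the annihilators $L_i^\perp$ partition the nonzero dual vectors by pairwise trivial intersection plus the count $(q+1)(q-1)=q^2-1$. The concluding two-case character computation ($y=0$ versus $y\ne0$), using that $q$ is invertible in $\F_3$, is also the same.
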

\begin{proof}
Let $L_0,\ldots,L_q$ be the one-dimensional $\F_q$-subspaces of
$\F_q^2$, viewed as binary $j$-spaces, and set
\[
 S'=\bigsqcup_{i=0}^q(T_i\times L_i).
\]
The annihilators $L_i^\perp$ also partition the nonzero dual vectors.
Indeed, distinct $L_i$ span $\F_q^2$, so their annihilators meet only
at zero; counting $(q+1)(q-1)=q^2-1$ nonzero vectors proves the claim.
At a character $(x,0)$ the sum is $qU_S(x)$. At $(x,y)$ with $y\ne0$,
exactly one annihilator contains $y$, and the sum is $qU_{T_i}(x)$.
Since $q$ is invertible in $\F_3$, the unique zero is $(0,0)$.
The partition is disjoint because the $T_i$ are disjoint.
\end{proof}

\subsection{An eight-dimensional example}
Take independent binary three-spaces $X,Y$, and a binary two-space $Z$
with distinct nonzero vectors $\ell_0,\ell_1,\ell_2$. In $X\oplus Y\oplus Z$
put
\[
 S=\{0,\ell_0\}
 \sqcup\{x,x+\ell_1:0\ne x\in X\}
 \sqcup\{y,y+\ell_2:0\ne y\in Y\}.
\]
This set has thirty points, and $U_S$ vanishes only at zero: apply
Lemma~\ref{lem:spread-extension} with $q=2$ to
$X\cup Y$, partitioned as $\{0\},X\setminus0,Y\setminus0$.
The character sum of $X\cup Y$ is
$8\ind_{x=0}+8\ind_{y=0}-1$, which has precisely one zero modulo three.

The sets $Y\setminus0$ and $(Y\setminus0)+\ell_2$ are disjoint
seven-point sets in $S$ satisfying \eqref{eq:nonvanishing}. Let $V_X=X\oplus\langle\ell_1\rangle$.
For a basis $x_1,x_2,x_3$ of $X$, define
\[
 E=(\langle x_1,x_2\rangle\setminus0)
    \sqcup(\langle x_3,\ell_1\rangle\setminus0).
\]
The character sum of this six-point set vanishes only at zero in $V_X$, since it equals
$4\ind_{\xi=0}+4\ind_{\eta=0}-2$.
It contains $\ell_1$, whereas $S\cap V_X=V_X\setminus\{\ell_1\}$.
Thus $T=V_X\setminus E$ is a ten-point subset of $S$.
It satisfies \eqref{eq:nonvanishing}: at the trivial restricted character its sum is
$16-6=10$, and elsewhere its sum is $-U_E\ne0$.
The two seven-point sets and $T$ leave six singleton points. We have
therefore partitioned $S$ into nine parts, each satisfying
\eqref{eq:nonvanishing}, of sizes
\[
 10,\ 7,\ 7,\ 1,\ 1,\ 1,\ 1,\ 1,\ 1.
\]

\subsection{Refining the partition and covering all large even dimensions}
Suppose $q=2^j$, with $j\ge3$ odd, and $U_S^{-1}(0)=\{0\}$.
Assume $S$ has a partition into $q+1$ parts satisfying
\eqref{eq:nonvanishing}, including distinct parts $T_a,T_b,T_c$ of sizes
seven, seven, and ten.
The extension in Lemma~\ref{lem:spread-extension} admits partitions into
both $4q+1$ and $16q+1$ parts satisfying \eqref{eq:nonvanishing}.
The three parts $T_a\times\{0\},T_b\times\{0\},T_c\times\{0\}$
remain in each partition.

For the first transition, initially partition each product into
\[
 T_i\times\{0\},\qquad T_i\times(L_i\setminus0).
\]
These satisfy \eqref{eq:nonvanishing} because $j$ is odd, and there are
$2(q+1)$ pieces.
Choose a three-space $W\subseteq L_a$ and replace the nonzero layer of
$T_a$ by $T_a\times(W\setminus0)$ and the $q-8$ translates
$T_a\times\{z\}$ for $z\in L_a\setminus W$; this adds $q-8$ pieces.
Replace the nonzero layer of $T_c$ by its $q-1$ translates, adding
$q-2$ pieces, and split one ten-point translate into singletons,
adding nine. The total is
\[
 2(q+1)+(q-8)+(q-2)+9=4q+1.
\]

For the second transition, start with the same $2(q+1)$ pieces.
Refine $T_i\times(L_i\setminus0)$ for $i=a,b,c$ into translates, obtaining
$5q-4$ pieces. There are $2(q-1)$ seven-point translates and $q-1$
ten-point translates. Split
\[
 N_7=\frac{11q-4}{6}
\]
of the seven-point translates and one ten-point translate into
singletons. Here $N_7$ is an integer because $q\equiv2\pmod6$, and
$0\le N_7\le2(q-1)$ because $q\ge8$. The resulting count is
\[
 5q-4+6N_7+9=16q+1.
\]
In both cases, $T_a\times\{0\},T_b\times\{0\},T_c\times\{0\}$
remain intact.
Every operation preserves \eqref{eq:nonvanishing} for each part and
preserves disjointness.

Starting with the eight-dimensional example and $j_0=3$, the next extension
exponent can consequently be chosen as $j_i+2$ or $j_i+4$.
After $k\ge1$ extensions the attainable half-dimensions are exactly
\[
 \left\{M:k^2+2k+4\le M\le2k^2+k+4,\quad M\equiv k\pmod2\right\}.
\]
Indeed,
\[
 M=k^2+2k+4+
 2\sum_{i=0}^{k-2}(k-1-i)\epsilon_i,\qquad \epsilon_i\in\{0,1\},
\]
and the subset sums of $1,\ldots,k-1$ fill the interval from zero to
$k(k-1)/2$. The step-two intervals for $k$ and $k+2$ meet once
\[
 (2k^2+k+4)+2-\bigl((k+2)^2+2(k+2)+4\bigr)
 =(k-6)(k+1)\ge0.
\]
They cover every even $M\ge52$ and every odd $M\ge67$, hence every
$M\ge66$. Each extension multiplies the support size by $2^j$ and
increases the half-dimension by $j$. The final size is therefore
$30\cdot2^{M-4}=15\cdot2^{M-3}$.
This proves Theorem~\ref{thm:even-construction}.

\subsection{Proof of the large-rank formula}
We first record the monotonicity used to pass from feasibility of a
dimension to feasibility of the preceding one.
For $d\ge2$, choose $f$ attaining $\nu(d)$. Then $f^2=1-\delta_0$.
Every nonconstant Fourier coefficient of this square equals
$-2^{-d}\ne0$, so the support of $\widehat f$ contains a pair differing
by any prescribed nonzero direction $h$.
Upon restriction to $h^\perp$, the coefficients indexed by $u$ and
$u+h$ are replaced by their sum.
At least one pair of nonzero coefficients is thereby replaced by
at most one nonzero coefficient. The restricted function still has
zero set $\{0\}$. Hence
\[
 \nu(d-1)\le\nu(d)-1,\qquad
 t_{d-1}\le t_d,\qquad t_d=\nu(d)-d-1.
\]

Put
\[
 A_m=\frac{15}{8}2^m-2m-1.
\]
The sequence $A_m$ is strictly increasing for $m\ge5$.
For integer $r\ge A_{66}$ the unique integer $m\ge66$ with
$A_m\le r<A_{m+1}$ is
\[
 m=\floor{\log_2\!\left(\frac8{15}(r+2\log_2r)\right)}.
\]
Here is a direct check of the rounding. The function
$r+2\log_2r$ is increasing. For $m\ge5$,
$A_m\ge(49/32)2^m>\sqrt2\,2^m$, so
$A_m+2\log_2A_m\ge(15/8)2^m$.
On the other side, $A_m-1<2^{m+1}$ implies
\[
 A_m-1+2\log_2(A_m-1)<(15/8)2^m.
\]
These two inequalities identify every integer boundary exactly.

Theorem~\ref{thm:even-construction} makes dimension $2m$ feasible in
\eqref{eq:C3transfer}. Dimension $2m+3$ is impossible: Corollary~\ref{cor:odd-support}, with
$s=m+1$, gives
\[
 \nu(2m+3)>\frac8{\sqrt3}2^m,
\]
whereas integer $r<A_{m+1}$ implies
$r+2m+4\le(15/4)2^m$.
Monotonicity excludes all higher dimensions.
The feasible maximum thus belongs to $\{2m,2m+1,2m+2\}$, and the last
dimension is feasible only if the preceding one is feasible.
Consequently
\[
 \D(C_3\oplus C_2^r)
 =r+2m+1
 +\ind_{\{\nu(2m+1)\le r+2m+2\}}
 +\ind_{\{\nu(2m+2)\le r+2m+3\}}.
\]
This proves Theorem~\ref{thm:large-rank}.
The two indicators retain the unevaluated extremal thresholds; the
formula is an exact structural identity, and the construction does not
assert equality in $\nu(2M)\le15\cdot2^{M-3}$.

\end{document}